\def\subsection{\@startsection{subsection}{2}%
  \z@{.5\linespacing\@plus.7\linespacing}{.3\linespacing}%
  {\normalfont\bfseries}}
\newtheorem{thm}{Theorem}[section]
\newtheorem{cor}[thm]{Corollary}
\newtheorem{lem}[thm]{Lemma}
\newtheorem{prop}[thm]{Proposition}
\theoremstyle{definition}
\newtheorem{ex}[thm]{Example}
\theoremstyle{definition}
\newtheorem{defn}[thm]{Definition}
\theoremstyle{definition}
\newtheorem{rem}[thm]{Remark}
\theoremstyle{definition}
\def\Q{\mathbb Q}
\def\C{\mathbb C}
\def\R{\mathbb R}
\def\K{\mathbb K}
\def\Z{\mathbb Z}
\def\dim{\operatorname{dim}}
\def\supp{\operatorname{supp}}
\def\ord {\mathrm{ord}}
\def\O{\mathcal O}
\def\imag{\mathbf i}
\def\p{\operatorname{p}}
\def\v{\mathbf v}
\def\g{{g}}
\def\B{\mathcal B}
\def\G{\mathbf G}
\def\m{\mathbf m}
\def\F{\mathscr F}
\def\V{\mathbf V}
\def\sL{\textnormal{\texttt L}}
\def\LL{\mathcal L}
\def\k{\mathbf k}
\def\geq{\geqslant}
\def\leq{\leqslant}
\def\*{{\color{red}\blacksquare}}
\def\+{\color{green}\blacksquare}
\subjclass[$2000$ Mathematics Subject Classification]{Primary
13H15; Secondary 13B22, 32S05}
\begin{document}



\title[The sequence of mixed \L ojasiewicz exponents]{The sequence of mixed \L ojasiewicz exponents
\\ \vskip4pt associated to pairs of ideals}



\author{Carles Bivi\`a-Ausina}
\address{
Institut Universitari de Matem\`atica Pura i Aplicada,
Universitat Polit\`ecnica de Val\`encia,
Cam\'i de Vera, s/n,
46022 Val\`encia,
Spain}
\email{carbivia@mat.upv.es}

\keywords{\L ojasiewicz exponents, integral closure of ideals, mixed multiplicities of ideals, monomial ideals, Newton polyhedra}

\thanks{The author was partially supported by DGICYT Grant MTM2015-64013-P}

\begin{abstract}
We analyze the sequence $\LL^*_J(I)$ of mixed \L ojasiewicz exponents
attached to any pair $I,J$ of monomial ideals of finite colength of the ring of analytic function germs $(\C^n,0)\to \C$.
In particular, we obtain a combinatorial expression for this sequence when $J$ is diagonal. We also show
several relations of $\LL^*_J(I)$ with other numerical invariants associated to $I$ and $J$.
\end{abstract}

\maketitle


\section{Introduction}\label{intro}



The multiplicity and the \L ojasiewicz exponent of ideals of finite colength in a Noetherian local ring
are fundamental numerical invariants that have numerous applications in commutative algebra, algebraic geometry and singularity theory
(see for instance \cite{LT, Ploski, Cargese}).

The notion of multiplicity of ideals in a Noetherian local ring was extended to sequences of ideals $(I_1,\dots, I_n)$ of finite colength by
Risler and Teissier in \cite{Cargese}. This notion was further developed by Rees in his article \cite{Rees2}, where
he also introduced the fundamental notion of joint reduction.
Moreover, Swanson gave in \cite{Swanson} a version of the Rees' multiplicity theorem for mixed multiplicities.

\L ojasiewicz exponents were initially introduced in the context of complex analytic geometry.
Due to the fundamental work of Lejeune and Teissier \cite{LT}, \L ojasiewicz exponents admit an equivalent formulation in terms of the
notion of integral closure of ideals. Consequently, these numbers have a translation in terms of multiplicities of ideals, by virtue of the Rees'
multiplicity theorem (see relation \ref{LJImult}).

Let $(R,\m)$ be a Noetherian local ring of dimension $n$. In \cite{BiviaMRL} we considered an extension of
 the notion of mixed multiplicity of $n$
ideals of finite colength of $R$ to certain sequences of ideals
$(I_1,\dots, I_n)$ that are not assumed to have finite colength. We call this number the Rees' multiplicity of $(I_1,\dots, I_n)$
and we denote it by $\sigma(I_1,\dots, I_n)$. Analogous to the idea of extending the notion of Samuel multiplicity of ideals to sequences
of $n$ ideals in a ring of dimension $n$, in \cite{BiviaMZ2} we started the task of developing a similar idea for \L ojasiewicz exponents
of ideals. Hence, if $(I_1,\dots, I_n)$ is a sequence of ideals of $R$ for which
$\sigma(I_1,\dots,I_n)<\infty$ and if $J$ is a proper ideal of $R$, then we introduced the notion of mixed
\L ojasiewicz exponent of $(I_1,\dots, I_n)$ with respect to $J$ (see Definition \ref{mixedLoj}). This number is denoted by $\LL_J(I_1,\dots, I_n)$.

Let $\O_n$ be the ring of analytic function germs $(\C^n,0)\to \C$ and let $\m_n$ be the maximal ideal of $\O_n$.
In \cite{BiviaMZ2} we addressed the problem of finding an effective
procedure to compute $\LL_J(I_1,\dots, I_n)$ in the case where $R=\O_n$, the ideals $I_1,\dots, I_n$ are generated by monomials
and $J=\m_n$. In \cite{BE1, BENewton} we considered the problem of determining $\LL_J(I_1,\dots, I_n)$ for ideals
in $\O_n$ with the aid of a fixed Newton filtration.


By \cite[Corollary 3.8]{BF1}, the following relation between multiplicities and \L ojasiewicz exponents holds:
\begin{equation}\label{eIeJLJI}
\frac{e(I)}{e(J)}\leq \LL_J^{(1)}(I)\cdots \LL_J^{(n)}(I),
\end{equation}
where $I$ and $J$ are ideals of finite colength of $R$
and $\LL_J^{(i)}(I)=\LL_J(I,\dots, I, J,\dots, J)$,
with $I$ repeated $i$ times and $J$ repeated $n-i$ times.
We denote the vector $(\LL_J^{(n)}(I),\dots, \LL_J^{(1)}(I))$ by $\LL_J^*(I)$.
In \cite{Hickel}, Hickel proved inequality (\ref{eIeJLJI}), by using different techniques,
in the case where $R$ is an equicharacteristic regular local ring and
$J$ equals the maximal ideal. In this context, he also characterized the class of ideals
$I$ for which equality holds in (\ref{eIeJLJI}) when $n=2$ (see \cite[Proposition  5.1]{Hickel}).

Let us denote $\LL^{(i)}_{\m_n}(I)$ by $\LL^{(i)}_{0}(I)$, for all $i=1,\dots, n$.
In \cite[Theorem 3.5]{BiviaBAMS} we
proved that, if $I$ is a monomial ideal of $\O_n$, then $e(I)=\LL_0^{(1)}(I)\cdots \LL_0^{(n)}(I)$ if and only if
there exist homogeneous polynomials $g_1,\dots, g_n\in\C[x_1,\dots, x_n]$ such that $\overline{I}=\overline{\langle g_1,\dots, g_n\rangle}$.
That is, we characterized the equality in (\ref{eIeJLJI}) when $I$ is a monomial ideal and $J=\m_n$.


Let $I$ and $J$ be monomial ideals of $\O_n$ of finite colength.
The main purpose of this article is to compute the sequence $\LL^*_J(I)$
in terms of the combinatorial information supplied by the respective Newton polyhedra of $I$ and $J$.
We have obtained an upper bound for each $\LL_J^{(i)}(I)$ which becomes an equality when $J$ is diagonal,
that is, when $J$ is of the form $J=\overline{\langle x_1^{a_1},\dots, x_n^{a_n}\rangle}$, for some $a_1,\dots, a_n\in\Z_{\geq 1}$,
where the bar denotes integral closure. The results of the article are mainly motivated by the problem of characterizing the equality in (\ref{eIeJLJI}),
by \cite[Theorem 5.5]{BiviaeIeJ} and the results of \cite{Hickel}.
Next we describe more precisely the structure and contents of the article.


In Section \ref{preliminars} we recall some notions and results needed to expose our work.
Hence we recall basic notions like Newton filtration, $J$-non-degenerate sequence of ideals and $J$-non-degenerate map,
where $J$ denotes a fixed monomial ideal of finite colength of $\O_n$ (these notions generalize the notion of semi
weighted-homogeneous map $(\C^n,0)\to (\C^n,0)$).
Section \ref{mixedL} is devoted to developing general results about the sequence $\LL^*_J(I)$, when $I$ and $J$ are arbitrary
ideals of a Noetherian local ring $R$. In particular, in Proposition \ref{decreix} we prove that $\LL^*_J(I)$ forms a decreasing sequence provided that $I\subseteq \overline J$. We also introduce and characterize the class of {\it Hickel ideals with respect to $J$} (see Corollary \ref{HickelwrtJ}).

In Section \ref{Lstar} we explore the sequence $\LL^*_J(I)$ when the ideal $I$ is generated by the components of a $J$-non-degenerate map
$(\C^n,0)\to (\C^n,0)$ and $J$ is a monomial ideal of $\O_n$ of finite colength.
We obtain an expression for this sequence when $I\subseteq \overline J$ (see Theorem \ref{LstarJnodeg})
and derive a characterization of $J$-non-degenerate maps in terms of $\LL_J^*(I)$ (see Corollary \ref{Calp}).

Let $I$ and $J$ be arbitrary monomial ideals of $\O_n$ of finite colength. The main result of
Section \ref{Lstarmonomial} is Theorem \ref{essencial},
where we show how to obtain an upper bound for the elements of the sequence
$\LL_J^*(I)$ from any $J$-non-degenerate sequence
of ideals contained in $I$.

Section \ref{applications} is devoted to showing two applications of Theorem \ref{essencial}. In \cite{BiviaeIeJ}
we constructed a particular $J$-non-degenerate sequence $(K_1,\dots K_n)$ of ideals contained in $I$. So we apply
this special sequence of ideals to derive an upper bound for the numbers $\LL_J^{(i)}(I)$ in
terms of the Newton filtration of $J$ and the intersection with $\Gamma_+(I)$
of the half rays determined by the vertices of $\Gamma_+(J)$.
The second application deals with the case where $J$ is diagonal. In this case we show that the mentioned upper bounds actually coincide with the numbers
$\LL_J^{(i)}(I)$.
As a corollary, we obtain that if $J$ is diagonal, then
equality holds in (\ref{eIeJLJI}) if and only if there exists some $s\geq 1$ such that
$\overline{I^s}=\overline{\langle g_1,\dots, g_n\rangle}$, where $(g_1,\dots, g_n)$ is $J$-non-degenerate
(see Corollary \ref{corolfinalqh}). 



\section{Preliminary concepts}\label{preliminars}

\subsection{Newton filtrations}
In this section we show some combinatorial definitions that we need in order to expose our results. These definitions
already appear in \cite[Section 4]{BiviaeIeJ} and \cite[Section 3]{BENewton}. For the sake of completeness we include some of them also here.

Let $\O_n$ denote the ring of analytic function germs $(\C^n,0)\to \C$.
Let us fix coordinates $(x_1,\dots, x_n)$ in $\C^n$. If $k\in\Z^n_{\geq 0}$, then we denote the monomial
$x_1^{k_1}\cdots x_n^{k_n}$ by $x^k$. We say that a proper ideal $I$ of $\O_n$ is {\it monomial} when $I$ admits a generating system formed by monomials.
Let $h\in\O_n$ and let $h=\sum_ka_kx^k$ be the Taylor expansion of $h$ around the origin. The
{\it support of $h$}, denoted by $\supp(h)$, is the set $\{k\in \Z^n_{\geq 0}: a_k\neq 0\}$.
If $\Delta$ is any subset of $\R^n_{\geq 0}$, then we denote by $h_\Delta$ the sum of those terms $a_kx^k$ such that $k\in\supp(h)\cap \Delta$.
We set $h_\Delta=0$ whenever $\supp(h)\cap \Delta=\emptyset$.
Given an ideal $I$ of $\O_n$, the {\it support of $I$}, denoted by $\supp(I)$,
is defined as the union of the supports of the elements of $I$.

If $A\subseteq \Z^n_{\geq 0}$, $A\neq\emptyset$, then the {\it Newton polyhedron determined by $A$} is the set
$\Gamma_+(A)$ obtained as the convex hull of $\{k+v: k\in A,\, v\in\R^n_{\geq 0}\}$.
If $\Gamma_+$ is a subset of $\R^n_{\geq 0}$ such that $\Gamma_+=\Gamma_+(A)$, for some $A\subseteq \Z^n_{\geq 0}$, then we will say that
$\Gamma_+$ is a {\it Newton polyhedron}.

Given an element $h\in\O_n$, $h\neq 0$, the {\it Newton polyhedron of $h$}
is $\Gamma_+(h)=\Gamma_+(\supp(h))$. If $h=0$, then we set $\Gamma_+(h)=\emptyset$.
Analogously, given a non-zero ideal $I\subseteq \O_n$, the {\it Newton polyhedron of $I$}, is defined as
$\Gamma_+(I)=\Gamma_+(\supp(I))$. It is known that if $I$ is a monomial ideal of $\O_n$, then
the integral closure of $I$ is generated by those monomials $x^k$ such that $k\in\Gamma_+(I)$ (see for instance \cite[Proposition 1.4.6]{HS}
or \cite[Proposition 3.4]{T2}).

Let us fix a Newton polyhedron $\Gamma_+\subseteq\R^n$. We say that $\Gamma_+$ is {\it convenient} when $\Gamma_+$ meets each
coordinate axis in a point different from the origin. In particular, if $J$ is a proper ideal of $\O_n$ of finite colength, then
$\Gamma_+(J)$ is convenient.

If $v\in\R^n_{\geq 0}$, then we define $\ell(v,\Gamma_+)=\min\{\langle v,k\rangle: k\in\Gamma_+\}$, where $\langle\,,\rangle$
denotes the standard scalar product in $\R^n$. We also set $\Delta(v,\Gamma_+)=\{k\in\Gamma_+: \langle v,k\rangle=\ell(v,\Gamma_+)\}$.
We say that a subset $\Delta\subseteq \Gamma_+$ is a {\it face} of $\Gamma_+$ when there exists some $v\in\R^n_{\geq 0}$ such that
$\Delta(v,\Gamma_+)=\Delta$. In this case we say that $v$ {\it supports} $\Delta$. If $\Delta$ is a face of $\Gamma_+$, then the {\it dimension of $\Delta$} is the minimum of the dimensions
of the affine subspaces of $\R^n$ containing $\Delta$. The faces of $\Gamma_+$ of dimension $0$ or $n-1$ will be called
{\it vertices} or {\it facets}, respectively. We denote by $\v(\Gamma_+)$ the set of vertices of $\Gamma_+$.
 The union of all compact faces of $\Gamma_+$ will be denoted by
$\Gamma$ and we will refer to this subset of $\Gamma_+$ as the {\it Newton boundary of $\Gamma_+$}.

We say that a given vector $v\in\Z^n_{\geq 0}$, $v\neq 0$, is {\it primitive}, when the non-zero components of $v$ are mutually prime
integers. We denote by $\F(\Gamma_+)$ the set of primitive vectors of $\Z^n_{\geq 0}$ supporting some facet of $\Gamma_+$.
Let us denote by $\F_c(\Gamma_+)$ the set of those $v\in\F(\Gamma_+)$ such that $\Delta(v,\Gamma_+)$
is compact and by $\F'(\Gamma_+)$ the set of those $v\in\F(\Gamma_+)$ such that $\ell(v,\Gamma_+)>0$. Obviously
$\F_c(\Gamma_+)\subseteq \F'(\Gamma_+)$ and equality holds if $\Gamma_+$ is convenient.

Let us suppose that $\F'(\Gamma_+)=\{v^1, \dots, v^s\}$, for some primitive vectors $v^1,\dots, v^s\in\Z^n_{\geq 0}$, $s\geq 1$.
Let $M_\Gamma$ denote the least common multiple of $\{\ell(v^1,\Gamma_+),\dots, \ell(v^s,\Gamma_+)\}$. We define the
{\it filtrating map associated to $\Gamma_+$} as the map
$\phi_\Gamma:\R^n_{\geq 0}\to \R_{\geq 0}$ given by
$$
\phi_\Gamma(k)=\min\bigg\{\frac{M_\Gamma}{\ell(v^i,\Gamma_+)}\langle
k, v^i\rangle: i=1,\dots, s\bigg\},\quad \textnormal{for all
$k\in\R^n_{\geq 0}$}.
$$


If $\Delta$ is any subset of $\R^n$, then we denote by $C(\Delta)$ the {\it cone
over $\Delta$}, that is, the union of all half-lines emanating from the origin and passing through some point of $\Delta$.
It is easy to check that $\phi_\Gamma(\Z^n_{\geq 0})\subseteq\Z^n_{\geq 0}$,
$\phi_\Gamma(k)=M_\Gamma$, for all $k\in\Gamma$, and the map $\phi_\Gamma$ is linear on each
cone $C(\Delta)$, where $\Delta$ is any compact face of $\Gamma_+$.
Therefore, we define the map $\nu_\Gamma:\O_n\to \R_{\geq 0}\cup \{+\infty\}$ by
$
\nu_\Gamma(h)=\min\big\{\phi_\Gamma(k): k\in\supp(h)\big\}
$
for any $h\in\O_n$, where we set $\nu_\Gamma(0)=+\infty$.

For any $r\in \Z_{\geq 0}$, let us consider the ideal
\begin{equation}\label{elsBr}
\B_r=\big\{h\in\O_n: \nu_\Gamma(h)\geq r\big\}\cup\{0\}.
\end{equation}
Obviously $\B_{r+1}\subseteq \B_{r}$, for all $r\in\Z_{\geq 0}$.
Thus $\{\B_r\}_{r\geq 0}$ is a decreasing sequence of ideals.
We will indistinctly refer to the map $\nu_\Gamma$ or to the family of ideals $\{\B_r\}_{r\geq 0}$ as the {\it
Newton filtration} induced by $\Gamma_+$ (see also \cite{BFS, K}). This notion generalizes the notion of weighted homogeneous filtration of $\O_n$.


\subsection{Mixed multiplicities and $J$-non-degeneracy of sequences of ideals}\label{M&NF}

Along this section we will suppose that $(R,\m)$ is a Noetherian local ring of dimension $n$.
If $I$ is an ideal of $R$, then we denote by $\overline I$ the integral closure of $I$ and, if $I$ has finite colength, then
$e(I)$ will denote the Samuel multiplicity of $I$ (see \cite{HS, Matsumura,V}).

Given $g_1,\dots, g_r\in R$, if these elements generate an ideal of finite colength of $R$, then we will also write
$e(g_1,\dots, g_r)$ instead of $e(\langle g_1,\dots, g_r\rangle)$.

If $I_1,\dots, I_n$ are ideals of $R$ of finite colength,
then we denote by $e(I_1,\dots, I_n)$ the mixed multiplicity
of $I_1,\dots, I_n$ defined by Teissier and Risler in \cite[\S 2]{Cargese}.
We also refer to \cite[\S 17.4]{HS}, \cite{Rees2} or \cite{Swanson} for the definition and fundamental properties of mixed multiplicities of ideals.


Given two ideals $I$ and $J$ of $R$ of finite colength and an integer
$i\in \{1,\dots, n\}$, we define
\begin{equation}\label{eijIJ}
e_i(I,J)=e(I,\dots, I, J,\dots, J),
\end{equation}
where $I$ is repeated $i$ times and $J$ is repeated $n-i$ times.

\begin{defn}\label{lasigma}\cite{BiviaMRL} Let $I_1,\dots,
I_n$ be ideals of $R$. If the set of natural numbers $\{e(I_1+\m^r,\dots,
I_n+\m^r): r\in\Z_{\geq 1}\}$ is bounded, then we define the {\it Rees' mixed multiplicity
of $I_1,\dots, I_n$} as
\begin{equation}\label{sigma}
\sigma(I_1,\dots, I_n)=\max_{r\in\Z_{\geq 1}}\, e(I_1+\m^r,\dots,
I_n+\m^r).
\end{equation}
Otherwise, we set $\sigma(I_1,\dots, I_n)=\infty$.
\end{defn}

Let us suppose that the residue field $\k=R/\m$ is
infinite. Let $I_1,\dots, I_r$ be proper ideals of $R$ and let us
fix a generating system $a_{i1},\dots, a_{is_i}$ of $I_i$,
for all $i=1,\dots, r$. Let $s=s_1+\cdots +s_r$. We say that a given
property holds for {\it sufficiently general elements of
$I_1\oplus \cdots \oplus I_r$} if there exists a non-empty
Zariski-open set $U$ in $\k^s$ such that all elements $(g_1,\dots,
g_r)\in I_1\oplus \cdots \oplus I_r$ satisfy the said property provided that
\begin{enumerate}
\item[(a)] for all $i=1,\dots, r$: $g_i=\sum_{j}u_{ij}a_{ij}$, where $u_{ij}\in R$, for all $j=1,\dots, s_i$, and
\item[(b)] the image of $(u_{11}, \dots,u_{1s_1},\dots, u_{r1}, \dots, u_{rs_r})$ in $\k^s$ belongs to $U$.
\end{enumerate}

\begin{prop}\label{Prop2.9}\textnormal{\cite[2.9]{BiviaMRL}}
Let $(R,\m)$ be a Noetherian local ring of dimension $n$ with infinite residue field.
Let $I_1,\dots, I_n$ be proper ideals of $R$. Then, $\sigma(I_1,\dots, I_n)<\infty$ if and only if
there exist elements $g_i\in I_i$, for $i=1,\dots, n$, such that
$\langle g_1,\dots, g_n\rangle$ has finite colength. If $\sigma(I_1,\dots, I_n)<\infty$, then
$\sigma(I_1,\dots, I_n)=e(g_1,\dots, g_n)$ for
sufficiently general elements $(g_1,\dots, g_n)\in I_1\oplus
\cdots \oplus I_n$.
\end{prop}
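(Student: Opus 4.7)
Suppose $g_i\in I_i$ are such that $\langle g_1,\dots,g_n\rangle$ has finite colength. Since $\m^r\subseteq I_i+\m^r$, each $I_i+\m^r$ is $\m$-primary and the mixed multiplicities in~(\ref{sigma}) are well defined. As $g_i\in I_i\subseteq I_i+\m^r$ and $\langle g_1,\dots,g_n\rangle$ is $\m$-primary, the tuple $(g_1,\dots,g_n)$ is a joint parameter system for $(I_1+\m^r,\dots,I_n+\m^r)$. The classical inequality
\[
e(I_1+\m^r,\dots,I_n+\m^r)\leq e(g_1,\dots,g_n),
\]
valid for any joint parameter system, then holds for every $r\geq 1$, so $\sigma(I_1,\dots,I_n)\leq e(g_1,\dots,g_n)<\infty$.

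\textbf{Hard direction.} Assume $\sigma(I_1,\dots,I_n)<\infty$. I first show that $I_1+\cdots+I_n$ must be $\m$-primary. If not, $\sqrt{I_1+\cdots+I_n}\subsetneq\m$, so there is a prime $P\subsetneq\m$ containing every $I_i$. Then $I_i+\m^r\subseteq P+\m^r$ for every $i,r$, and monotonicity of mixed multiplicities under inclusion gives
\[
e(I_1+\m^r,\dots,I_n+\m^r)\geq e(P+\m^r,\dots,P+\m^r)=e(P+\m^r),
\]
the Samuel multiplicity of the $\m$-primary ideal $P+\m^r$. Since $\dim R/P\geq 1$, the length of $R/(P+\m^r)$ grows polynomially of positive degree in $r$, whence $e(P+\m^r)\to\infty$, contradicting $\sigma<\infty$. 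Once $I_1+\cdots+I_n$ is $\m$-primary, a standard inductive prime-avoidance argument (valid because $\k$ is infinite) produces $g_i\in I_i$ with $\langle g_1,\dots,g_n\rangle$ of finite colength: at each step $i$, the condition $\sqrt{I_1+\cdots+I_n}=\m$ guarantees that no minimal prime of $\langle g_1,\dots,g_{i-1}\rangle$ of dimension $>n-i$ can contain $I_i$ (else such a prime would contain $I_1+\cdots+I_n$), so a general element $g_i\in I_i$ can be selected to drop dimension by one, yielding $\dim R/\langle g_1,\dots,g_n\rangle=0$.

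\textbf{Multiplicity formula.} The locus $V\subseteq\k^{s_1+\cdots+s_n}$ of coefficients $(u_{ij})$ producing an $\m$-primary ideal $\langle g_1,\dots,g_n\rangle$ is Zariski-open and, by the previous step, non-empty. On $V$ the easy direction yields $e(g_1,\dots,g_n)\geq\sigma$. For the reverse inequality, fix $r$ large enough that $e(I_1+\m^r,\dots,I_n+\m^r)=\sigma$, and apply Risler--Teissier to find generic $(g_1^{(r)},\dots,g_n^{(r)})\in\bigoplus_i(I_i+\m^r)$ with $e(g^{(r)})=\sigma$. Decompose $g_i^{(r)}=f_i+h_i$ with $f_i\in I_i$ and $h_i\in\m^r$. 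Since $\langle g^{(r)}\rangle$ is $\m$-primary with Samuel multiplicity $\sigma$, its index of nilpotence $c_0$ (the least $c$ with $\m^c\subseteq\langle g^{(r)}\rangle$) admits a bound depending only on $\sigma$ and $\dim R$ via a Lech-type length estimate. Choosing $r>c_0$ gives $h_i\in\m^r\subseteq\m\cdot\langle g^{(r)}\rangle$, and Nakayama's lemma yields $\langle f_1,\dots,f_n\rangle=\langle g^{(r)}\rangle$. Thus $f=(f_1,\dots,f_n)\in V$ satisfies $e(f)=\sigma$, and upper semicontinuity of Samuel multiplicity forces $e=\sigma$ on a Zariski-dense open subset of $V$, giving the sufficiently-general statement. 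The main obstacles are the lower bound $e(P+\m^r)\to\infty$ for primes $P\subsetneq\m$ of positive dimension and, in the final step, the uniform control of the index of nilpotence needed to invoke Nakayama; both rely on careful Hilbert--Samuel estimates, while the infinite-residue-field hypothesis is essential to the inductive prime-avoidance argument.
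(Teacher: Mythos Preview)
The paper does not prove this proposition; it is cited from \cite{BiviaMRL} and used as a black box. There is thus no argument here to compare against, but your attempt contains a genuine error in the hard direction.

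You argue that $\sqrt{I_1+\cdots+I_n}=\m$ suffices for an inductive prime-avoidance construction of $g_i\in I_i$ with $\langle g_1,\dots,g_n\rangle$ $\m$-primary. This is false: take $R=k[[x,y,z]]$, $I_1=I_2=(x)$, $I_3=(y,z)$. Then $I_1+I_2+I_3=\m$, yet any $g_1,g_2\in(x)$ and $g_3\in(y,z)$ give $\langle g_1,g_2,g_3\rangle\subseteq(x,g_3)$, an ideal of height at most $2$. Your parenthetical justification --- that a minimal prime of $\langle g_1,\dots,g_{i-1}\rangle$ containing $I_i$ must contain $I_1+\cdots+I_n$ --- is simply wrong: such a prime contains only the specific \emph{elements} $g_1,\dots,g_{i-1}$, not the ideals $I_j$ for $j\neq i$. (One checks that $\sigma=\infty$ in this example, so the proposition itself is not contradicted; only your intermediate claim fails.) There are further soft spots: your argument that $e(P+\m^r)\to\infty$ conflates colength with Samuel multiplicity, and your asserted uniform bound on the index of nilpotence $c_0$ in terms of $\sigma$ and $\dim R$ fails in non-equidimensional rings --- in $R=k[[x,y,z,w]]/(xy,xz,xw)$ the parameter ideals $(y,z,w+x^r)$ all have multiplicity $1$ but index of nilpotence $r+1$. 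Even if the original result in \cite{BiviaMRL} carries extra hypotheses ruling out such rings, the prime-avoidance gap above persists in the regular case and must be repaired.
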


We remark that the case of Proposition \ref{Prop2.9} where $I_1,\dots, I_n$ have finite colength
follows as a consequence of the theorem of existence of joint reductions (see \cite[p.\,336]{HS} or \cite[p.\,4]{Swanson}).

Along the rest of this section, we will suppose that $J$ is a monomial ideal of $\O_n$ of finite colength.
We denote by $\nu_J$ the Newton filtration induced by $\Gamma_+(J)$ and by
$\phi_J$ the corresponding filtrating map. Let us also set $M_J=M_{\Gamma(J)}$, where $\Gamma(J)$
denotes the Newton boundary of $\Gamma_+(J)$.
If $I$ is a non-zero ideal of $\O_n$, then we define $\nu_J(I)=\min\{\nu_J(h): h\in I\}$.

For instance, if $J=\m_n$, then $\phi_{\m_n}(k)=\vert k\vert$, for all $k\in\R^n_{\geq 0}$, where $\vert (k_1,\dots, k_n)\vert=
k_1+\cdots+k_n$, for any $(k_1,\dots, k_n)\in \R^n_{\geq 0}$.

Let $(I_1,\dots, I_n)$ be an $n$-tuple of proper ideals such that
$\sigma(I_1,\dots, I_n)<\infty$. In general, we have that
\begin{equation}\label{desref}
\sigma(I_1,\dots, I_n)\geq \frac{\nu_J(I_1)\cdots\nu_J(I_n)}{M_J^n}e(J)
\end{equation}
(see \cite[Proposition 3.2]{BENewton}).
We say that $(I_1,\dots, I_n)$ is {\it $J$-non-degenerate} when equality holds in (\ref{desref})
(see \cite[Definition 3.3]{BENewton}).

Let $g=(g_1,\dots, g_n):(\C^n,0)\to (\C^n,0)$ be an analytic map germ.
We say that $g$ is {\it $J$-non-degenerate} when the $n$-tuple of ideals $(\langle g_1\rangle,\dots, \langle g_n\rangle)$ is $J$-non-degenerate.
That is, when
\begin{equation}\label{gJnodeg}
e(g_1,\dots, g_n)=\frac{\nu_J(g_1)\cdots \nu_J(g_n)}{M_J^n}e(J).
\end{equation}
In Theorem \ref{thm3.3BFS} we recall a characterization of this class of maps given in \cite{BFS}.

Let $h\in\O_n$, $h\neq 0$, and suppose that $h=\sum_ka_kx^k$ is the Taylor expansion of $h$
around the origin. If $\Delta$ is a compact face of $\Gamma_+(J)$, then we denote by $\p_{J, \Delta}(h)$ the sum of all terms $a_kx^k$
such that $k\in C(\Delta)$ and $\nu_J(x^k)=\nu_J(h)$. If no such terms exist, then we set $\p_{J, \Delta}(h)=0$.

\begin{thm}\cite[3.3]{BFS}\label{thm3.3BFS}
Let $g=(g_1,\dots, g_n):(\C^n,0)\to (\C^n,0)$ be an analytic map germ such that $g^{-1}(0)=\{0\}$.
Then, the following conditions are equivalent:
\begin{enumerate}
\item[(a)] $g$ is $J$-non-degenerate;
\item[(b)] the set germ at $0$ of common zeros of $\p_{J,\Delta}(g_1),\dots, \p_{J,\Delta}(g_n)$
is contained in $\{x\in\C^n: x_1\cdots x_n=0\}$, for all compact faces $\Delta$ of $\Gamma_+(J)$.
\end{enumerate}
\end{thm}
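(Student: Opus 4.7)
My plan is to transform both conditions into statements about weighted initial forms with respect to the primitive vectors in $\F_c(\Gamma_+(J))$, and then to compare $e(g_1,\dots,g_n)$ with the expected value $\nu_J(g_1)\cdots\nu_J(g_n)e(J)/M_J^n$ via a combination of a curve-selection estimate and a toric deformation. Since $\Gamma_+(J)$ is convenient we have $\F_c(\Gamma_+(J))=\F'(\Gamma_+(J))$, and for every $v$ in this set, the restriction of $\phi_J$ to the cone $C(\Delta(v,\Gamma_+(J)))$ coincides with $(M_J/\ell(v,\Gamma_+(J)))\langle v,\cdot\rangle$. Consequently $\p_{J,\Delta}(g_i)$ is, up to a positive scalar, the $v$-initial form $\ini_v(g_i)$ whenever $\Delta=\Delta(v,\Gamma_+(J))$; moreover, every compact face of $\Gamma_+(J)$ is an intersection of compact facets, so~(b) is equivalent to the assertion that, for every $v\in\F_c(\Gamma_+(J))$, the forms $\ini_v(g_1),\dots,\ini_v(g_n)$ have no common zero in $(\C^*)^n$.

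For the implication (a)$\Rightarrow$(b) I would proceed by contradiction. Assume that some $v\in\F_c(\Gamma_+(J))$ supports a compact face $\Delta$ admitting a common zero $x^*\in(\C^*)^n$ of all the $\p_{J,\Delta}(g_i)$. Consider the analytic arc $\gamma(t)=(t^{v_1}x_1^*,\dots,t^{v_n}x_n^*)$, whose coordinates do not vanish for $t\neq 0$. A straightforward expansion shows that the $t$-adic order of $g_i\circ\gamma$ equals $\ell(v,\Gamma_+(J))\nu_J(g_i)/M_J$ plus the contribution of the principal part evaluated at $x^*$; by hypothesis this contribution vanishes, so $\ord_t(g_i\circ\gamma)$ strictly exceeds $\ell(v,\Gamma_+(J))\nu_J(g_i)/M_J$ for every $i$. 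The arc $\gamma$ then witnesses, through the valuative description of integral closure of monomial ideals and the inequality (\ref{desref}) from \cite[Proposition 3.2]{BENewton}, that equality in~(\ref{desref}) cannot hold, which contradicts~(a).

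For the converse (b)$\Rightarrow$(a), I would first invoke Proposition~\ref{Prop2.9} applied to the principal ideals $\langle g_1\rangle,\dots,\langle g_n\rangle$, so that $e(g_1,\dots,g_n)$ is realised as $\sigma(\langle g_1\rangle,\dots,\langle g_n\rangle)$. For each $v\in\F_c(\Gamma_+(J))$ I would then set up the one-parameter toric deformation $(x_1,\dots,x_n)\mapsto(t^{v_1}x_1,\dots,t^{v_n}x_n)$ and let $g_i^v$ denote the corresponding $v$-initial form. Condition~(b) guarantees that $(g_1^v,\dots,g_n^v)$ still defines an isolated zero at the origin, so upper semicontinuity of fibre length over the resulting flat family gives $e(g_1,\dots,g_n)\leq e(g_1^v,\dots,g_n^v)$. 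Iterating these degenerations over all $v\in\F_c(\Gamma_+(J))$ reduces the computation to a purely toric situation, in which a Kouchnirenko-type mixed-volume calculation on $\Gamma_+(J)$ yields the value $\nu_J(g_1)\cdots\nu_J(g_n)e(J)/M_J^n$ exactly; combined with~(\ref{desref}) this forces equality, proving~(a).

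The main obstacle will be the precise bookkeeping of this iterated toric degeneration: at each step one must verify that the limiting tuple of initial forms still cuts out an isolated point, and that the fibre-length semicontinuity applies. This is exactly where the full strength of (b) — the absence of common zeros on $(\C^*)^n$ for every compact face, not merely for the compact facets — enters in an essential way.
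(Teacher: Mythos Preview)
The paper does not prove this theorem at all: it is quoted verbatim from \cite[3.3]{BFS} and used as a black box. There is therefore no ``paper's own proof'' to compare against; any assessment must be of your sketch on its own merits and, if anything, against the original argument in \cite{BFS}, which proceeds via the Newton-graded algebra associated to $\Gamma_+(J)$ rather than through the arc/degeneration picture you outline.

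On the substance of your sketch, the reduction in your first paragraph is not correct. It is \emph{not} true that condition~(b) for all compact faces of $\Gamma_+(J)$ is equivalent to condition~(b) for the compact facets only, even under the standing hypothesis $g^{-1}(0)=\{0\}$. For instance, with $n=2$, $J=\m_2$ and $g=(xy+x^3,\,y^2+x^3)$ one has $g^{-1}(0)=\{0\}$, the unique compact facet gives initial forms $(xy,\,y^2)$ whose common zero locus is $\{y=0\}\subseteq\{xy=0\}$, yet for the vertex $\Delta=\{(1,0)\}$ both $\p_{J,\Delta}(g_i)$ vanish identically, so (b) fails there; and indeed $e(g)=5>4$, so (a) fails. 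Your own final paragraph implicitly acknowledges that lower-dimensional faces are essential, which is in tension with the earlier reduction. The arc argument you give for (a)$\Rightarrow$(b) does extend to an arbitrary compact face $\Delta$ once you pick $v$ in the relative interior of the normal cone of $\Delta$ (so that $\ini_v(g_i)=\p_{J,\Delta}(g_i)$ whenever the latter is nonzero), but as written it only addresses facets. Likewise, in the (b)$\Rightarrow$(a) direction, ``iterating these degenerations over all $v\in\F_c(\Gamma_+(J))$'' does not by itself yield a monomial (purely toric) limit; one really needs the full fan of compact faces, and the bookkeeping you allude to is where the actual work lies. In \cite{BFS} this is handled through the associated graded ring of the Newton filtration rather than by successive one-parameter degenerations.
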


Under the hypothesis of Theorem \ref{thm3.3BFS}, let us assume that
$\nu_J(g_1)=\cdots=\nu_J(g_n)=M_J$. Hence $\p_{J,\Delta}(g_i)=(g_i)_\Delta$, for all $i=1,\dots, n$.
Therefore, in this case, $g$ is $J$-non-degenerate if and only if
$e(g_1,\dots, g_n)=e(J)$, which is to say that $\langle g_1,\dots, g_n\rangle$ is a reduction of $J$,
by the Rees' multiplicity theorem \cite[p.\,222]{HS}.

\begin{rem}\label{NNDideals}
We recall that reductions of monomial ideal ideals are characterized in \cite[Proposition 3.6]{BiviaMRL}. These are the so called
{\it Newton non-degenerate ideals} (see \cite{BiviaMRL,BFS,S,T2}).
Let $I=\langle g_1,\dots, g_s\rangle$ be an ideal of $\O_n$. Then,
$I$ is called
Newton non-degenerate when the set germ at $0$ of $\{x\in\C^n: (g_1)_\Delta(x)=\cdots=(g_s)_\Delta(x)=0\}$
is contained in $\{x\in\C^n: x_1\cdots x_n=0\}$, for all compact faces $\Delta$ of $\Gamma_+(I)$
(it is immediate to check that this definition does not depend on the chosen generating
system of $I$). This kind of ideals was originally introduced by Saia in \cite{S} motivated by the notion of Newton non-degenerate function (see \cite{K}).
\end{rem}

As we see in the next result, if $g$ is a $J$-non-degenerate map, then the sequence of mixed multiplicities $e_i(I(g), J)$, $i=0,\dots, n$, can also be expressed in terms of $\nu_J$, where $I(g)$ denotes the ideal of $\O_n$ generated by the component functions of $g$.

\begin{prop}\label{eideg}\cite{BiviaeIeJ}
Let $g=(g_1,\dots, g_n):(\C^n,0)\to (\C^n,0)$ be a $J$-non-degenerate map.
Let $d_i=\nu_J(g_i)$, for all $i=1,\dots, n$. Let us suppose that $d_1\leq \cdots \leq d_n$. Then
$$
e_i(I(g), J)=\frac{d_1\cdots d_i}{M^i}e(J).
$$
\end{prop}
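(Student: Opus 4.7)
My plan is to identify $e_i(I(g),J)$ with the Samuel multiplicity of an explicit tuple that is both $J$-non-degenerate (so that the formula \eqref{gJnodeg} applies) and a joint reduction of $(I(g),\dots,I(g),J,\dots,J)$ (with $I(g)$ repeated $i$ times). The natural candidate is $h=(g_1,\dots,g_i,f_{i+1},\dots,f_n)$ with $f_{i+1},\dots,f_n$ sufficiently general in $J$. The point is that, while any tuple of generic elements in $I(g)$ would simply pick up the minimum Newton order $d_1$, placing the specific generators $g_1,\dots,g_i$ in the first $i$ slots will make the orders $d_1,\dots,d_i$ appear separately in the formula.

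The main technical step is to verify that $h$ is $J$-non-degenerate as a map $(\C^n,0)\to(\C^n,0)$ via Theorem \ref{thm3.3BFS}. For each compact face $\Delta$ of $\Gamma_+(J)$, since $\nu_J(f_j)=M_J$ for generic $f_j\in J$, one has $\p_{J,\Delta}(f_j)=(f_j)_\Delta$, a generic $\C$-linear combination of the monomials supported on $\Delta$. The $J$-non-degeneracy of $g$ says $\p_{J,\Delta}(g_1),\dots,\p_{J,\Delta}(g_n)$ have no common zero on the torus $(\C^*)^n$; a Bertini/Bernstein-type dimension count then shows that replacing the last $n-i$ of these by generic $\C$-linear combinations of the monomials on $\Delta$ preserves the emptiness on the torus, so $h$ meets condition (b) of Theorem \ref{thm3.3BFS}. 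Applying \eqref{gJnodeg} to $h$ with $\nu_J(g_j)=d_j$ for $j\leq i$ and $\nu_J(f_j)=M_J$ for $j>i$ yields
$$e(g_1,\dots,g_i,f_{i+1},\dots,f_n)=\frac{d_1\cdots d_i\,M_J^{n-i}}{M_J^n}\,e(J)=\frac{d_1\cdots d_i}{M_J^i}\,e(J).$$

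To identify this Samuel multiplicity with $e_i(I(g),J)$, I would combine Proposition \ref{Prop2.9} applied to the principal-ideal sub-tuple $(\langle g_1\rangle,\dots,\langle g_i\rangle,J,\dots,J)$ --- which gives $\sigma(\langle g_1\rangle,\dots,\langle g_i\rangle,J,\dots,J)=e(g_1,\dots,g_i,f_{i+1},\dots,f_n)$, since a generic element of a principal ideal is a unit multiple of its generator --- with the monotonicity of $\sigma$ under ideal enlargement ($\langle g_j\rangle\subseteq I(g)$) to obtain the upper bound $e_i(I(g),J)\leq \tfrac{d_1\cdots d_i}{M_J^i}e(J)$. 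For the matching lower bound, I would show that $h$ is itself a joint reduction of $(I(g),\dots,I(g),J,\dots,J)$, so that its Samuel multiplicity coincides with $e_i(I(g),J)$; this can be extracted from the Rees--B\"oger characterization of joint reductions combined with the already-established $J$-non-degeneracy of $h$, which controls the integral closures of $\langle g_1\rangle\cdots\langle g_i\rangle J^{n-i}$ and $I(g)^i J^{n-i}$ along the relevant Rees valuations.

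The chief obstacle is the $J$-non-degeneracy verification for $h$: because the first $i$ slots are the specific (non-generic) components $g_1,\dots,g_i$, no direct genericity argument applies to them. The torus-theoretic rigidity supplied by the $J$-non-degeneracy of the full map $g$ must be leveraged face-by-face on $\Gamma_+(J)$, carrying out a uniform Bernstein-style dimension count to confirm that the generic hypersurfaces $(f_{i+1})_\Delta,\dots,(f_n)_\Delta$ genuinely fill in for the missing $\p_{J,\Delta}(g_{i+1}),\dots,\p_{J,\Delta}(g_n)$ without reintroducing torus solutions.
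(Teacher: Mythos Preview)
This proposition is not proved in the present paper; it is quoted from \cite{BiviaeIeJ}. Nonetheless, the essential technique is visible in the paper, in the proof of Theorem~\ref{LstarJnodeg} (the matrix argument leading to \eqref{segona}), and your proposal should be compared to that.

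Your upper bound is sound, but you are working harder than necessary. Rather than verifying face by face via Theorem~\ref{thm3.3BFS} that $(g_1,\dots,g_i,f_{i+1},\dots,f_n)$ is $J$-non-degenerate, observe that Corollary~\ref{Jpasseja} (applied $n-i$ times) gives directly that the $n$-tuple of ideals $(\langle g_1\rangle,\dots,\langle g_i\rangle,J,\dots,J)$ is $J$-non-degenerate, so $\sigma(\langle g_1\rangle,\dots,\langle g_i\rangle,J,\dots,J)=\frac{d_1\cdots d_i}{M^i}e(J)$; the inclusion $\langle g_j\rangle\subseteq I(g)$ and monotonicity then yield $e_i(I(g),J)\leq\frac{d_1\cdots d_i}{M^i}e(J)$ with no torus dimension count needed.

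Your lower bound, however, has a genuine gap. You propose to show that $h=(g_1,\dots,g_i,f_{i+1},\dots,f_n)$ is a joint reduction of $(I(g),\dots,I(g),J,\dots,J)$, but in a quasi-unmixed ring this is \emph{equivalent} (via Rees' characterization) to $e(h)=e_i(I(g),J)$, which is exactly what you are trying to prove; your remarks about Rees valuations and integral closures of $\langle g_1\rangle\cdots\langle g_i\rangle J^{n-i}$ versus $I(g)^iJ^{n-i}$ do not supply an independent verification. The argument the paper actually uses (see \eqref{AB=F}--\eqref{segona}) is different and direct: take a generic $(h_1,\dots,h_i)\in I(g)\oplus\cdots\oplus I(g)$ computing $e_i(I(g),J)$, write $h_j=\sum_k a_{jk}g_k$ with generic $(a_{jk})$, and use that the first $i$ columns of this generic $i\times n$ matrix are invertible to replace $(h_1,\dots,h_i)$ by $(h'_1,\dots,h'_i)$ with $h'_j=g_j+\sum_{k>i}\alpha_{jk}g_k$, generating the same ideal. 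Since $d_1\leq\cdots\leq d_n$, one has $\nu_J(h'_j)\geq d_j$, and then the general inequality \eqref{desref} applied to the principal-ideal tuple gives
\[
e_i(I(g),J)=e(h'_1,\dots,h'_i,f_{i+1},\dots,f_n)\geq \frac{d_1\cdots d_i\,M^{n-i}}{M^n}\,e(J)=\frac{d_1\cdots d_i}{M^i}\,e(J),
\]
which is the missing lower bound.
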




The next result shows a characterization of the $J$-non-degeneracy of $n$-tuples of ideals.

\begin{prop}\label{caractwrtJ}\cite{BiviaeIeJ}
Let $I_1,\dots, I_n$ be ideals of $\O_n$ such that $\sigma(I_1,\dots, I_n)<\infty$.
Then, $(I_1,\dots, I_n)$ is $J$-non-degenerate if and only if there exist $a_1,\dots, a_n, d\in\Z_{\geq 1}$ such that
$\sigma(I_1^{a_1},\dots, I_n^{a_n})=e(J^d)$ and $\nu_J(I_1^{a_1})=\cdots=\nu_J(I_n^{a_n})=dM_J$.
\end{prop}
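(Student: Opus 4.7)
The plan is to reduce both implications to three multiplicative identities: (i) $\sigma(I_1^{a_1},\dots,I_n^{a_n})=a_1\cdots a_n\cdot\sigma(I_1,\dots,I_n)$; (ii) $\nu_J(I^a)=a\,\nu_J(I)$ for every non-zero ideal $I$ of $\O_n$ and every $a\in\Z_{\geq 1}$; and (iii) $e(J^d)=d^n\,e(J)$, a classical feature of Samuel multiplicity. Identity (ii) follows from the positive homogeneity of $\phi_J$ together with its superadditivity on $\R^n_{\geq 0}$ (it being the minimum of finitely many linear functionals with non-negative coefficients): if $g\in I$ attains $\nu_J(g)=\nu_J(I)$ at a monomial $x^{k_0}$, then $g^a\in I^a$ yields $\nu_J(g^a)=\phi_J(ak_0)=a\,\nu_J(I)$, while the inequality $\nu_J(I^a)\geq a\,\nu_J(I)$ is automatic. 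Identity (i) is the natural extension to Rees' mixed multiplicity of the classical multilinearity of Teissier--Risler mixed multiplicities; granted Proposition~\ref{Prop2.9}, it comes from picking sufficiently general $g_i\in I_i$ with $\sigma(I_1,\dots,I_n)=e(g_1,\dots,g_n)$ and transferring the finite-colength identity $e(g_1^{a_1},\dots,g_n^{a_n})=a_1\cdots a_n\,e(g_1,\dots,g_n)$.

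For the forward direction, I assume that $(I_1,\dots,I_n)$ is $J$-non-degenerate and take
\[
d=\prod_{j=1}^n\nu_J(I_j),\qquad a_i=M_J\prod_{j\neq i}\nu_J(I_j)\quad\text{for }i=1,\dots,n,
\]
so that $a_i\nu_J(I_i)=M_J\prod_j\nu_J(I_j)=dM_J$ for every $i$ by (ii). Combining (i), the defining equality of $J$-non-degeneracy, and (iii) gives
\[
\sigma(I_1^{a_1},\dots,I_n^{a_n})=a_1\cdots a_n\cdot\frac{\nu_J(I_1)\cdots\nu_J(I_n)}{M_J^n}\,e(J)=\frac{(dM_J)^n}{M_J^n}\,e(J)=d^n e(J)=e(J^d),
\]
which is the desired conclusion.

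For the converse, from $\nu_J(I_i^{a_i})=dM_J$ and (ii) one reads off $\nu_J(I_i)=dM_J/a_i$, whence $\prod_i\nu_J(I_i)=d^n M_J^n/(a_1\cdots a_n)$. Using the multiplicity hypothesis together with (i) and (iii),
\[
\sigma(I_1,\dots,I_n)=\frac{\sigma(I_1^{a_1},\dots,I_n^{a_n})}{a_1\cdots a_n}=\frac{d^n e(J)}{a_1\cdots a_n}=\frac{\nu_J(I_1)\cdots\nu_J(I_n)}{M_J^n}\,e(J),
\]
which is precisely the equality in (\ref{desref}) characterising $J$-non-degeneracy.

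The main obstacle will be rigorously establishing (i), the multiplicativity of $\sigma$ under taking powers. For finite-colength ideals this is classical, but the passage to the possibly non-finite-colength setting of Definition~\ref{lasigma} demands a separate argument: cleanest through Proposition~\ref{Prop2.9}, after verifying that sufficient genericity of $(g_1,\dots,g_n)$ in $I_1\oplus\cdots\oplus I_n$ transfers to sufficient genericity of $(g_1^{a_1},\dots,g_n^{a_n})$ in $I_1^{a_1}\oplus\cdots\oplus I_n^{a_n}$; alternatively, one can carry out a direct comparison at the truncations $I_i+\m_n^r$ using Artin--Rees, exploiting that the max in (\ref{sigma}) is attained for $r\gg 0$. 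Once (i) is in hand, the rest of the argument is exactly the bookkeeping displayed above.
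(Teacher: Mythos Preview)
The paper does not contain a proof of this proposition: it is stated with a citation to \cite{BiviaeIeJ} and used as a black box. So there is no in-paper argument to compare against; your outline is essentially the natural proof and it is correct once identity (i) is secured.

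A word of caution on (i). Your first suggested route, ``genericity transfers from $(g_1,\dots,g_n)$ to $(g_1^{a_1},\dots,g_n^{a_n})$'', does not work as stated: $g_i^{a_i}$ is a very special element of $I_i^{a_i}$, not a generic $\C$-linear combination of a generating system, so Proposition~\ref{Prop2.9} gives you only the inequality $e(g_1^{a_1},\dots,g_n^{a_n})\geq \sigma(I_1^{a_1},\dots,I_n^{a_n})$. That already yields $\sigma(I_1^{a_1},\dots,I_n^{a_n})\leq a_1\cdots a_n\,\sigma(I_1,\dots,I_n)$, but not the reverse. For the reverse inequality your second route via truncations is the clean one: observe first that in Definition~\ref{lasigma} one may allow different truncation levels $r_i$ without changing the supremum (by monotonicity of mixed multiplicities), and then use
\[
I_i^{a_i}+\m^{a_i s}\subseteq (I_i+\m^s)^{a_i}
\]
together with multilinearity of mixed multiplicities for finite-colength ideals to get
\[
\sigma(I_1^{a_1},\dots,I_n^{a_n})\geq e\big((I_1+\m^s)^{a_1},\dots,(I_n+\m^s)^{a_n}\big)=a_1\cdots a_n\,e(I_1+\m^s,\dots,I_n+\m^s)
\]
for every $s$, hence $\sigma(I_1^{a_1},\dots,I_n^{a_n})\geq a_1\cdots a_n\,\sigma(I_1,\dots,I_n)$. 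With (i) in hand, identities (ii) and (iii) are routine (for (ii), note that each $\frac{M_J}{\ell(v^i,\Gamma_+)}\langle\cdot,v^i\rangle$ is a monomial valuation, so $\nu_J(f^a)=a\,\nu_J(f)$ for every $f$, from which $\nu_J(I^a)=a\,\nu_J(I)$ follows as you indicate), and your bookkeeping in both directions is correct.
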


We remark that the previous result has been our motivation to introduce in \cite[Definition 7]{BiviaeIeJ} the notion
of $J$-non-degeneracy of a sequence of elements $g_1,\dots, g_n$ in an arbitrary local ring $(R,\m)$, where $J$ denotes
any proper ideal of $R$. We will apply the following result in Section \ref{Lstar}.

\begin{cor}\label{Jpasseja}
Let $J$ be a monomial ideal of $\O_n$ of finite colength.
Let $(I_1,\dots, I_n)$ be a $J$-non-degenerate $n$-tuple of ideals of $\O_n$.
Then, $(I_1,\dots, I_{i-1}, J, I_{i+1},\dots, I_n)$ is $J$-non-degenerate, for all $i=1,\dots, n$.
\end{cor}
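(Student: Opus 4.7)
\emph{Strategy.} The plan is to use the characterization of $J$-non-degeneracy in Proposition \ref{caractwrtJ} both as input and as output. From the hypothesis I obtain exponents $a_1,\dots,a_n,d\in\Z_{\geq 1}$ with $\sigma(I_1^{a_1},\dots,I_n^{a_n})=e(J^d)$ and $\nu_J(I_j^{a_j})=dM_J$ for every $j$. Fixing $i$, I would then show that the exponents $(a_1,\dots,a_{i-1},d,a_{i+1},\dots,a_n)$ together with the same $d$ witness $J$-non-degeneracy of $(I_1,\dots,I_{i-1},J,I_{i+1},\dots,I_n)$. The first required condition, $\nu_J(J^d)=dM_J$, is immediate: the inclusion $J^d\subseteq\overline{J^d}=\B_{dM_J}$ gives $\nu_J(J^d)\geq dM_J$, while evaluating $\nu_J$ on $(x^k)^d$ for any vertex $k$ of $\Gamma(J)$ yields $\nu_J((x^k)^d)=\phi_J(dk)=dM_J$ for the reverse inequality.

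\emph{Equality of Rees' mixed multiplicities.} The lower bound comes directly from inequality (\ref{desref}) applied to the modified tuple:
\[
\sigma(I_1^{a_1},\dots,J^d,\dots,I_n^{a_n})\geq \frac{(dM_J)^n}{M_J^n}\,e(J)=d^n e(J)=e(J^d).
\]
For the upper bound the crucial step is the inclusion $I_i^{a_i}\subseteq\overline{J^d}$. Indeed, $\nu_J(I_i^{a_i})=dM_J$ forces every $h\in I_i^{a_i}$ to have $\phi_J(k)\geq dM_J$ for each $k\in\supp(h)$; this is equivalent to $k\in d\,\Gamma_+(J)=\Gamma_+(J^d)$, and since $\overline{J^d}$ is the monomial ideal with Newton polyhedron $\Gamma_+(J^d)$, it follows that $h\in\overline{J^d}$. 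Combining the monotonicity of $\sigma$ in each argument with its invariance under integral closure (both inherited from the corresponding properties of mixed multiplicity and the definition $\sigma(K_1,\dots,K_n)=\sup_r e(K_1+\m_n^r,\dots,K_n+\m_n^r)$) yields
\[
\sigma(I_1^{a_1},\dots,J^d,\dots,I_n^{a_n})=\sigma(I_1^{a_1},\dots,\overline{J^d},\dots,I_n^{a_n})\leq \sigma(I_1^{a_1},\dots,I_i^{a_i},\dots,I_n^{a_n})=e(J^d).
\]
The two bounds together give the required equality, and a final application of Proposition \ref{caractwrtJ} in the opposite direction certifies the $J$-non-degeneracy of the modified tuple.

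\emph{Main obstacle.} The only genuinely nontrivial point is the inclusion $I_i^{a_i}\subseteq\overline{J^d}$ in the case where $I_i$ is not itself monomial; it rests on the identification $\overline{J^d}=\B_{dM_J}$, that is, on the equivalence $\phi_J(k)\geq dM_J\iff k\in\Gamma_+(J^d)$ combined with the standard fact that a monomial ideal of $\O_n$ contains a power series precisely when it contains every monomial appearing in its support. Once this is in place, the rest of the argument reduces to routine bookkeeping with (\ref{desref}) and the monotonicity and integral-closure invariance of $\sigma$.
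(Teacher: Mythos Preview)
Your proof is correct and follows essentially the same route as the paper's: invoke Proposition~\ref{caractwrtJ} to obtain exponents with $\nu_J(I_j^{a_j})=dM_J$, use this to get the inclusion $I_i^{a_i}\subseteq\overline{J^d}$, and then sandwich $\sigma(I_1^{a_1},\dots,J^d,\dots,I_n^{a_n})$ between $e(J^d)$ from below (via (\ref{desref})) and $e(J^d)$ from above (via monotonicity of $\sigma$ and the inclusion), concluding with Proposition~\ref{caractwrtJ} in the reverse direction. The only difference is presentational: the paper compresses the sandwich into a single displayed chain of inequalities, whereas you spell out the auxiliary facts (that $\nu_J(J^d)=dM_J$, that $\overline{J^d}=\B_{dM_J}$, and that $\sigma$ is monotone and integral-closure invariant) more carefully.
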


\begin{proof}
Let us suppose, without loss of generality, that $i=1$. Let $M=M_J$. By Proposition \ref{caractwrtJ}, there exist
$a_1,\dots,a_n,d\in\Z_{\geq 1}$ such that $e(I_1^{a_1},\dots, I_n^{a_n})=e(J^d)$ and $\nu_J(I_1^{a_1})=\cdots=\nu_J(I_n^{a_n})=dM$.
In particular, the condition $\nu_J(I_1^{a_1})=dM$ implies that
$
I_1^{a_1}\subseteq \overline{J^{d}}.
$

Therefore
\begin{align}
e(J^d)=e(I_1^{a_1},\dots, I_n^{a_n})&\geq e(\overline{J^{d}},I_2^{a_2},\dots, I_n^{a_n}) \nonumber\\
&\geq \frac{dM\cdots dM}{M^n}e(J)=d^ne(J)=e(J^d),\label{ineq2}
\end{align}
where we have applied (\ref{desref}) in the first inequality of (\ref{ineq2}). Hence the result follows by applying
Proposition \ref{caractwrtJ}.
\end{proof}

\section{Mixed \L ojasiewicz exponents and Hickel ideals}\label{mixedL}

Let $J$ and $I$ be proper ideals of $\O_n$. Let us suppose that
$\{f_1,\dots, f_p\}$ is a generating system of $J$ and
$\{g_1,\dots, g_q\}$ is a generating system of $I$. Let us consider the maps
$f=(f_1,\dots, f_p):(\C^n,0)\to (\C^p,0)$ and $g=(g_1,\dots, g_q):(\C^n, 0)\to (\C^q,0)$.
The {\it \L ojasiewicz exponent of $I$ with respect to $J$}, denoted by $\LL_J(I)$, is defined
as the infimum of the set of those $\alpha\in \R_{\geq 0}$ for which there exists a constant $C>0$ and an open neighbourhood $U$ of $0\in\C^n$
such that
\begin{equation}\label{Lojanalytic}
\Vert f(x)\Vert^\alpha \leq C\Vert g(x)\Vert,\quad\text{for all $x\in U$.}
\end{equation}
When the set of such $\alpha$ is empty, then we fix $\LL_J(I)=+\infty$.
It is known that $\LL_J(I)$ exists if and only if $\V(I)\subseteq \V(J)$ and, if this is the case, then $\LL_J(I)$ is a rational number
(see \cite[Théorème 4.6]{LT} or \cite{Tresonances}). Moreover, in \cite[Théorème 7.2]{LT} the \L ojasiewicz exponent of $I$
with respect to $J$ is characterized as follows:
\begin{equation}\label{LJIintegral2}
\LL_J(I)=\inf\left\{\frac rs: r,s\in\Z_{\geq 1},\,J^r\subseteq \overline{I^s}    \right\}.
\end{equation}
Therefore $J\subseteq \overline I$ if and only if $\LL_J(I)\leq 1$.


\begin{rem}\label{LJgeq1} \begin{enumerate}
\item[(a)] Let us suppose that $\V(I)=\V(J)$. By (\ref{LJIintegral2}) it is immediate to see that $\LL_J(I)\LL_I(J)\geq 1$.
Hence, if $J\subseteq \overline I$, then $\LL_I(J)\geq 1$.

\item[(b)] We recall that relation (\ref{LJIintegral2}) constitutes the definition of {\it \L ojasiewicz exponent of $I$ with respect to $J$} whenever
$I$ and $J$ are ideals of an arbitrary Noetherian local ring such that $\sqrt{J}\subseteq \sqrt{I}$.
\end{enumerate}
\end{rem}

Let $g\in \O_n$, $g\neq 0$. We define the {\it order of $g$}, denoted by $\ord(g)$, as the maximum of those $r\in\Z_{\geq 0}$
for which $g\in\m_n^r$. We set $\ord(0)=+\infty$. If $I$ is an ideal of $\O_n$, then we define the {\it order of $I$} as
$\ord(I)=\min\{\ord(g): g\in I\}=\max\{r\in\Z_{\geq 0}: I\subseteq \m_n^r\}$.

Let $\varphi=(\varphi_1,\dots, \varphi_n):(\C,0)\to (\C^n,0)$ be an analytic curve.
We define the {\it order of $\varphi$} as $\ord(\varphi)=\min\{ \ord(\varphi_1),\dots, \ord(\varphi_n)\}$.
Let $\varphi^*:\O_n\to \O_1$ be the ring morphism given by $g\mapsto g\circ \varphi$, for all $g\in\O_n$.
If $I=\langle g_1,\dots, g_s\rangle$ is a non-zero ideal of $\O_n$, then
$\ord(\varphi^*(I))=\min\{\ord(g\circ \varphi):g\in I\}
=\min\{\ord(g_1\circ\varphi),\dots,\ord(g_s\circ\varphi)\}$.

We will apply the following result in Section \ref{applications}.

\begin{lem}\cite{LT}\label{byarcs}
Let $I$ and $J$ be proper ideals of $\O_n$ such that $\V(I)\subseteq \V(J)$. Then
$$
\LL_J(I)=\sup_{\varphi\in \Omega}\frac{\ord(\varphi^*(I))}{\ord(\varphi^*(J))},
$$
where $\Omega$ denotes the set of non-zero analytic curves $\varphi:(\C,0)\to (\C^n,0)$.
\end{lem}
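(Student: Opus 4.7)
The plan is to establish the two inequalities separately. First, I would prove $\LL_J(I)\geq\sup_{\varphi\in\Omega}\ord(\varphi^*(I))/\ord(\varphi^*(J))$ by plugging analytic arcs into the analytic definition (\ref{Lojanalytic}). Second, I would prove the reverse inequality by combining the algebraic characterization (\ref{LJIintegral2}) with the valuative criterion for integral closure of ideals.

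For the first step, fix $\varphi\in\Omega$ with $\ord(\varphi^*(J))<\infty$. Take generating systems $f=(f_1,\dots,f_p)$ of $J$ and $g=(g_1,\dots,g_q)$ of $I$, and pick any $\alpha>\LL_J(I)$. By (\ref{Lojanalytic}) there exists $C>0$ with $\Vert f(x)\Vert^\alpha\leq C\Vert g(x)\Vert$ in some neighbourhood of the origin. Substituting $x=\varphi(t)$ and comparing orders as $t\to 0$ (using, say, the sup norm, whose order equals the minimum of the orders of the components, that is $\ord(\varphi^*(J))$ for $f$ and $\ord(\varphi^*(I))$ for $g$), one obtains $\alpha\cdot\ord(\varphi^*(J))\geq\ord(\varphi^*(I))$. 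Letting $\alpha\searrow\LL_J(I)$ and taking the supremum over such $\varphi$ yields the desired bound. Arcs with $\ord(\varphi^*(J))=\infty$ contribute ratios that are either zero or indeterminate, and so they do not affect the supremum.

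For the second step, by (\ref{LJIintegral2}) it suffices to verify that any pair $r,s\in\Z_{\geq 1}$ with $r/s>\sup_{\varphi\in\Omega}\ord(\varphi^*(I))/\ord(\varphi^*(J))$ satisfies $J^r\subseteq\overline{I^s}$. Here I would invoke the valuative criterion for integral closure: an element $h\in\O_n$ belongs to $\overline{I^s}$ if and only if $\ord(h\circ\varphi)\geq s\cdot\ord(\varphi^*(I))$ for every $\varphi\in\Omega$ (see \cite{HS} or \cite{LT}). Given $h\in J^r$, expanding $h$ in $r$-fold products of generators of $J$ immediately gives $\ord(h\circ\varphi)\geq r\cdot\ord(\varphi^*(J))$. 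The choice of $r,s$ then forces $r\cdot\ord(\varphi^*(J))\geq s\cdot\ord(\varphi^*(I))$ for every $\varphi$ (the edge case $\ord(\varphi^*(J))=\infty$ being trivial, as then $h\circ\varphi=0$). Therefore $h\in\overline{I^s}$, so $J^r\subseteq\overline{I^s}$ and $\LL_J(I)\leq r/s$; taking the infimum over admissible pairs concludes the argument.

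The main obstacle is the valuative criterion for integral closure invoked in the second step, which is itself a substantial classical theorem relying on Rees valuations or normalized blow-ups; for the purposes of the present paper it can be taken as a black box from \cite{HS}. The remaining verifications are routine order estimates along analytic arcs together with careful handling of the edge case $\ord(\varphi^*(J))=+\infty$.
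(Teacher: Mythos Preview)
The paper does not supply its own proof of this lemma: it is quoted verbatim from \cite{LT} (the citation appears inside the lemma environment) and is used as a black box later in Section~\ref{applications}. So there is nothing in the paper to compare your argument against.

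That said, your proof is correct and is essentially the standard derivation of this result. The first inequality, obtained by substituting an arc into the analytic inequality~(\ref{Lojanalytic}) and reading off orders, is entirely routine. For the second inequality you correctly reduce to the valuative criterion for integral closure (which in $\O_n$ can be phrased with analytic arcs, as in \cite{LT} or \cite[Theorem~6.8.3]{HS}); this is exactly the nontrivial input, and you are right to flag it as such. Your handling of the edge case $\ord(\varphi^*(J))=+\infty$ is adequate: the hypothesis $\V(I)\subseteq\V(J)$ ensures that $\ord(\varphi^*(I))=+\infty$ forces $\ord(\varphi^*(J))=+\infty$, so the ratio is never $+\infty$, and when $\ord(\varphi^*(J))=+\infty$ any $h\in J^r$ satisfies $h\circ\varphi=0$, making the valuative test trivially pass. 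One very minor point: in the last line you should take the infimum over all rationals $r/s$ strictly larger than the supremum, and density of $\Q$ then gives $\LL_J(I)\leq\sup_\varphi(\cdots)$; you state this but it is worth spelling out.
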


We recall that in the Rees' multiplicity theorem the quasi-unmixedness condition on the given ring is required (see \cite[p.\,222]{HS}).
This condition is also known as formal equidimensionality (see \cite[p.\,401]{HS} or
\cite[p.\,251]{Matsumura}). Moreover, by \cite[p.\,149]{HIO},
if $(R,\m)$ is a Noetherian local ring, then $R$ is quasi-unmixed
if and only if the equality $\overline J=\overline I$ holds, for any
pair of ideals $J$ and $I$ of $R$ such that $J\subseteq I$ and $e(I)=e(J)$.

In the remaining section, we denote by $(R,\m)$ a Noetherian quasi-unmixed local ring of dimension $n$.
Let us fix two integers $p,q\in\Z_{\geq 1}$ and let $I$ and $J$ be ideals of $R$.
Then, we have the following equivalences:
$$
J^r\subseteq \overline{I^s}\Longleftrightarrow
\overline{I^s}=\overline{I^s+J^r}\Longleftrightarrow e(I^s)=e(I^s+J^r).
$$

Therefore, by (\ref{LJIintegral2}), we can write $\LL_J(I)$ as follows:
\begin{equation}\label{LJImult}
\LL_J(I)=\inf\left\{\frac rs: r,s\in\Z_{\geq 1},\,e(I^s)=e(I^s+J^r)    \right\}.
\end{equation}

As recalled in Section \ref{M&NF},
the notion of Samuel multiplicity of an ideal $I$ of $R$ of finite colength was extended to $n$-tuples
$(I_1,\dots, I_n)$ of ideals of finite colength by Teissier and Risler in \cite{Cargese}.
Analogously, applying the notion of Rees' mixed multiplicity (Definition \ref{lasigma})
and relation (\ref{LJImult}), we extended the notion of \L ojasiewicz exponent $\LL_J(I)$ to $n$-tuples of ideals $\LL_J(I_1,\dots, I_n)$
(see \cite{BiviaMZ2} and \cite{BENewton}).

Let $I_1,\dots, I_n$ be ideals of $R$ such that $\sigma(I_1,\dots, I_n)<\infty$ and let $J$
be a proper ideal of $R$.
We define
\begin{equation}\label{erreJ}
r_J(I_1,\dots, I_n)=\min\big\{r\in\Z_{\geq 1}: \sigma(I_1,\dots,
I_n)=\sigma(I_1+J^r,\dots, I_n+J^r)\big\}.
\end{equation}
When $J=\m$, we denote $r_\m(I_1,\dots, I_n)$ simply by $r(I_1,\dots, I_n)$
.

\begin{defn}\label{mixedLoj}
Let $I_1,\dots, I_n$
be ideals of $R$ such that $\sigma(I_1,\dots, I_n)<\infty$. Let $J$ be a proper ideal of $R$.
The {\it \L ojasiewicz exponent of $I_1,\dots, I_n$ with respect to $J$}, denoted by
$\LL_J(I_1,\dots, I_n)$, is defined as
\begin{equation}\label{defLoj}
\LL_J(I_1,\dots, I_n)=\inf_{s\geq 1}\frac{r_J(I_1^s,\dots, I_n^s)}{s}.
\end{equation}
\end{defn}

By applying (\ref{erreJ}), we have
\begin{equation}\label{LJmillor}
\LL_J(I_1,\dots, I_n)=\inf\left\{ \frac rs: r,s\in\Z_{\geq 1},\, \sigma(I_1^s,\dots, I_n^s)=\sigma(I_1^s+J^r,\dots, I_n^s+J^r)\right\}.
\end{equation}

Let $I_1,\dots, I_i, J, I$ be ideals of $R$, where $i\in \{1,\dots, n-1\}$.
If there is no risk of confusion, when we write $\sigma(I_1,\dots, I_i, J,\dots, J)$ or $\LL_I(I_1,\dots, I_i, J,\dots, J)$,
we will tacitly assume that $J$ is repeated $n-i$ times, where we recall that $n=\dim(R)$.

Given ideals $I$ and $J$ of $R$ and an index $i\in\{1,\dots, n\}$, we denote by
$\LL_J^{(i)}(I)$ the \L ojasiewicz exponent $\LL_J(I,\dots, I, J,\dots, J)$, where $I$ is repeated $i$ times and $J$ is repeated $n-i$ times.
Thus $\LL_J^{(n)}(I)=\LL_J(I)$. We set $\LL_J^*(I)=(\LL_J^{(n)}(I),\dots,\LL_J^{(1)}(I))$.
We will denote $\LL_\m^{(i)}(I)$ and $\LL_\m^{*}(I)$ by $\LL_0^{(i)}(I)$ and $\LL_0^{*}(I)$, respectively, for all $i\in\{1,\dots, n\}$.

Let $g=(g_1,\dots, g_p):(\C^n,0)\to(\C^p,0)$ be an analytic map germ and let $J$ be an ideal of $\O_n$. If
$i\in\{1,\dots, n\}$, then we denote $\LL_J^{(i)}(\langle g_1,\dots, g_p\rangle)$ also by $\LL_J^{(i)}(g)$. We will also write
$\LL_0^*(g)$ instead of $\LL_0^*(\langle g_1,\dots, g_p\rangle)$.


\begin{prop}\label{lower} Let $J$ be a proper ideal of $R$.
For each $i=1,\dots, n$, let us consider ideals $I_i$ and $J_i$ of $R$
such that $I_i\subseteq J_i$ and $\sigma(I_1,\dots,
I_n)=\sigma(J_1,\dots,J_n)<\infty$. Then
\begin{equation}\label{monot}
\LL_J(I_1,\dots, I_n)\leqslant \LL_J(J_1,\dots, J_n),
\end{equation}
\end{prop}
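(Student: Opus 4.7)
The plan is to reduce the inequality, via the reformulation (\ref{LJmillor}), to a statement comparing the infimum sets defining the two mixed \L ojasiewicz exponents. Specifically, I would show that every pair $(r,s)\in\Z_{\geq 1}^2$ satisfying $\sigma(J_1^s,\dots,J_n^s) = \sigma(J_1^s+J^r,\dots,J_n^s+J^r)$ also satisfies $\sigma(I_1^s,\dots,I_n^s) = \sigma(I_1^s+J^r,\dots,I_n^s+J^r)$, so that passing to the infimum over such pairs yields (\ref{monot}).

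Two ingredients underpin this reduction. The first is monotonicity of $\sigma$: if $A_i \subseteq B_i$ for each $i$, then $\sigma(A_1,\dots,A_n) \geq \sigma(B_1,\dots,B_n)$, which is immediate from Definition \ref{lasigma} together with the standard monotonicity of Samuel mixed multiplicity on finite-colength ideals applied to $A_i+\m^r\subseteq B_i+\m^r$. The second is the homogeneity identity $\sigma(I_1^s,\dots,I_n^s) = s^n \sigma(I_1,\dots,I_n)$ for every $s\geq 1$, which in combination with the hypothesis $\sigma(I_1,\dots,I_n) = \sigma(J_1,\dots,J_n)$ gives $\sigma(I_1^s,\dots,I_n^s) = \sigma(J_1^s,\dots,J_n^s)$ for every $s$.

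Granted these ingredients, the conclusion is a short chain argument. For any pair $(r,s)$ realizing the condition for $(J_1,\dots,J_n)$, the inclusions $I_i^s \subseteq I_i^s + J^r \subseteq J_i^s + J^r$ combined with monotonicity yield
\[
\sigma(I_1^s,\dots,I_n^s) \geq \sigma(I_1^s+J^r,\dots,I_n^s+J^r) \geq \sigma(J_1^s+J^r,\dots,J_n^s+J^r) = \sigma(J_1^s,\dots,J_n^s) = \sigma(I_1^s,\dots,I_n^s),
\]
so the middle inequalities collapse to equalities and $(r,s)$ lies in the infimum set defining $\LL_J(I_1,\dots,I_n)$. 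Taking the infimum gives (\ref{monot}).

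The main obstacle is establishing the homogeneity property of $\sigma$. I would approach it via the identity $\overline{A^s + B^s} = \overline{(A+B)^s}$, valid for any ideals $A$ and $B$; the non-trivial inclusion $A^j B^{s-j}\subseteq \overline{A^s+B^s}$ follows from $(a^j b^{s-j})^s \in (A^s+B^s)^s$ for $a\in A$, $b\in B$. Applying this with $A=I_i$ and $B=\m^{r'}$ gives $\overline{I_i^s+\m^{sr'}} = \overline{(I_i+\m^{r'})^s}$, and since Samuel mixed multiplicity depends only on integral closures and is homogeneous of degree $n$, parameterizing $r$ by $sr'$ in the definition of $\sigma(I_1^s,\dots,I_n^s)$ and taking the maximum over $r'$ recovers $s^n\sigma(I_1,\dots,I_n)$.
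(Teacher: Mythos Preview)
Your argument is correct and is essentially the approach the paper intends: the paper's proof is simply a pointer to \cite[Proposition 4.7]{BiviaMZ2} with $\m$ replaced by $J$, and your reconstruction---showing that any pair $(r,s)$ in the infimum set for $(J_1,\dots,J_n)$ via (\ref{LJmillor}) also lies in the set for $(I_1,\dots,I_n)$, using monotonicity of $\sigma$ and the homogeneity $\sigma(I_1^s,\dots,I_n^s)=s^n\sigma(I_1,\dots,I_n)$---is exactly that argument. One minor remark: your justification of $A^jB^{s-j}\subseteq\overline{A^s+B^s}$ via $(a^jb^{s-j})^s\in(A^s+B^s)^s$ is literally written only for powers of single elements, but the same computation works verbatim for a general product $a_1\cdots a_j\,b_1\cdots b_{s-j}$, since its $s$-th power lies in $(A^s)^j(B^s)^{s-j}\subseteq(A^s+B^s)^s$; alternatively, the identity $\overline{A^s+B^s}=\overline{(A+B)^s}$ is immediate from the valuative criterion.
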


\begin{proof}
It follows by replacing the maximal ideal $\m$ by $J$ in the proof of \cite[Proposition 4.7]{BiviaMZ2}.
\end{proof}

\begin{prop}\label{decreix}
Let $I$ and $J$ be ideals of $R$ of finite colength such that $I\subseteq \overline J$ and let $i\in\{1,\dots, n\}$, where $n=\dim(R)\geq 2$. Then
\begin{equation}\label{Lisimple}
\LL_J^{(i)}(I)=\inf\left\{\frac rs: r,s\in\Z_{\geq 1}, r\geq s,\,e_{i}(I^s+J^r, J)=e_{i}(I^s, J)\right\}.
\end{equation}
Moreover
\begin{equation}\label{LnL1}
\LL^{(n)}_J(I)\geq \cdots \geq \LL^{(1)}_J(I).
\end{equation}
\end{prop}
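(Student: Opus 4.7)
The plan is to first transform the defining formula (\ref{LJmillor}) for $\LL_J^{(i)}(I)$ into the simpler characterization (\ref{Lisimple}), and then derive the monotonicity (\ref{LnL1}) as a consequence of Swanson's theorem on mixed multiplicities.

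For (\ref{Lisimple}), I start from the definition applied to the tuple $(I,\dots,I,J,\dots,J)$ with $I$ repeated $i$ times. All ideals involved are of finite colength, so the $\sigma$'s equal ordinary mixed multiplicities, and the scaling identity $e(K_1^{a_1},\dots,K_n^{a_n}) = a_1\cdots a_n\, e(K_1,\dots,K_n)$ gives $\sigma(I^s,\dots,I^s,J^s,\dots,J^s) = s^n e_i(I,J)$. I then case-split on $r\geq s$ versus $r<s$. When $r\geq s$, we have $J^s+J^r = J^s$, and the right-hand side becomes $s^{n-i}e_i(I^s+J^r,J)$, so the equation reduces cleanly to $e_i(I^s,J) = e_i(I^s+J^r,J)$, which is precisely the condition in (\ref{Lisimple}). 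The crucial step is to rule out $r<s$: in that case $J^s+J^r = J^r$, and from $I\subseteq\overline J$ one deduces $I^s\subseteq\overline{J^s}\subseteq\overline{J^r}$, so $\overline{I^s+J^r} = \overline{J^r}$ and the right-hand side equals $r^n e(J)$. The resulting equation $s^n e_i(I,J) = r^n e(J)$, combined with the inequality $e_i(I,J)\geq e(J)$ (which follows from $\overline I\subseteq \overline J$ by the monotonicity of mixed multiplicities), forces $r\geq s$, contradicting $r<s$. Hence the infimum may be restricted to $r\geq s$, establishing (\ref{Lisimple}).

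Once (\ref{Lisimple}) is proved, the chain (\ref{LnL1}) follows by containment of the defining sets. Here I invoke the classical Rees--B\"oger--Swanson theorem valid in quasi-unmixed local rings: if $K_j\subseteq L_j$ are finite-colength ideals with $e(K_1,\dots,K_n) = e(L_1,\dots,L_n)$, then $\overline{K_j} = \overline{L_j}$ for every $j$. Applied to the pair of $n$-tuples $(I^s,\dots,I^s,J,\dots,J)$ and $(I^s+J^r,\dots,I^s+J^r,J,\dots,J)$ (with $i$ copies of the first ideal), the equality $e_i(I^s,J) = e_i(I^s+J^r,J)$ yields $\overline{I^s} = \overline{I^s+J^r}$. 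Since mixed multiplicities depend only on integral closures, this identity propagates, giving $e_{i-1}(I^s,J) = e_{i-1}(I^s+J^r,J)$ as well. Therefore the set of ratios $r/s$ defining $\LL_J^{(i)}(I)$ via (\ref{Lisimple}) is contained in the one defining $\LL_J^{(i-1)}(I)$, and the infimum over the smaller set is larger, giving $\LL_J^{(i-1)}(I)\leq \LL_J^{(i)}(I)$. The main technical obstacle is the case analysis $r<s$ in (\ref{Lisimple}); both the computation of $\overline{I^s+J^r}$ under the hypothesis $I\subseteq \overline J$ and the inequality $e_i(I,J)\geq e(J)$ are essential to rule it out, after which the monotonicity statement is a clean application of Swanson's theorem.
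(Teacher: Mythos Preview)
Your argument for (\ref{Lisimple}) is correct and coincides with the paper's: the case split on $r\geq s$ versus $r<s$, together with the observation that $I\subseteq\overline J$ forces $\overline{I^s+J^r}=\overline{J^r}$ when $r<s$, is exactly what the paper does (phrased there as first proving $\LL_J^{(i)}(I)\geq 1$).

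Your argument for (\ref{LnL1}), however, rests on a false statement. The assertion ``if $K_j\subseteq L_j$ are finite-colength ideals with $e(K_1,\dots,K_n)=e(L_1,\dots,L_n)$, then $\overline{K_j}=\overline{L_j}$ for every $j$'' is not a theorem; it fails already in $R=\C\{x,y\}$. Take $K_1=\langle x^2,y\rangle$, $K_2=\m_2$, $L_1=L_2=\m_2$. A generic pair $(a_1,a_2)\in K_1\oplus K_2$ defines a regular system of parameters, so $e(K_1,K_2)=1=e(\m_2,\m_2)$, yet $\overline{K_1}=\langle x^2,y\rangle\neq\m_2$. In the specific setting of the proposition this translates into $I=\langle x^2,y\rangle$, $J=\m_2$, $s=r=1$, $i=1$: one has $e_1(I^s+J^r,J)=e_1(I^s,J)=1$ but $\overline{I^s}\neq\overline{I^s+J^r}$. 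Swanson's theorem only yields that the \emph{product} $K_1\cdots K_n$ is a reduction of $L_1\cdots L_n$ (equivalently, a joint-reduction statement), which is not enough to propagate the equality of a single mixed multiplicity $e_i$ down to $e_{i-1}$.

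The paper closes this gap by passing to quotient rings. One chooses a sufficiently general sequence $(h_1,\dots,h_{n-i})\in J\oplus\cdots\oplus J$ and sets $R_2=R/\langle h_1,\dots,h_{n-i-1}\rangle$, $R_1=R_2/\langle h_{n-i}\rangle$, so that $e_{i+1}(\,\cdot\,,J)$ becomes the Samuel multiplicity in $R_2$ and $e_i(\,\cdot\,,J)$ the Samuel multiplicity in $R_1$. The hypothesis $e_{i+1}(I^s+J^r,J)=e_{i+1}(I^s,J)$ then reads $e((I^s+J^r)R_2)=e(I^sR_2)$, and now the ordinary Rees multiplicity theorem in the quasi-unmixed ring $R_2$ gives $\overline{(I^s+J^r)R_2}=\overline{I^sR_2}$. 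Projecting this equality of integral closures to $R_1$ via the persistence of integral closure yields $e((I^s+J^r)R_1)=e(I^sR_1)$, i.e.\ $e_i(I^s+J^r,J)=e_i(I^s,J)$. The point is that Rees' theorem is applied in the quotient, where the mixed multiplicity has been converted into an ordinary one; the global statement you invoke does not hold.
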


\begin{proof}
As observed in Remark \ref{LJgeq1}, the inclusion $I\subseteq \overline J$ implies that $\LL_J^{(n)}(I)=\LL_J(I)\geq 1$.
Hence the case $i=n$ comes from (\ref{LJImult}).
Let $i\in \{1,\dots, n-1\}$. Let us prove first that $\LL_J^{(i)}(I)\geq 1$. If $\LL_J^{(i)}(I)<1$,
by (\ref{defLoj}), there exists $r,s\in \Z_{\geq 1}$ such that $r<s$ and
\begin{equation}\label{IsJr}
e_{i}(I^s+J^r, J^s+J^r)=e_{i}(I^s, J^s).
\end{equation}
Since $I\subseteq \overline J$ and $r<s$, we have $I^s\subseteq {\overline{J}}^s\subseteq\overline{J^s}\subseteq \overline{J^r}$.
Therefore, the member on the left side of (\ref{IsJr}) is equal to $e_i(J^r, J^r)=e(J^r)=r^ne(J)$. Joining this with (\ref{IsJr}) and considering
the inclusion $I\subseteq\overline J$, we obtain that
$$
r^ne(J)=e_{i}(I^s, J^s)\geq e_{i}(J^s, J^s)=e(J^s)=s^ne(J).
$$
Then, $r\geq s$, which is a contradiction, since $r<s$. Hence $\LL_J^{(i)}(I)\geq 1$,
for all $i=1,\dots, n$.

Let us fix any $i\in\{1,\dots, n-1\}$. Since $\LL_J^{(i)}(I)\geq 1$, by applying (\ref{LJmillor}), we can write:
\begin{align}
\LL_J^{(i)}(I)&=\inf\left\{\frac rs: r,s\in\Z_{\geq 1}, r\geq s,\,e_{i}(I^s+J^r, J^s+J^r)=e_{i}(I^s, J^s)\right\} \nonumber\\
&=\inf\left\{\frac rs: r,s\in\Z_{\geq 1}, r\geq s,\,s^{n-i}e_{i}(I^s+J^r, J)=s^{n-i}e_{i}(I^s, J)\right\}   \nonumber\\
&=\inf\left\{\frac rs: r,s\in\Z_{\geq 1}, r\geq s,\,e_{i}(I^s+J^r, J)=e_{i}(I^s, J)\right\}.\nonumber
\end{align}
and then (\ref{Lisimple}) follows.

Let us prove \eqref{LnL1}. Let us fix $r,s\in\Z_{\geq 1}$ such that $r\geq s$ and $e_{i+1}(I^s+J^r, J)=e_{i+1}(I^s, J)$.
By virtue of the theorem of existence of superficial sequences (see for instance \cite[Proposition 17.2.2]{HS})
and \cite[Theorem 17.4.6]{HS} (see also \cite[p.\,306]{Cargese}), there exists a sufficiently general element
$(h_1,\dots, h_{n-i})\in J\oplus \cdots \oplus J$ such that, if $R_1=R/\langle h_1,\dots, h_{n-i}\rangle$ and $R_2=R/\langle h_1,\dots, h_{n-i-1}\rangle$, then
the following relations hold
\begin{align}
e_{i}(I^s+J^r, J)&=e((I^s+J^r)R_1)    &e_{i}(I^s, J)&=e(I^sR_1)  \label{R1}\\
e_{i+1}(I^s+J^r, J)&=e((I^s+J^r)R_2)    &e_{i+1}(I^s, J)&=e(I^sR_2).\label{R2}
\end{align}
As indicated after Lemma \ref{byarcs}, we assume that $R$ is quasi-unmixed. Therefore the quotient rings $R_1$ and $R_2$ are also quasi-unmixed, by
\cite[Proposition B.4.4]{HS}.
Hence, by the Rees' multiplicity theorem (see \cite[p.\,222]{HS}), we have that the condition $e_{i+1}(I^s+J^r, J)=e_{i+1}(I^s, J)$
and relation (\ref{R2}) imply that
\begin{equation}\label{I2JrR2}
\overline{(I^s+J^r)R_2}=\overline{I^sR_2}.
\end{equation}
Let $\pi:R_2\to R_1$ denote the natural projection. Taking $\pi$ to both sides of (\ref{I2JrR2})
and applying the persistence property of the integral closure of ideals (see \cite[p.\,2]{HS}), that is, the fact that the
image of the integral closure of a given ideal through a ring morphism is contained in the integral closure of
the image of the ideal, we conclude that
\begin{equation}\label{I2JrR1}
\overline{(I^s+J^r)R_1}=\overline{I^sR_1},
\end{equation}
which implies that $e((I^s+J^r)R_1)=e(I^sR_1)$, that is,
$e_i(I^s+J^r, J)=e_i(I^s, J)$, by \eqref{R1}. Therefore, by applying (\ref{Lisimple}), we conclude that
$\LL_J^{(i+1)}(I)\geq \LL_J^{(i)}(I)$.
\end{proof}

Let us consider in $\O_2$ the ideals $J=\m_2^2$ and $I=\m_2$. Then, we have that $\LL_J^{(2)}(I)=\frac 12$ and
$\LL_J^{(1)}(I)=1$. Hence we observe that the condition $I\subseteq \overline J$ can not be eliminated in (\ref{LnL1}).

Let $I$ and $J$ be ideals of $\O_n$ of finite colength.
By \cite[Corollary 3.8]{BF1} we know that
\begin{equation}\label{JHickel}
\frac{e(I)}{e(J)}\leq \LL^{(1)}_J(I)\cdots  \LL^{(n)}_J(I).
\end{equation}

We say that $I$ is {\it Hickel with respect to $J$} when equality holds in (\ref{JHickel}). If this condition holds when $J=\m_n$, then
we will simply say that $I$ is a {\it Hickel ideal} (see \cite[Section 2.2]{BF2}). The same notions are defined analogously for
analytic maps $g:(\C^n,0)\to (\C^p,0)$ such that $g^{-1}(0)=\{0\}$.


\begin{prop}\label{eiei-1}
Let $I$ and $J$ be ideals of $R$ of finite colength. Let $i\in\{1,\dots, n\}$. Then
$$
\frac{e_i(I,J)}{e_{i-1}(I,J)}\leq \LL_J^{(i)}(I).
$$
\end{prop}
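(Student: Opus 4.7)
The plan is to prove the top case $i = n$ first, and then to reduce the general case to it by killing $n - i$ sufficiently general elements of $J$, in the same spirit as the proof of Proposition~\ref{decreix}.

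For the top case, given any $r,s \in \Z_{\geq 1}$ with $J^r \subseteq \overline{I^s}$, the anti-monotonicity of mixed multiplicities together with their invariance under integral closure of each entry yields
\[
s^{n-1} r \cdot e_{n-1}(I, J) = e_{n-1}(I^s, J^r) \geq e_{n-1}\bigl(I^s, \overline{I^s}\bigr) = e(I^s) = s^n\, e(I),
\]
so $s\, e(I) \leq r\, e_{n-1}(I, J)$. Passing to the infimum of $r/s$ via \eqref{LJIintegral2} gives $e(I)/e_{n-1}(I,J) \leq \LL_J(I)$.

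For $1 \leq i < n$, fix $r, s \in \Z_{\geq 1}$ with $e_i(I^s, J^s) = e_i(I^s + J^r, J^s + J^r)$. I would first observe that necessarily $r \geq s$: otherwise $J^s + J^r = J^r$ and anti-monotonicity in the first entry gives
\[
s^n\, e_i(I,J) = e_i(I^s, J^s) = e_i(I^s + J^r, J^r) \leq e_i(I^s, J^r) = s^i r^{n-i}\, e_i(I,J),
\]
which, using $e_i(I,J) > 0$, forces $s \leq r$ and contradicts $r < s$. Hence $J^s + J^r = J^s$, and dividing by $s^{n-i}$ the equality becomes $e_i(I^s, J) = e_i(I^s + J^r, J)$. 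Next, exactly as in the proof of Proposition~\ref{decreix}, I would pick a single sufficiently general element $(h_1, \dots, h_{n-i}) \in J \oplus \cdots \oplus J$ such that, setting $R_1 = R/\langle h_1, \dots, h_{n-i}\rangle$, \cite[Thm.~17.4.6]{HS} gives simultaneously
\[
e_i(K, J) = e_{R_1}(KR_1) \ \text{ for } K\in\{I^s,\, I^s+J^r\}, \quad e_{i-1}(I,J) = e_{R_1}(IR_1, \dots, IR_1, JR_1),
\]
with $IR_1$ repeated $i-1$ times. Then $R_1$ is quasi-unmixed of dimension $i$, and the equality above becomes $e_{R_1}(I^s R_1) = e_{R_1}((I^s + J^r) R_1)$, which by Rees' multiplicity theorem in $R_1$ gives $J^r R_1 \subseteq \overline{I^s R_1}$. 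Applying the top case already proved to the pair $(IR_1, JR_1)$ inside the $i$-dimensional quasi-unmixed ring $R_1$ then produces $s \cdot e_{R_1}(IR_1) \leq r \cdot e_{R_1}(IR_1, \dots, IR_1, JR_1)$, i.e., $s\, e_i(I,J) \leq r\, e_{i-1}(I,J)$. Taking the infimum of $r/s$ in \eqref{LJmillor} yields the proposition.

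The step that requires the most care is the simultaneous generic choice of $h_1, \dots, h_{n-i}$ that makes the mixed-multiplicity identities from \cite[Thm.~17.4.6]{HS} hold together for the three ideals $I$, $I^s$ and $I^s + J^r$, so that the passage to $R_1$ and the subsequent use of Rees' theorem are legitimate; this is handled by the standard open-dense argument already used in the proof of Proposition~\ref{decreix}.
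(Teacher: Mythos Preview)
Your argument is correct and takes a genuinely more self-contained route than the paper. The paper's proof passes to $R'=R/\langle g_{i+1},\dots,g_n\rangle$ for a superficial sequence in $J$ and then invokes two external results from \cite{BF1}: Proposition~3.1 there gives $e(p(I))/e_{i-1}(p(I),p(J))\leq \LL_{p(J)}(p(I))$, and Proposition~3.6 there gives $\LL_{p(J)}(p(I))\leq \LL_J^{(i)}(I)$. You instead prove the top case $i=n$ by a direct monotonicity argument (this is essentially the special case of \cite[Prop.~3.1]{BF1} that is needed), and for $i<n$ you work pair-by-pair with the infimum in \eqref{LJmillor}: after reducing to $R_1$ and using Rees' theorem to obtain $(JR_1)^r\subseteq\overline{(IR_1)^s}$, you feed this specific pair $(r,s)$ back into the top-case inequality in $R_1$. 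In effect you have absorbed the content of \cite[Prop.~3.6]{BF1} into the argument rather than quoting it. The trade-off is that your proof is longer but requires no external input beyond superficial sequences and Rees' theorem, whereas the paper's is a three-line reduction to prior work.

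Two small points worth making explicit. First, in your list of generic conditions on $(h_1,\dots,h_{n-i})$ you use $e_{R_1}(IR_1)=e_i(I,J)$ when translating back at the end; this is not listed, but it follows from your condition for $K=I^s$ after dividing by $s^i$, so no extra genericity is needed. Second, your $r\geq s$ step uses $e_i(I,J)>0$, which holds because mixed multiplicities of $\m$-primary ideals are positive; it may be worth a one-word citation.
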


\begin{proof}
By the theorem of existence of superficial sequences (see \cite[Proposition 17.2.2]{HS}),
there exists a sufficiently general element $(g_{i+1},\dots, g_n)\in J\oplus \cdots\oplus J$ such that
if $p:R\to R/\langle g_{i+1},\dots, g_n\rangle$ denotes the canonical projection, then
$e_{i}(I, J)=e(p(I))$ and $e_{i-1}(I,J)=e_{i-1}(p(I),p(J))$.
By \cite[Proposition 3.1]{BF1} we know that
$$
\frac{e\big(p(I)\big)}{e_{i-1}\big(p(I),p(J)\big)}\leq \LL_{p(J)}\big(p(I)\big).
$$
Then
\begin{equation}\label{seguidet}
\frac{e_i(I,J)}{e_{i-1}(I,J)}=\frac{e\big (p(I)\big)}{e_{i-1}\big(p(I),p(J)\big)}\leq \LL_{p(J)}\big(p(I)\big)\leq
\LL_J(\underbrace{I,\dots, I}_{i}, \underbrace{J,\dots, J}_{n-i})=\LL_J^{(i)}(I),
\end{equation}
where the second inequality of (\ref{seguidet}) follows from \cite[Proposition 3.6]{BF1}.
\end{proof}

As we see in the following lemma, the sequence $\LL_J^*(I)$ is determined by the sequence of mixed multiplicities
$e_0(I,J),e_1(I,J),\dots, e_n(I,J)$ when $I$ is Hickel with respect to $J$. The following result is analogous to \cite[Lemma 5.5]{BF2}.

\begin{cor}\label{HickelwrtJ}
Let $I$ and $J$ be ideals of $R$ of finite colength. Then
\begin{equation}\label{eiIJ}
\frac{e_i(I, J)}{e(J)}\leq \LL_J^{(1)}(I)\cdots\LL_J^{(i)}(I)
\end{equation}
for all $i=1,\dots, n$,
and the following conditions are equivalent:
\begin{enumerate}
\item[(a)] $I$ is Hickel with respect to $J$.
\item[(b)] $\frac{e_{i}(I,J)}{e(J)}=\LL^{(1)}_J(I)\cdots  \LL^{(i)}_J(I)$, for all $i=1,\dots, n$.
\item[(c)] $\LL^{(i)}_J(I)=\frac{e_{i}(I,J)}{e_{i-1}(I,J)}$, for all $i=1,\dots, n$.
\end{enumerate}
\end{cor}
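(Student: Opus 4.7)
The plan is to derive everything from Proposition \ref{eiei-1} together with the normalization $e_0(I,J)=e(J)$. First, for the inequality (\ref{eiIJ}), I would write a telescoping identity
\[
\frac{e_i(I,J)}{e(J)}=\frac{e_i(I,J)}{e_0(I,J)}=\prod_{k=1}^{i}\frac{e_k(I,J)}{e_{k-1}(I,J)},
\]
and then apply Proposition \ref{eiei-1} to each of the $i$ factors on the right. Since $I$ and $J$ have finite colength, all mixed multiplicities $e_k(I,J)$ are strictly positive, so the factors are positive real numbers and we obtain the desired bound $\prod_{k=1}^{i}\LL_J^{(k)}(I)$.

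Next I would prove the equivalence of (a), (b), (c) cyclically, as $(\mathrm{a})\Rightarrow(\mathrm{c})\Rightarrow(\mathrm{b})\Rightarrow(\mathrm{a})$. For $(\mathrm{a})\Rightarrow(\mathrm{c})$: the telescoping identity for $i=n$ reads
\[
\frac{e(I)}{e(J)}=\frac{e_n(I,J)}{e_0(I,J)}=\prod_{k=1}^{n}\frac{e_k(I,J)}{e_{k-1}(I,J)}\leq\prod_{k=1}^{n}\LL_J^{(k)}(I),
\]
and the Hickel hypothesis says the outer two quantities are equal. Since every factor on the left is a positive real number bounded above by the corresponding factor $\LL_J^{(k)}(I)>0$, equality of the products forces equality term by term, yielding (c). For $(\mathrm{c})\Rightarrow(\mathrm{b})$ I would simply multiply the equalities in (c) from $k=1$ to $k=i$, again using $e_0(I,J)=e(J)$. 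Finally, $(\mathrm{b})\Rightarrow(\mathrm{a})$ is immediate by specializing (b) to $i=n$, since $e_n(I,J)=e(I)$.

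The only genuinely non-routine ingredient is Proposition \ref{eiei-1}, which has already been established; the rest is just the elementary observation that, for positive reals, if a product of term-wise bounded factors attains its upper bound, each factor must attain its bound. I therefore do not expect any real obstacle beyond carefully recording the chain of equalities and verifying the positivity of every quantity involved.
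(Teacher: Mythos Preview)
Your proof is correct. Both your argument and the paper's rest on the same two ingredients: the telescoping identity $\frac{e_i(I,J)}{e(J)}=\prod_{k=1}^{i}\frac{e_k(I,J)}{e_{k-1}(I,J)}$ and Proposition~\ref{eiei-1}, so the core is identical. The organizational difference is that the paper establishes (a)$\Rightarrow$(b) by a downward induction on $i$ (sandwiching $\frac{e_{n-1}(I,J)}{e(J)}$ between $\LL_J^{(1)}(I)\cdots\LL_J^{(n-1)}(I)$ from above via (\ref{eiIJ}) and from below via the Hickel equality combined with $\frac{e_{n-1}(I,J)}{e_n(I,J)}\geq\frac{1}{\LL_J^{(n)}(I)}$, then iterating), whereas you go directly to (c) via the elementary observation that if $0<a_k\leq b_k$ for all $k$ and $\prod_k a_k=\prod_k b_k$, then $a_k=b_k$ for every $k$. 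Your route is shorter and avoids the inductive bookkeeping; the paper's route makes the use of both inequalities (\ref{eiIJ}) and Proposition~\ref{eiei-1} more explicit at each step, but this yields nothing extra. The positivity you invoke is justified: $e_k(I,J)>0$ since $I,J$ have finite colength, and $\LL_J^{(k)}(I)\geq\frac{e_k(I,J)}{e_{k-1}(I,J)}>0$ by Proposition~\ref{eiei-1} itself.
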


\begin{proof}

By Proposition \ref{eiei-1} we have that
$$
\frac{e_i(I, J)}{e(J)}=\frac{e_{1}(I, J)}{e_{0}(I, J)} \frac{e_{2}(I, J)}{e_{1}(I, J)}
\cdots \frac{e_i(I, J)}{e_{i-1}(I, J)}\leq \LL_J^{(1)}(I)\cdots \LL_J^{(i)}(I).
$$
Thus (\ref{eiIJ}) follows.

Let us see the implication (a) $\Rightarrow$ (b). So, let us assume that $I$ is Hickel with respect to $J$. Then, we have the following inequalities
\begin{align*}
\LL_J^{(1)}(I)\cdots \LL_J^{(n-1)}(I)&\geq \frac{e_{n-1}(I,J)}{e(J)}=
\frac{e(I)}{e(J)}\frac{e_{n-1}(I,J)}{e(I)}  &&\textnormal{(by (\ref{eiIJ}))}  \nonumber   \\
&\geq
\LL_J^{(1)}(I)\cdots \LL_J^{(n)}(I)\frac{1}{\LL_J(I)}   &&\textnormal{(by Proposition \ref{eiei-1})}  \nonumber    \\
&=\LL_J^{(1)}(I)\cdots \LL_J^{(n-1)}(I).
\end{align*}
Hence
\begin{equation}\label{en-1eJ}
\frac{e_{n-1}(I,J)}{e(J)}=\LL_J^{(1)}(I)\cdots \LL_J^{(n-1)}(I).
\end{equation}
Using this equality, we similarly obtain that
\begin{align*}
\LL_J^{(1)}(I)\cdots \LL_J^{(n-2)}(I)&\geq \frac{e_{n-2}(I,J)}{e(J)}=
\frac{e_{n-2}(I, J)}{e_{n-1}(I,J)}\frac{e_{n-1}(I,J)}{e(J)}  &&\textnormal{(by (\ref{eiIJ}))}  \nonumber   \\
&\geq
\LL_J^{(1)}(I)\cdots \LL_J^{(n-1)}(I)\frac{1}{\LL_J^{(n-1)}(I)}   &&\textnormal{(by (\ref{en-1eJ}) and Proposition
 \ref{eiei-1})}  \nonumber    \\
&=\LL_J^{(1)}(I)\cdots \LL_J^{(n-2)}(I).
\end{align*}
Thus, by applying finite induction we obtain relation (b). The implication (b) $\Rightarrow$ (a) and the
equivalence between (b) and (c) are obvious.
\end{proof}




\section{The sequence $\LL_J^*(I)$ and $J$-non-degeneracy}\label{Lstar}

Along this section, we will suppose that $J$ is a monomial ideal of finite colength of $\O_n$.
Let $I_1,\dots, I_n$ be a family of $n$ ideals of $\O_n$ such that $\sigma(I_1,\dots, I_n)<\infty$ and let $I$ be another ideal of $\O_n$.
Then, the pair $(I;I_1,\dots, I_n)$ is said to be {\it $J$-linked} when there exists some $i_0$
for which $(I_1,\dots,I_{i_0-1}, I,I_{i_0+1},\dots, I_n)$ is $J$-non-degenerate
and $\nu_J(I_{i_0})=\max\{\nu_J(I_1),\dots, \nu_J(I_n)\}$.

\begin{thm}\label{mainBERevMC}\cite[3.11]{BENewton}
Let $I_1,\dots, I_n$ be ideals of $\O_n$ such that $\sigma(I_1,\dots, I_n)<\infty$.
Let $\{\B_r\}_{r\geq 0}$ be the Newton filtration induced by $\Gamma_+(J)$.
Let $r_i=\nu_J(J_i)$, for all $i=1,\dots, n$.
If $(I_1,\dots, I_n)$ is $J$-non-degenerate and $I$ is a proper ideal of $\O_n$, then
$$
\LL_I(I_1,\dots, I_n)\leq \LL_I(\B_{r_1},\dots, \B_{r_n}) \leq  \frac{\max\{\nu_J(J_1),\dots, \nu_J(J_n)\}}{\nu_J(I)}
$$
and the above inequalities turn into equalities if $(I; I_1,\dots, I_n)$ is $J$-linked.
\end{thm}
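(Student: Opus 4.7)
The statement packages three separate claims---two upper-bound inequalities and a matching-equality claim under $J$-linkage. The upper bounds come from an interplay between monotonicity of the \L ojasiewicz exponent (Proposition \ref{lower}) and the multiplicative behaviour of the Newton filtration $\{\B_r\}_{r\geq 0}$, while the equality case exploits the explicit formula for $\sigma$ coming from $J$-non-degeneracy.

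\textbf{The two inequalities.} Because $r_i=\nu_J(I_i)$, the inclusion $I_i\subseteq \B_{r_i}$ is immediate and $\nu_J(\B_{r_i})=r_i$. Inequality \eqref{desref} applied to $(\B_{r_1},\dots,\B_{r_n})$ together with the $J$-non-degeneracy of $(I_1,\dots,I_n)$ gives
$$\sigma(\B_{r_1},\dots,\B_{r_n})\geq \frac{r_1\cdots r_n}{M_J^n}e(J)=\sigma(I_1,\dots,I_n);$$
monotonicity of mixed multiplicities under the inclusions $I_i\subseteq \B_{r_i}$ supplies the reverse inequality, so the two coincide, and Proposition \ref{lower} yields the first inequality. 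For the second, write $r=\max_i r_i$ and $d=\nu_J(I)$. Because the filtrating map $\phi_J$ is positively homogeneous and linear on every cone over a compact face of $\Gamma_+(J)$, one checks that $\overline{\B_{r_i}^d}=\overline{\B_{dr_i}}$; from $I\subseteq \B_d$ one gets $I^r\subseteq \B_{dr}\subseteq \B_{dr_i}=\overline{\B_{r_i}^d}$. Mixed multiplicities being integral-closure invariant, this yields $\sigma(\B_{r_1}^d+I^r,\dots,\B_{r_n}^d+I^r)=\sigma(\B_{r_1}^d,\dots,\B_{r_n}^d)$, and taking $s=d$, $t=r$ in \eqref{LJmillor} proves $\LL_I(\B_{r_1},\dots,\B_{r_n})\leq r/d$.

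\textbf{Equality case and main obstacle.} Assume $(I;I_1,\dots,I_n)$ is $J$-linked at position $i_0$, with $r_{i_0}=\max_i r_i$. The $J$-non-degeneracy of both tuples gives explicit values
$$\sigma(I_1,\dots,I_n)=\tfrac{r_1\cdots r_n}{M_J^n}e(J),\qquad \sigma(I_1,\dots,I_{i_0-1},I,I_{i_0+1},\dots,I_n)=\tfrac{(\prod_{i\neq i_0} r_i)\,d}{M_J^n}e(J),$$
so it remains only to establish the lower bound $\LL_I(I_1,\dots,I_n)\geq r_{i_0}/d$. This is the hard step: the upper bounds come cheaply from monotonicity and combinatorial identities, whereas pinning down the matching lower bound must pass through the definition \eqref{LJmillor}. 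My proposed route is to argue by contradiction---assume $t/s<r_{i_0}/d$ admissible in \eqref{LJmillor}, pick a sufficiently general sequence of $n-1$ elements of $J$ to reduce modulo it (as in the superficial-element trick used in the proof of Proposition \ref{eiei-1}), and use the two explicit $\sigma$-values above to force a numerical contradiction.
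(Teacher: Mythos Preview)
The paper does not prove this theorem at all: it is quoted verbatim from \cite[3.11]{BENewton} and used as a black box, so there is no ``paper's own proof'' to compare against. What I can do is assess your argument on its merits.

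Your treatment of the two inequalities is correct. For the first, the chain $I_i\subseteq\B_{r_i}$ together with \eqref{desref} and the $J$-non-degeneracy of $(I_1,\dots,I_n)$ indeed pins $\sigma(\B_{r_1},\dots,\B_{r_n})$ to the same value as $\sigma(I_1,\dots,I_n)$, and Proposition~\ref{lower} then gives $\LL_I(I_1,\dots,I_n)\leq\LL_I(\B_{r_1},\dots,\B_{r_n})$. For the second, your identity $\overline{\B_{r_i}^{\,d}}=\overline{\B_{dr_i}}$ (from positive homogeneity of $\phi_J$) and the inclusion $I^r\subseteq\B_{dr}\subseteq\B_{dr_i}$ are exactly what is needed to witness $r/d$ in \eqref{LJmillor}.

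The gap is in the equality case. Your proposed route---reduce modulo $n-1$ sufficiently general elements of $J$ as in Proposition~\ref{eiei-1}---does not fit the situation: that trick is tailored to mixed multiplicities of the shape $e_i(I,J)$ where $n-i$ slots are occupied by the \emph{same} ideal $J$, whereas here the slots hold $I_1,\dots,I_n$, none of which need equal $J$. There is no evident way to make a superficial sequence in $J$ kill the right part of the data.

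Fortunately the two explicit $\sigma$-values you already wrote down suffice, via plain monotonicity of mixed multiplicities, with no reduction step. Suppose $\sigma(I_1^s+I^t,\dots,I_n^s+I^t)=\sigma(I_1^s,\dots,I_n^s)$. Replacing the $i_0$-th slot $I_{i_0}^s+I^t$ by the smaller ideal $I^t$, and each remaining slot $I_j^s+I^t$ by the smaller ideal $I_j^s$, only increases the mixed multiplicity, so
\[
\sigma(I_1^s,\dots,I_n^s)\;\leq\;\sigma\big(I_1^s,\dots,I_{i_0-1}^s,\,I^t,\,I_{i_0+1}^s,\dots,I_n^s\big)
= s^{n-1}t\,\sigma(I_1,\dots,I_{i_0-1},I,I_{i_0+1},\dots,I_n).
\]
Substituting the two $J$-non-degenerate values gives $s^n r_1\cdots r_n\leq s^{n-1}t\,(\prod_{j\neq i_0}r_j)\,d$, hence $t/s\geq r_{i_0}/d$. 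Taking the infimum over admissible $(t,s)$ yields $\LL_I(I_1,\dots,I_n)\geq r_{i_0}/d=\max_i r_i/d$, and the upper bounds already established force equality throughout. Replace your superficial-element paragraph with this computation and the proof is complete.
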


\begin{cor}\label{corolBE}
Let $(I_1,\dots, I_n)$ be a $J$-non-degenerate $n$-tuple of ideals of $\O_n$.
Then
\begin{equation}\label{encaixa}
\LL_J(I_1,\dots, I_n)=\frac{\max\{\nu_J(J_1),\dots, \nu_J(J_n)\}}{M_J}.
\end{equation}
\end{cor}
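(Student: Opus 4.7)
The plan is to realize equation \eqref{encaixa} as the $I=J$ case of Theorem \ref{mainBERevMC}, once we verify that the hypothesis of $J$-linkedness is satisfied for the tuple $(J;I_1,\dots,I_n)$.

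First I would pick an index $i_0\in\{1,\dots,n\}$ for which $\nu_J(I_{i_0})=\max\{\nu_J(I_1),\dots,\nu_J(I_n)\}$. The requirement for $(J;I_1,\dots,I_n)$ to be $J$-linked then amounts to showing that the $n$-tuple $(I_1,\dots,I_{i_0-1},J,I_{i_0+1},\dots,I_n)$ is $J$-non-degenerate. This is exactly the content of Corollary \ref{Jpasseja}, which lets us swap any one of the entries in a $J$-non-degenerate tuple for $J$ itself and preserve $J$-non-degeneracy. Thus $(J;I_1,\dots,I_n)$ is $J$-linked.

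Next I would apply Theorem \ref{mainBERevMC} with the test ideal chosen to be $I=J$. The theorem then yields
$$
\LL_J(I_1,\dots,I_n)=\frac{\max\{\nu_J(I_1),\dots,\nu_J(I_n)\}}{\nu_J(J)}.
$$
To finish I need to identify $\nu_J(J)=M_J$. This follows directly from the construction of the Newton filtration: for every $k\in\Gamma_+(J)$ one has $\phi_{\Gamma(J)}(k)\geq M_J$ with equality whenever $k\in\Gamma(J)$, and since $J$ is a monomial ideal of finite colength its support meets the Newton boundary $\Gamma(J)$ (e.g.\ at any vertex of $\Gamma_+(J)$). Therefore $\nu_J(J)=M_J$, and substituting this into the displayed equality yields \eqref{encaixa}.

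There is no real obstacle here: the statement is essentially a combination of two earlier results. The only thing to be careful about is that the hypothesis of $J$-linkedness requires one of the $I_j$ to be replaced by the test ideal $I$, not added alongside it, so one must invoke Corollary \ref{Jpasseja} in precisely the slot that achieves the maximum of $\nu_J(I_j)$ in order for the maximum appearing in Theorem \ref{mainBERevMC} to remain unchanged after the substitution.
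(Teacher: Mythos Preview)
Your proof is correct and follows essentially the same route as the paper: verify via Corollary~\ref{Jpasseja} that $(J;I_1,\dots,I_n)$ is $J$-linked, then apply Theorem~\ref{mainBERevMC} with $I=J$. Your additional remarks (the explicit choice of $i_0$, the justification of $\nu_J(J)=M_J$) simply spell out details that the paper leaves implicit.
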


\begin{proof}
By Corollary \ref{Jpasseja}, it follows that $(J; I_1,\dots, I_n)$ is $J$-linked.
Then, (\ref{encaixa}) follows by applying Theorem \ref{mainBERevMC} to $(J; I_1,\dots, I_n)$.
 \end{proof}

To any primitive vector $w=(w_1,\dots, w_n)\in \Z_{\geq 1}^n$, we associate the following ideal of $\O_n$:
\begin{equation}\label{Jw}
J_w=\left\langle x_1^{\frac{w_1\cdots w_n}{w_1}},\dots, x_n^{\frac{w_1\cdots w_n}{w_n}} \right\rangle.
\end{equation}
Corollary \ref{corolBE} says, in particular, that, if $g:(\C^n,0)\to (\C^n,0)$ is a
semi-weighted homogeneous map with respect to $w$ (see \cite[p.\,793]{BENewton}),
then
$$
\LL_{J_w}(g)=\frac{\max\{d_w(g_1),\dots, d_w(g_n)\}}{w_1\cdots w_n},
$$
where $d_w(h)$ denotes the degree of $h$ with respect to $w$, for any $h\in\O_n$; that is,
$d_w(h)=\min\{\langle k,w\rangle: k\in\supp(h)\}$, where $\langle \,,\rangle$ denotes the standard scalar product.
As a consequence, if $f:(\C^n,0)\to (\C,0)$ is a semi-weighted homogeneous function with respect to $w$ (see \cite[p.\,793]{BENewton}) and
if we denote $\min\{w_1,\dots, w_n\}$ by $w_0$, then
$$
\LL_{J_w}(\nabla f)=\frac{d-w_0}{w_1\cdots w_n}.
$$

We recall that, by the main result of \cite{Brz}, if $f:(\C^n,0)\to (\C,0)$ is a semi-weighted homogeneous function such that $d_w(f)=d$, then
$\LL_0(\nabla f)=\frac{d-w_0}{w_0}$, provided that $d\geq 2w_i$, for all $i=1,\dots, n$.

\begin{thm}\label{LstarJnodeg}
Let $g=(g_1,\dots, g_n):(\C^n,0)\to (\C^n,0)$ be an analytic map germ such that
$g$ is $J$-non-degenerate and let $M=M_J$. Let $d_i=\nu_J(g_i)$, for all $i=1,\dots, n$, and let us suppose that $d_1\leq \cdots \leq d_n$.
Then
\begin{equation}\label{formulaLi}
\LL_J^{(i)}(g)\geq \frac{\max\{d_i, M\}}{M}
\end{equation}
for all $i=1,\dots, n-1$. If moreover $\langle g_1,\dots, g_n\rangle\subseteq \overline J$, then $M\leq d_1$ and equality holds in
\textnormal{(\ref{formulaLi})}, that is, $\LL_J^{(i)}(g)=\frac{d_i}{M}$, for all $i=1,\dots, n$.
\end{thm}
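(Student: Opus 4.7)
The plan is to establish the lower bound $\LL_J^{(i)}(g)\geq \max\{d_i,M\}/M$ first, using Corollary \ref{Jpasseja}, Corollary \ref{corolBE} and the monotonicity of mixed \L ojasiewicz exponents from Proposition \ref{lower}, and then, under the hypothesis $I(g)\subseteq\overline{J}$, to obtain the matching upper bound $\LL_J^{(i)}(g)\leq d_i/M$ by combining a direct computation for $i=n$ via Proposition \ref{caractwrtJ} with the Hickel-type characterization in Corollary \ref{HickelwrtJ}.

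For the lower bound, the $J$-non-degeneracy of $g$ means that $(\langle g_1\rangle,\ldots,\langle g_n\rangle)$ is $J$-non-degenerate. Iterating Corollary \ref{Jpasseja} over the last $n-i$ entries, the tuple $\Theta=(\langle g_1\rangle,\ldots,\langle g_i\rangle,J,\ldots,J)$ is $J$-non-degenerate, whence
\[
\sigma(\Theta)=\frac{d_1\cdots d_i\cdot M^{n-i}}{M^n}\,e(J)=\frac{d_1\cdots d_i}{M^i}\,e(J),
\]
which by Proposition \ref{eideg} coincides with $e_i(I(g),J)=\sigma(I(g),\ldots,I(g),J,\ldots,J)$. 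Hence Proposition \ref{lower} applies and, combined with Corollary \ref{corolBE} giving $\LL_J(\Theta)=\max\{d_i,M\}/M$ (as $d_1\leq\cdots\leq d_i$), yields $\LL_J^{(i)}(g)\geq \max\{d_i,M\}/M$.

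Under the hypothesis $I(g)\subseteq\overline{J}$, every $d_j\geq M$, so $M\leq d_1$ and the previous bound becomes $\LL_J^{(i)}(g)\geq d_i/M$ for all $i=1,\dots,n$ (the case $i=n$ following from Corollary \ref{corolBE} applied to $(\langle g_1\rangle,\ldots,\langle g_n\rangle)$ together with monotonicity). For the matching upper bound when $i=n$, I would invoke Proposition \ref{caractwrtJ} to produce $a_1,\dots,a_n,d\in\Z_{\geq 1}$ with $a_jd_j=dM$ and $\overline{\langle g_1^{a_1},\ldots,g_n^{a_n}\rangle}=\overline{J^d}$; since $a_n=\min_j a_j=dM/d_n$, each $g_j^{a_j}\in I(g)^{a_j}\subseteq I(g)^{a_n}$, so $\overline{J^d}\subseteq\overline{I(g)^{a_n}}$, which gives $\LL_J(I(g))\leq d/a_n=d_n/M$ and hence $\LL_J^{(n)}(g)=d_n/M$.

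The main obstacle will be the upper bound $\LL_J^{(i)}(g)\leq d_i/M$ for $1\leq i<n$. By (\ref{eIeJLJI}) and Proposition \ref{eideg},
\[
\prod_{i=1}^n \frac{d_i}{M}=\frac{e(I(g))}{e(J)}\leq\prod_{i=1}^n \LL_J^{(i)}(g),
\]
so, together with the lower bounds $\LL_J^{(i)}(g)\geq d_i/M$, the desired individual equalities are equivalent to the reverse product inequality, i.e., to $I(g)$ being Hickel with respect to $J$. I expect to establish this Hickel property by invoking a characterization along the lines of \cite[Theorem 5.5]{BiviaeIeJ}, applied with $s=1$ to the $J$-non-degenerate $g$ (since then $\overline{I(g)^1}=\overline{\langle g_1,\ldots,g_n\rangle}$ trivially), or alternatively by an induction on $n-i$ that reduces to the case $i=n$ via generic superficial elements of $J$ as in the proof of Proposition \ref{eiei-1}. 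Corollary \ref{HickelwrtJ}(c) then gives $\LL_J^{(i)}(g)=e_i(I(g),J)/e_{i-1}(I(g),J)=d_i/M$ for every $i$, completing the proof.
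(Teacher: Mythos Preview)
Your lower bound is correct and essentially matches the paper's argument: you use the $J$-non-degenerate tuple $(\langle g_1\rangle,\dots,\langle g_i\rangle,J,\dots,J)$ and Proposition~\ref{lower}/Corollary~\ref{corolBE}, while the paper instead picks generic elements $h_{i+1},\dots,h_n\in J$ so that the map $(g_1,\dots,g_i,h_{i+1},\dots,h_n)$ is $J$-non-degenerate; structurally this is the same step. Your treatment of the case $i=n$ via Proposition~\ref{caractwrtJ} and the inclusion $\overline{J^d}\subseteq\overline{I(g)^{a_n}}$ is a valid (and rather elegant) alternative to what the paper does.

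The genuine gap is the upper bound $\LL_J^{(i)}(g)\leq d_i/M$ for $1\leq i<n$. You correctly reduce this to showing that $I(g)$ is Hickel with respect to $J$, but neither of your two proposed routes closes the argument. Invoking a characterization ``along the lines of \cite[Theorem 5.5]{BiviaeIeJ}'' is circular in this paper's logical order: the implication ``$J$-non-degenerate $\Rightarrow$ Hickel with respect to $J$'' is precisely the content of Corollary~\ref{Calp}, which is \emph{derived from} Theorem~\ref{LstarJnodeg}. The alternative ``induction via generic superficial elements of $J$'' does not work as stated either: the available comparison (used in Proposition~\ref{eiei-1}) only yields $\LL_{p(J)}(p(I))\leq \LL_J^{(i)}(I)$, an inequality pointing the wrong way, and there is no notion of $J$-non-degeneracy in the quotient $\O_n/\langle h_{i+1},\dots,h_n\rangle$ to carry the induction. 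Finally, appealing to Corollary~\ref{HickelwrtJ}(c) after the fact is also circular, since condition~(c) is equivalent to (a), which is what you are trying to prove.

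The paper avoids this altogether by a direct multiplicity computation: using Proposition~\ref{decreix}, it suffices to verify $e_i(I^M+J^{d_i},J)=e_i(I^M,J)$. The paper realizes $e_i(I^M+J^{d_i},J)$ as $e(f_1,\dots,f_i,f_{i+1},\dots,f_n)$ for generic $\C$-linear combinations, performs a row reduction on the coefficient matrix so that the first $i$ generators take the form $f'_s=g_s^M+(\text{terms of }\nu_J\geq d_sM)$, and then applies the general inequality~(\ref{desref}) to obtain $e_i(I^M+J^{d_i},J)\geq d_1\cdots d_i\,e(J)=e_i(I^M,J)$; the reverse inequality is trivial. This concrete computation is the missing ingredient in your proposal.
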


\begin{proof}
Let $I=\langle g_1,\dots, g_n\rangle$ and let $i\in\{1,\dots, n-1\}$.
By Proposition \ref{eideg}, we have that
$$
e_i(I, J)=\frac{d_1\cdots d_i}{M^i}e(J).
$$
Moreover, the number on the right of the previous equality can be also interpreted as
$$
\frac{d_1\cdots d_i}{M^i}e(J)=e(g_1,\dots, g_i, h_{i+1},\dots, h_n),
$$
where $(h_{i+1},\dots, h_n)$ is a sufficiently general element of $J\oplus \cdots \oplus J$ such that the
map $(g_1,\dots, g_i,h_{i+1},\dots, h_n)$ is $J$-non-degenerate.
Thus, by Proposition \ref{lower}, we obtain the inequality
\begin{equation}\label{fihj}
\frac{\max\{d_i, M\}}{M}=\LL_J(g_1,\dots g_i, h_{i+1},\dots, h_n)\leq \LL_J(\underbrace{I,\dots, I}_{i}, \underbrace{J,\dots, J}_{n-i})=\LL_J^{(i)}(I),
\end{equation}
where the first equality follows from Corollary \ref{corolBE}. Hence (\ref{formulaLi}) follows.

Let us suppose that $I\subseteq\overline J$. By Proposition \ref{decreix} we have that
\begin{equation}\label{Lisimple2}
\LL_J^{(i)}(I)=\inf\left\{\frac rs: r,s\in\Z_{\geq 1}, r\geq s,\,e_{i}(I^s+J^r, J)=e_{i}(I^s, J)\right\}.
\end{equation}
The inclusion $I\subseteq\overline J$ also implies that $M\leq d_1$. 
We claim that
\begin{equation}\label{Mdi}
e_{i}(I^M+J^{d_i}, J)=e_{i}(I^M, J).
\end{equation}
By (\ref{Lisimple2}), this would imply that $\LL_J^{(i)}(I)\leq \frac{d_i}{M}$
and hence $\LL_J^{(i)}(I)=\frac{d_i}{M}$, by (\ref{fihj}).

Let $k^{(1)},\dots, k^{(r)}\in \Z^n_{\geq 0}$ such that $x^{k^{(1)}},\dots,x^{k^{(r)}}$ is a minimal generating set of $J$.
Hence
$$
\overline{I^M+J^{d_i}}=\overline{\langle g_1^M,\dots, g_n^M, x^{d_ik^{(1)}},\dots,x^{d_ik^{(r)}}\rangle}
$$
(see for instance \cite[Proposition 8.15]{HS} or \cite[Corollary 1.40]{V}). By Proposition \ref{Prop2.9},
there exist generic $\C$-linear combinations $f_1,\dots, f_i$ of $\{g_1^M,\dots, g_n^M, x^{d_ik^{(1)}},\dots,x^{d_ik^{(r)}}\}$
and generic $\C$-linear combinations $f_{i+1},\dots, f_n$ of $\{x^{k^{(1)}},\dots,x^{k^{(r)}}\}$ such that
$\langle f_1,\dots, f_n\rangle$ has finite colength and
\begin{equation}\label{fis}
e_i(I^M+J^{d_i}, J)=e(f_1,\dots, f_i, f_{i+1},\dots, f_n).
\end{equation}
Since the ideals $I$ and $J$ have finite colength, the existence of such elements $f_1,\dots, f_n$ also follows by the theorem of
existence of joint reductions (see \cite[Theorem 1.4]{Rees2} or \cite[Theorem 1.6]{RS}).
Let $B$ denote the column matrix obtained as the
transpose of the matrix
$$
\left[\begin{matrix} g_1^M & \cdots & g_n^M & x^{d_ik^{(1)}} & \cdots & x^{d_ik^{(r)}}
\end{matrix}\right].
$$
Let $A$ denote a matrix of size $i\times (n+r)$ with entries in $\C$ such that
\begin{equation}\label{AB=F}
AB=
\left[\begin{array}{c}
f_1\\
\vdots \\
f_i
\end{array}
\right].
\end{equation}
Since the coefficients of $A$ are generic, we can assume that the submatrix $C$ formed the first
$i$ columns of $A$ is invertible. Therefore, by multiplying both sides of (\ref{AB=F}) by $C^{-1}$, we obtain that
\begin{equation}\label{sep}
\langle f_1,\dots, f_i\rangle=\langle f'_1,\dots, f'_i\rangle
\end{equation}
where
\begin{align*}
f'_1&=g_1^M+\sum_{j=i+1}^n\alpha_{1j}g_j^M+\sum_{\ell=1}^r\beta_{1\ell}x^{d_ik^{(\ell)}}\\
&\hspace{0.25cm}\vdots\\
f'_i&=g_i^M+\sum_{j=i+1}^n\alpha_{ij}g_j^M+\sum_{\ell=1}^r\beta_{i\ell}x^{d_ik^{(\ell)}}
\end{align*}
for some coefficients $\alpha_{sj}$, $\beta_{s\ell}\in \C$, $s=1,\dots, i$, $j=i+1,\dots, n$, $\ell=1,\dots, r$.
Since $d_1\leq \cdots \leq d_n$, we have that $\nu_J(f'_s)=d_sM$, for all $s=1,\dots, i$.
Then, from (\ref{fis}) and (\ref{sep}), we obtain the following:
\begin{align}
e_i(I^M+J^{d_i}, J)&=e(f_1,\dots, f_i, f_{i+1},\dots, f_n)=e(f'_1,\dots, f'_i, f_{i+1},\dots, f_n)\nonumber\\
&\geq\frac{(d_1M)\cdots (d_iM)M^{n-i}}{M^n}e(J)=d_1\cdots d_ie(J) \label{segona}
\end{align}
where the inequality of (\ref{segona}) is an application of (\ref{desref}).

By Proposition \ref{eideg}, we have
\begin{equation}\label{eiIM}
e_i(I^M, J)=M^i e_i(I, J)=d_1\cdots d_ie(J).
\end{equation}
Thus $e_i(I^M+J^{d_i}, J)\geq e_i(I^M, J)$.
Moreover, the inclusion $I^M\subseteq \overline{I^M+J^{d_i}}$ implies that
$e_i(I^M, J)\geq e_i(I^M+J^{d_i}, J)$.
Hence, we conclude that (\ref{Mdi}) is true and hence
the result follows.
\end{proof}


\begin{cor}\label{Calp} 
Let $g=(g_1,\dots, g_n):(\C^n,0)\to (\C^n,0)$ be an analytic map germ such that $g^{-1}(0)=\{0\}$. Let
$d_i=\nu_J(g_i)$, for all $i=1,\dots, n$, and let $M=M_J$. Let us suppose that
$M\leq d_1\leq \cdots \leq d_n$. Then, the following conditions are equivalent:
\begin{enumerate}
\item[(a)] $g:(\C^n,0)\to (\C^n,0)$ is $J$-non-degenerate.
\item[(b)] $\LL_J^{(i)}(g )=\frac{d_i}{M}$, for all $i=1,\dots, n$.
\end{enumerate}
In particular, if $g$ satisfies any of the above conditions, then $g$ is Hickel with respect to $J$.
\end{cor}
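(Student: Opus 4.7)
The implication (a) $\Rightarrow$ (b) is essentially a restatement of Theorem \ref{LstarJnodeg}. The only point to verify is that the hypothesis $M\leq d_1$ translates into the inclusion $\langle g_1,\dots,g_n\rangle\subseteq \overline J$ needed there. This follows from the general fact that $\Gamma_+(J)=\{k\in \R^n_{\geq 0}:\phi_J(k)\geq M_J\}$, so $\nu_J(g_i)\geq M$ means every monomial in the support of $g_i$ lies in $\Gamma_+(J)$, hence $g_i\in \overline J$ since $J$ is monomial. Once this observation is in place, Theorem \ref{LstarJnodeg} yields $\LL_J^{(i)}(g)=d_i/M$ for every $i$, which is (b).

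For (b) $\Rightarrow$ (a), the plan is to sandwich the multiplicity $e(g_1,\dots,g_n)$ between two equal values. On the one side, inequality (\ref{desref}) applied to the $n$-tuple of principal ideals $(\langle g_1\rangle,\dots,\langle g_n\rangle)$ gives
\begin{equation*}
e(g_1,\dots,g_n)=\sigma(\langle g_1\rangle,\dots,\langle g_n\rangle)\geq \frac{d_1\cdots d_n}{M^n}\,e(J),
\end{equation*}
where the first equality holds because $\langle g_1,\dots,g_n\rangle$ has finite colength (since $g^{-1}(0)=\{0\}$) and so $\sigma$ is computed by sufficiently general combinations which are unit multiples of the $g_i$. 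On the other side, the Hickel inequality (\ref{JHickel}) together with the hypothesis (b) yields
\begin{equation*}
\frac{e(g_1,\dots,g_n)}{e(J)}\leq \LL_J^{(1)}(g)\cdots\LL_J^{(n)}(g)=\frac{d_1\cdots d_n}{M^n}.
\end{equation*}
Combining both bounds gives $e(g_1,\dots,g_n)=\frac{d_1\cdots d_n}{M^n}e(J)$, which is exactly the defining equality (\ref{gJnodeg}) of $J$-non-degeneracy.

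Finally, the ``in particular'' assertion is immediate: under (a) and (b) we have simultaneously $e(I)/e(J)=\frac{d_1\cdots d_n}{M^n}$ and $\LL_J^{(1)}(g)\cdots \LL_J^{(n)}(g)=\frac{d_1\cdots d_n}{M^n}$, so equality holds in (\ref{JHickel}) and $g$ (i.e.\ $I=\langle g_1,\dots,g_n\rangle$) is Hickel with respect to $J$. I do not foresee a substantive obstacle here: the whole argument is the combination of two already established inequalities pinching the multiplicity from above and below. The only conceptual point worth stressing is the observation in the first paragraph that under the ordering assumption $M\leq d_1\leq\cdots\leq d_n$ the containment $I\subseteq\overline J$ comes for free, so Theorem \ref{LstarJnodeg} applies in full strength.
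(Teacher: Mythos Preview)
Your proof is correct and follows essentially the same approach as the paper's: the sandwich argument for (b) $\Rightarrow$ (a) via (\ref{desref}) and (\ref{JHickel}) is exactly what the paper does, and your (a) $\Rightarrow$ (b) via Theorem \ref{LstarJnodeg} matches the paper's (which also invokes Corollary \ref{corolBE} separately for $i=n$, but the statement of Theorem \ref{LstarJnodeg} already covers that case). Your explicit justification that $M\leq d_1$ forces $I\subseteq\overline J$ is a welcome clarification of a point the paper leaves as a parenthetical remark.
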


\begin{proof} Let $I=\langle g_1,\dots, g_n\rangle$. The implication (a) $\Rightarrow$ (b) follows as an immediate application of Corollary \ref{corolBE}, when $i=n$, and Theorem \ref{LstarJnodeg}, when $i<n$ (let us remark that the condition $M\leq d_1\leq \cdots \leq d_n$ implies
$\langle g_1,\dots, g_n\rangle\subseteq \overline J$).

Let us assume that (b) holds. Then, by (\ref{desref}) and (\ref{JHickel}), we have the following chain of inequalities:
$$
\frac{d_1\cdots d_n}{M^n}\leq \frac{e(g)}{e(J)}\leq \LL^{(1)}_J(I)\cdots \LL^{(n)}_J(I)=\frac{d_1\cdots d_n}{M^n}.
$$
Hence all inequalities become equalities, in particular (a) follows.

Let us suppose that (a) or (b) holds. Then
$$
\frac{e(g)}{e(J)}=\frac{d_1\dots d_n}{M^n}=\LL^{(1)}_J(I)\cdots \LL^{(n)}_J(I),
$$
where the first equality follows from (\ref{gJnodeg}) and the second follows from item (b). Thus, the ideal $I$ is Hickel with respect to $J$.
\end{proof}

\begin{rem}
\begin{enumerate}
\item Under the hypothesis of Corollary \ref{Calp}, if $g$ is Hickel with respect to $J$ then $g$ is not $J$-non-degenerate in general.
For instance, let us consider the map $g:(\C^2,0)\to (\C^2,0)$ given by $g(x,y)=(x^2+y^3, x^2-y^3)$. We observe that
$e(g)=6$, $\LL^{(1)}_0(g)=2$, $\LL^{(2)}_0(g)=3$, since $I(g)=\langle x^2, y^3\rangle$. Hence $g$ is Hickel with respect to $
\m_2$ but $g$ is not $\m_2$-non-degenerate.

\item Let $w=(w_1,\dots, w_n)\in\Z^n_{\geq 1}$.
It is known that if $g=(\C^n,0)\to (\C^n,0)$ is a weighted homogeneous map with respect to $w$ such that
$g^{-1}(0)=\{0\}$, then $\LL^*_0(g_1,\dots, g_n)$ is not always determined by $w$ and the vector of degrees $d_w(g)$ of $g$ with respect to $w$
(see for instance \cite[Example 4.3]{BF2}). However, as seen in Corollary \ref{Calp},
all numbers of the sequence $\LL^*_{J_w}(g_1,\dots, g_n)$ depend only on $w$ and $d_w(g)$, where $J_w$ is the ideal of $\O_n$ defined in (\ref{Jw}).
\end{enumerate}
\end{rem}


\section{The sequence $\LL_J^*(I)$ when $I$ and $J$ are monomial ideals}\label{Lstarmonomial}

Let $\sL\subseteq\{1,\dots, n\}$, $\sL\neq \emptyset$. Let $\vert \sL\vert$ denote the number of elements of $\sL$.
If $\K=\R$ or $\C$, then we denote by $\K^n_\sL$ the set of those $k\in\K^n$ such that $k_i=0$, for all $i\notin \sL$.
If $h\in\O_n$, $h\neq 0$, and $h=\sum_ka_kx^k$ is the Taylor expansion of $h$ around the origin, then we denote by $h^\sL$
the sum of those $a_kx^k$ such that $k\in \supp(h)\cap\R^n_\sL$.
We set $h^\sL=0$ if $\supp(h)\cap\R^n_\sL=\emptyset$.

Given an ideal $I$ of $\O_n$, we denote by $I^\sL$
the ideal of $\O_{\vert \sL\vert}$ generated by all elements $h^\sL$ such that $h\in I$.

Let us suppose that $\sL=\{i_1,\dots, i_r\}$, where $1\leq i_1<\cdots <i_r\leq n$.
Let $\pi_\sL:\C^r\to \C^n$ be the embedding defined by $\pi_\sL(x_{i_1},\dots, x_{i_r})=(y_1,\dots, y_n)$, where
$y_i=0$, if $i\notin\sL$, and $y_i=x_i$, if $i\in \sL$.
Let $\pi_\sL^*:\O_n\to \O_r$ be the morphism given by $\pi_\sL^*(h)=h\circ \pi_\sL$, for all $h\in \O_n$.
We observe that $h^\sL=\pi^*_\sL(h)$, for any $h\in \O_n$. This implies that
$I^\sL=\pi^*_\sL(I)$, for any ideal $I$ of $\O_n$.

\begin{lem}\label{restrict} Let $I$ and $J$ be ideals of $\O_n$ such that $\V(I)\subseteq \V(J)$. Let $\sL\subseteq\{1,\dots, n\}$, $\sL\neq\emptyset$. Then
$\LL_{J^\sL}(I^\sL)$ exists and
\begin{equation}\label{LIJL}
\LL_J(I)\geq \LL_{J^\sL}(I^\sL).
\end{equation}
\end{lem}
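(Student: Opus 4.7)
The plan is to deduce the lemma from the arc characterization of the \L ojasiewicz exponent given in Lemma \ref{byarcs}. The key point is that pulling an arc in $\C^{\vert\sL\vert}$ back along the embedding $\pi_\sL$ produces an arc in $\C^n$, and pullback of functions is compatible with the restriction operation $h \mapsto h^\sL$.

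First I would verify that $\LL_{J^\sL}(I^\sL)$ is defined, i.e., that $\V(I^\sL) \subseteq \V(J^\sL)$. If $x$ is a point in the germ at $0$ of $\V(I^\sL) = \V(\pi_\sL^*(I))$, then $\pi_\sL(x) \in \V(I) \subseteq \V(J)$, so $x \in \pi_\sL^{-1}(\V(J)) = \V(J^\sL)$, and hence $\LL_{J^\sL}(I^\sL)$ exists by \cite{LT}.

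Next I would set up the arc comparison. Let $\psi:(\C,0)\to(\C^{\vert\sL\vert},0)$ be any non-zero analytic curve and define $\varphi = \pi_\sL\circ \psi:(\C,0)\to(\C^n,0)$, which is again non-zero since $\pi_\sL$ is an embedding. For every $h\in \O_n$ one has the identity
\begin{equation*}
\varphi^*(h) = h\circ \pi_\sL \circ \psi = \psi^*\bigl(\pi_\sL^*(h)\bigr) = \psi^*(h^\sL).
\end{equation*}
Taking minima over generators, this yields $\ord(\varphi^*(I)) = \ord(\psi^*(I^\sL))$ and $\ord(\varphi^*(J)) = \ord(\psi^*(J^\sL))$. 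Applying Lemma \ref{byarcs} to $\varphi$ gives
\begin{equation*}
\frac{\ord(\psi^*(I^\sL))}{\ord(\psi^*(J^\sL))} = \frac{\ord(\varphi^*(I))}{\ord(\varphi^*(J))} \leq \LL_J(I),
\end{equation*}
and taking the supremum over all admissible $\psi$ (those with $\psi^*(J^\sL)\neq 0$, equivalently $\varphi^*(J)\neq 0$) yields $\LL_{J^\sL}(I^\sL) \leq \LL_J(I)$ by another application of Lemma \ref{byarcs}, now in $\C^{\vert\sL\vert}$.

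I expect no serious obstacle, since the argument is essentially formal once Lemma \ref{byarcs} is in hand; the only mild subtlety is to confirm that restricting the arc supremum to curves $\varphi$ of the form $\pi_\sL\circ\psi$ (rather than arbitrary arcs in $\C^n$) cannot give a larger value than $\LL_J(I)$, which is immediate because we are bounding the smaller supremum by the larger one. A brief remark should also note that if $\psi^*(J^\sL)=0$ then $\varphi$ lies in $\V(J)$, hence in $\V(I)$, so $\psi^*(I^\sL)=0$ as well and such arcs contribute nothing to either supremum.
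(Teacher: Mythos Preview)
Your argument via the arc description (Lemma \ref{byarcs}) is correct in its main line and gives a valid proof of the inequality. The paper proceeds differently: it uses the integral-closure characterization (\ref{LJIintegral2}) together with the persistence of integral closure under the ring morphism $\pi_\sL^*$, deducing from $J^q\subseteq\overline{I^p}$ that $(J^\sL)^q\subseteq\overline{(I^\sL)^p}$. The paper does remark afterwards that the result also follows directly from the analytic formulation (\ref{Lojanalytic}), which is essentially the viewpoint you have taken. Your route is slightly more hands-on and makes the geometric content (restriction of arcs to coordinate subspaces) explicit; the paper's route is more algebraic and avoids any discussion of degenerate arcs.

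One small correction: your closing remark has the inclusion reversed. From $\V(I)\subseteq\V(J)$ one cannot conclude that an arc lying in $\V(J)$ lies in $\V(I)$; the implication goes the other way. Fortunately this remark is not needed: if $\psi^*(J^\sL)=0$ then the quotient $\ord(\psi^*(I^\sL))/\ord(\psi^*(J^\sL))$ is either $0$ (when $\psi^*(I^\sL)\neq 0$) or of the form $\infty/\infty$, and in either case such arcs do not raise the supremum defining $\LL_{J^\sL}(I^\sL)$. So you can simply drop that sentence, or replace it by the observation just made.
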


\begin{proof}
Since $\V(I)\subseteq \V(J)$, we have that $\LL_J(I)$ exists (see \cite{LT} or \cite{Tresonances}).
By (\ref{LJIintegral2}), let $p,q\in\Z_{\geq 1}$ such that $\overline{J^q}\subseteq \overline{I^p}$. Applying $\pi^*_\sL$ to both sides of this inclusion, we obtain that
$$
\pi^*_\sL(J^q)\subseteq\pi^*_\sL(\overline{J^q})\subseteq \pi^*_\sL(\overline{I^p})\subseteq\overline{\pi^*_\sL(I^p)},
$$
where the last inclusion follows from the persistence property of the integral closure of ideals (see \cite[p.\,2]{HS}).
Therefore, by relation (\ref{LJIintegral2}), inequality (\ref{LIJL}) follows.
\end{proof}

Alternatively, the previous result also arises as a direct consequence of the original formulation of \L ojasiewicz exponents by means of analytic inequalities
(see (\ref{Lojanalytic})).



Here we recall a definition from \cite{BiviaPLMS} (which in turn is very similar to \cite[Definition 3.1]{BiviaMRL}).

\begin{defn}\label{ndsequence}
Let $g_1,\dots, g_p\in \O_n$, where $p\leq n$. Let
$\Gamma_+$ denote the Minkowski sum $\Gamma_+(g_1)+\cdots
+\Gamma_+(g_p)$. Let $\Delta$ be a compact face of $\Gamma_+$.
The face $\Delta$ is univocally expressed as $\Delta=\Delta_1+\cdots+\Delta_p$, where $\Delta_i$
is a compact face of $\Gamma_+(g_i)$, for all $i=1,\dots, p$. We
say that the sequence $g_1,\dots, g_p$ satisfies the {\it
$(B_\Delta)$ condition} when
$$
\big\{x\in\C^n:
{(g_1)}_{\Delta_1}(x)=\cdots={(g_p)}_{\Delta_p}(x)=0\big\}
\subseteq\{x\in\C^n: x_1\cdots x_n=0\}.
$$
We say that $g_1,\dots, g_p$ is a {\it non-degenerate sequence} when the following conditions hold:
\begin{enumerate}
\item[(a)] the ring $\O_n/\langle g_1,\dots, g_p\rangle$ has dimension $n-p$;
\item[(b)] $g_1,\dots, g_p$ satisfy the $(B_\Delta)$ condition, for all
compact faces $\Delta$ of $\Gamma_+$ with $\dim(\Delta)\leq
p-1$.
\end{enumerate}
\end{defn}

Let $J$ be a monomial ideal of $\O_n$ of finite colength, $n\geq 2$, and
let $i\in\{0,1,\dots, n-1\}$. We denote by
$\G_i(J)$ the family of maps $(g_{i+1},\dots, g_n):(\C^n,0)\to (\C^{n-i},0)$ whose components constitute a non-degenerate sequence
and $\supp(g_j)=\v(\Gamma_+(J))$, for all $j=i+1,\dots, n$.

\begin{prop}\label{nodegmult}
Let $I_1,\dots, I_n$ be monomial ideals of $\O_n$ of finite colength. Let
$g_1,\dots, g_n\in\O_n$ such that $\Gamma_+(g_i)=\Gamma_+(I_i)$, for all $i=1,\dots, n$.
The following conditions are equivalent:
\begin{enumerate}
\item[(a)] $\langle g_1,\dots, g_n\rangle$ has finite colength and $e(g_1,\dots, g_n)=e(I_1,\dots, I_n)$;
\item[(b)] the sequence $g_1,\dots, g_n$ is non-degenerate.
\end{enumerate}
\end{prop}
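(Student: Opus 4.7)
The strategy is to exploit the classical fact that both mixed multiplicities of monomial ideals of finite colength and Samuel multiplicities of non-degenerate sequences of elements of $\O_n$ admit a common combinatorial expression as a normalized mixed volume. Concretely, I aim to identify both $e(g_1,\dots, g_n)$ and $e(I_1,\dots, I_n)$ (under the respective hypothesis of each implication) with the quantity
\[
n!\,V_n\bigl(\R^n_{\geq 0}\setminus\Gamma_+(I_1),\dots,\R^n_{\geq 0}\setminus\Gamma_+(I_n)\bigr).
\]

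For the implication (b) $\Rightarrow$ (a): item (a) of Definition \ref{ndsequence} with $p=n$ directly gives $\dim(\O_n/\langle g_1,\dots,g_n\rangle)=0$, so the ideal has finite colength. To obtain the equality of multiplicities, I would combine two ingredients: the Bernstein--Kushnirenko--Khovanskii type theorem in the local analytic setting due to Saia \cite{S} (see also \cite{BFS,BiviaMRL}), which computes the Samuel multiplicity of a non-degenerate sequence as the normalized mixed volume above (with $\Gamma_+(g_i)$ in place of $\Gamma_+(I_i)$); and the Teissier--Risler formula \cite{Cargese} expressing the mixed multiplicity of monomial ideals of finite colength as the same mixed volume. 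Since $\Gamma_+(g_i)=\Gamma_+(I_i)$ for every $i$, both numbers coincide.

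For the implication (a) $\Rightarrow$ (b): assume $\langle g_1,\dots,g_n\rangle$ has finite colength and $e(g_1,\dots,g_n)=e(I_1,\dots,I_n)$. By Proposition \ref{Prop2.9}, sufficiently general elements $(h_1,\dots,h_n)\in I_1\oplus\cdots\oplus I_n$ realize $e(h_1,\dots,h_n)=e(I_1,\dots,I_n)$, and a Bertini-type argument on the coefficient space shows that these generic $h_i$ can be taken with $\Gamma_+(h_i)=\Gamma_+(I_i)$ and forming a non-degenerate sequence, since each failure of a $(B_\Delta)$ condition cuts out a proper Zariski closed subset of the space of coefficients supported on the vertices of $\Gamma_+(I_i)$. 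By the already proven implication (b) $\Rightarrow$ (a), the multiplicity of such a generic non-degenerate sequence equals the common mixed-volume value. To transfer this to $(g_1,\dots,g_n)$, I would argue by contradiction: if some $(B_\Delta)$ condition failed for $(g_1,\dots,g_n)$, then $e(g_1,\dots,g_n)$ would strictly exceed the mixed-volume value, contradicting the hypothesised equality.

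The main technical obstacle is the last step, namely establishing that any violation of a $(B_\Delta)$ condition strictly increases the Samuel multiplicity beyond the mixed-volume bound. This is the heart of the Kushnirenko/Saia theory and is typically handled by passing to the toric modification of $(\C^n,0)$ associated with a simplicial refinement of the dual fan of $\Gamma_+(g_1)+\cdots+\Gamma_+(g_n)$: along each exceptional divisor, the condition $(B_\Delta)$ encodes the transversality of the pullbacks of the $g_i$, and its failure contributes a strictly positive excess to the intersection multiplicity computed on the toric variety. I would cite this machinery via \cite{S} or the formulation of Theorem \ref{thm3.3BFS} that is already in use in the paper.
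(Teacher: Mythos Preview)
Your proposal is correct and tracks the same mathematics that the paper invokes. The paper's own proof is a one-line citation of \cite[Theorem 5.5]{BiviaPLMS} and \cite[Proposition 5.4]{BiviaPLMS}; what you have written is essentially a sketch of the content of those results, namely that for elements $g_1,\dots,g_n$ with $\Gamma_+(g_i)=\Gamma_+(I_i)$ one always has $e(g_1,\dots,g_n)\geq n!\,V_n$ (the normalized mixed volume of the subdiagram regions), with equality precisely when the sequence is non-degenerate, while $e(I_1,\dots,I_n)=n!\,V_n$ holds unconditionally for monomial ideals of finite colength.

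Two minor remarks on attribution and streamlining. First, the local Bernstein--Kushnirenko formula for \emph{sequences} and its converse (strict excess under failure of some $(B_\Delta)$) is exactly what \cite{BiviaPLMS} establishes; Saia \cite{S} treats Newton non-degenerate \emph{ideals}, and Theorem \ref{thm3.3BFS} concerns $J$-non-degeneracy of maps, which is a different (though related) notion from Definition \ref{ndsequence}. So the cleanest route is to cite \cite{BiviaPLMS} directly, as the paper does. Second, your detour through generic $(h_1,\dots,h_n)\in I_1\oplus\cdots\oplus I_n$ in the proof of (a) $\Rightarrow$ (b) is unnecessary once you have the two ingredients above: since $\Gamma_+(g_i)=\Gamma_+(I_i)$ forces $g_i\in\overline{I_i}$, monotonicity already gives $e(g_1,\dots,g_n)\geq e(I_1,\dots,I_n)=n!\,V_n$, and the strict-excess statement under degeneracy finishes the argument immediately.
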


\begin{proof}
It follows as an immediate application of \cite[Theorem 5.5]{BiviaPLMS} and \cite[Proposition 5.4]{BiviaPLMS}.
\end{proof}


\begin{rem} We point out that $\G_0(J)$ consists of those maps $g=(g_1,\dots, g_n):(\C^n,0)\to (\C^n,0)$ such that $g_1,\dots, g_n$ generate
a reduction of $J$ (see \cite[Proposition 3.6]{BiviaMRL}) and $\supp(g_j)=\v(\Gamma_+(J))$, for
all $j=1,\dots, n$. Moreover $\G_{n-1}(J)$ is formed by the functions of $J$ whose support is equal to $\v(\Gamma_+(J))$.
\end{rem}

With the aim of simplifying the notation, if $g=(g_1,\dots,g_p):(\C^n,0)\to (\C^p,0)$ is an analytic map
and $I$ is any ideal of $\O_n$, then we will denote the image of $I$ in the quotient ring $\O_n/\langle g_1,\dots, g_p\rangle$
by $I_g$.

\begin{lem}\label{aquocients}
Let $I_1,\dots, I_i$ be ideals of $\O_n$, $n\geq 2$, where $i\in\{1,\dots, n-1\}$. Let $g_{i+1},\dots, g_{n}\in\O_n$ such that the multiplicity
$\sigma(I_1,\dots, I_{i}, g_{i+1},\dots, g_n)$ is finite.
Let $g=(g_{i+1},\dots, g_n)$. Then $\sigma((I_1)_g,\dots, (I_i)_g)<\infty$ and
$$
\sigma(I_1,\dots, I_{i}, g_{i+1},\dots, g_n)=\sigma((I_1)_g,\dots, (I_i)_g).
$$
\end{lem}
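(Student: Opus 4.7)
The plan is to reduce both Rees multiplicities to ordinary Samuel multiplicities of parameter ideals via Proposition \ref{Prop2.9}, and then identify these by exploiting the Cohen--Macaulay property of $\O_n$.

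First, I would dispose of the degenerate case in which some $g_j$ is a unit (then $\O_n/\langle g_{i+1}, \ldots, g_n\rangle$ is trivial and both Rees multiplicities vanish), so we may assume $g_{i+1}, \ldots, g_n\in\m_n$. Because the hypothesis guarantees $\sigma(I_1, \ldots, I_i, g_{i+1}, \ldots, g_n)<\infty$, Proposition \ref{Prop2.9} furnishes a non-empty Zariski-open set $V_1$ of tuples $(f_1, \ldots, f_i)\in I_1\oplus \cdots\oplus I_i$ such that $\mathfrak{a}:=\langle f_1, \ldots, f_i, g_{i+1}, \ldots, g_n\rangle$ has finite colength in $\O_n$ and
$$
\sigma(I_1, \ldots, I_i, g_{i+1}, \ldots, g_n)=e(f_1, \ldots, f_i, g_{i+1}, \ldots, g_n).
$$
Here, a sufficiently general element of the single-generator ideal $\langle g_j\rangle$ is a unit multiple $u_jg_j$; since $\langle u_jg_j\rangle=\langle g_j\rangle$ and Samuel multiplicity depends only on the ideal, $g_{i+1}, \ldots, g_n$ themselves may be kept unchanged on the right.

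Passing to the quotient $R:=\O_n/\langle g_{i+1}, \ldots, g_n\rangle$, the image ideal $\langle (f_1)_g, \ldots, (f_i)_g\rangle$ inherits finite colength from $\mathfrak{a}$, and combined with the Krull bound $\dim R\geq n-(n-i)=i$, this forces $\dim R=i$. A second application of Proposition \ref{Prop2.9}, now in the $i$-dimensional ring $R$, yields a non-empty Zariski-open set $V_2$ of tuples $(f_1, \ldots, f_i)$ producing $\sigma((I_1)_g, \ldots, (I_i)_g)<\infty$ and
$$
\sigma((I_1)_g, \ldots, (I_i)_g)=e((f_1)_g, \ldots, (f_i)_g;\, R).
$$
Since the image in $R$ of a generic tuple in $I_1\oplus \cdots\oplus I_i$ remains generic (the residues of the $u_{jk}$ in $\O_n/\m_n=\C$ coincide with their residues in $R/\m_R=\C$), the intersection $V_1\cap V_2$ is non-empty, and I may pick a single tuple $(f_1, \ldots, f_i)$ realising both identities simultaneously.

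It remains to verify $e(\mathfrak{a};\O_n)=e((f_1)_g, \ldots, (f_i)_g;R)$. Because $\O_n$ is a regular local ring, and hence Cohen--Macaulay, the $n$-element system of parameters $f_1, \ldots, f_i, g_{i+1}, \ldots, g_n$ is a regular sequence in any order; in particular $g_{i+1}, \ldots, g_n$ is a regular sequence, so $R$ is Cohen--Macaulay of dimension $i$ with parameter sequence $(f_1)_g, \ldots, (f_i)_g$. The standard formula equating the Samuel multiplicity of a parameter ideal with the colength in a Cohen--Macaulay ring then gives both multiplicities as $\ell(\O_n/\mathfrak{a})$, and the desired equality follows. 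The only delicate point I foresee is the simultaneous genericity bookkeeping for the two applications of Proposition \ref{Prop2.9}, which is handled cleanly by intersecting $V_1$ with $V_2$.
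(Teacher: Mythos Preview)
Your proposal is correct and follows essentially the same route as the paper: both reduce via Proposition \ref{Prop2.9} to Samuel multiplicities of parameter ideals, use the Cohen--Macaulay property of $\O_n$ (and hence of the quotient $R$) to identify multiplicity with colength, and invoke Proposition \ref{Prop2.9} a second time in $R$ to conclude. The paper is terser---it writes $e=\ell$ without naming the Cohen--Macaulay input and handles the simultaneous genericity by saying the element ``can be taken'' to realise both equalities---whereas you spell out the intersection $V_1\cap V_2$ and the residue-field compatibility, but these are presentational differences only.
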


\begin{proof}
Let $R=\O_n/\langle g_{i+1},\dots, g_n\rangle$ and let $p:\O_n\to R$ be the natural projection.
By Proposition \ref{Prop2.9}, there exists a sufficiently general element $(h_1,\dots, h_i)\in I_1\oplus \cdots \oplus I_i$
such that $\langle h_1,\dots, h_i, g_{i+1},\dots, g_n\rangle$ is an ideal of finite colength and
$\sigma(I_1,\dots, I_{i}, g_{i+1},\dots, g_n)=e(h_1,\dots,h_{i}, g_{i+1},\dots, g_n)$. Therefore
\begin{align}
\sigma(I_1,\dots, I_{i}, g_{i+1},\dots, g_n)&=e(h_1,\dots,h_{i}, g_{i+1},\dots, g_n)=\ell\left(\frac{\O_n}{\langle h_{1},\dots, h_i, g_{i+1}\dots, g_n\rangle}\right) \nonumber \\
&=\ell\left(\frac{R}{\langle p(h_1),\dots, p(h_i)\rangle}\right)=e\left( p(h_1),\dots, p(h_i)\right)   \nonumber\\
&\geq \sigma((I_1)_g,\dots, (I_i)_g). \label{extra}
\end{align}
Hence we have that $\sigma((I_1)_g,\dots, (I_i)_g)<\infty$ and $\dim \O_n/\langle g_{i+1},\dots, g_n\rangle=i$. Therefore, by Proposition 2.2,
the element $(h_1,\dots, h_i)$ can be taken in such a way that
equality holds in (\ref{extra}). Thus the result follows.
\end{proof}

Let us fix $i\in\{0,1,\dots, n-1\}$. Since $\v(\Gamma_+(J))$ is finite, the elements of $\G_{i}(J)$ are polynomial maps.
In the following result we identify each $g\in \G_{i}(J)$ with the family of coefficients of the components of $g$,
 so we can consider $\G_{i}(J)$ as a subset of a complex vector space of finite dimension. 


\begin{thm}\label{GiJexists}
Let $J$ be a monomial ideal of $\O_n$ of finite colength, $n\geq 2$, and
let $i\in\{0,\dots, n-1\}$. Then
$\G_{i}(J)$ contains a non-empty Zariski open set and
any $(g_{i+1},\dots, g_n)\in \G_{i}(J)$ verifies that
\begin{equation}\label{peratot}
\sigma(I_1,\dots, I_i, g_{i+1},\dots, g_n)=\sigma(I_1,\dots, I_i,J,\dots, J),
\end{equation}
for any family of monomial ideals $I_1,\dots, I_i$ of $\O_n$ such that $\sigma(I_1,\dots, I_i,J,\dots, J)<\infty$.
\end{thm}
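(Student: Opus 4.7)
The plan is to split the proof into two parts: (i) exhibit a non-empty Zariski open subset of $\G_i(J)$ inside the finite-dimensional complex vector space parametrizing tuples $(g_{i+1},\dots, g_n)$ whose components are $\C$-linear combinations of the monomials in $\v(\Gamma_+(J))$; (ii) verify the multiplicity identity \eqref{peratot} for each element of this set. For (i), the defining conditions of $\G_i(J)$ are each Zariski open in the coefficient space: the condition $\supp(g_j)=\v(\Gamma_+(J))$ is the non-vanishing of the vertex coefficients, the codimension-$(n-i)$ condition on $\langle g_{i+1},\dots, g_n\rangle$ is open by upper-semicontinuity of fibre dimension, and each $(B_\Delta)$ condition from Definition \ref{ndsequence} is the non-vanishing of a suitable resultant in the coefficients. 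Non-emptiness follows from \cite[Proposition 3.6]{BiviaMRL}: a generic $\C$-linear combination of the vertices of $\Gamma_+(J)$ yields an $n$-tuple generating a reduction of $J$, which by Proposition \ref{nodegmult} is a non-degenerate sequence; its last $n-i$ entries automatically satisfy the sub-collection of $(B_\Delta)$ conditions required for membership in $\G_i(J)$.

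For part (ii), fix $g=(g_{i+1},\dots, g_n)\in\G_i(J)$ and monomial ideals $I_1,\dots, I_i$ with $\sigma(I_1,\dots, I_i, J,\dots, J)<\infty$. After the standard reduction to the case where each $I_j$ has finite colength by replacing $I_j$ with $I_j+\m_n^r$ and invoking Definition \ref{lasigma}, pick sufficiently general $f_j\in I_j$ with $\Gamma_+(f_j)=\Gamma_+(I_j)$. Two applications of the machinery of Section \ref{preliminars} then give \eqref{peratot}: by Proposition \ref{Prop2.9} applied to $(I_1,\dots, I_i,\langle g_{i+1}\rangle,\dots, \langle g_n\rangle)$ one obtains $e(f_1,\dots, f_i, g_{i+1},\dots, g_n)=\sigma(I_1,\dots, I_i, g_{i+1},\dots, g_n)$, with finiteness of the right-hand side secured via Lemma \ref{aquocients}; meanwhile Proposition \ref{nodegmult}, applied to the $n$-tuple $(f_1,\dots, f_i, g_{i+1},\dots, g_n)$ with $\Gamma_+(f_j)=\Gamma_+(I_j)$ and $\Gamma_+(g_j)=\Gamma_+(J)$, yields $e(f_1,\dots, f_i, g_{i+1},\dots, g_n)=e(I_1,\dots, I_i, J,\dots, J)$. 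Chaining the two equalities proves \eqref{peratot}.

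The principal obstacle is verifying that the extended $n$-tuple $(f_1,\dots, f_i, g_{i+1},\dots, g_n)$ is non-degenerate in the sense of Definition \ref{ndsequence} for generic $f_j$. For each compact face $\Delta=\Delta_1+\cdots+\Delta_n$ of dimension at most $n-1$ of the Minkowski sum $\Gamma_+(I_1)+\cdots+\Gamma_+(I_i)+\Gamma_+(J)+\cdots+\Gamma_+(J)$, the $(B_\Delta)$ condition must hold. When $\dim(\Delta_{i+1}+\cdots+\Delta_n)\leq n-i-1$, the membership $g\in\G_i(J)$ already forces the common zero locus of the $g$-restrictions into $\{x_1\cdots x_n=0\}$, which directly implies $(B_\Delta)$; for the remaining faces, a dimension-count argument shows that generic choices of the $f_j$-coefficients at the vertices of $\Delta_j$ rule out the appearance of common zeros outside the coordinate hyperplanes. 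Controlling this latter case, together with justifying the reduction to the finite-colength setting for the $I_j$, is the technical heart of the argument.
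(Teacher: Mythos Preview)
Your overall strategy matches the paper's: reduce to the finite-colength case, choose generic $f_j\in I_j$, and then combine Proposition~\ref{Prop2.9} with Proposition~\ref{nodegmult} once you know that $(f_1,\dots,f_i,g_{i+1},\dots,g_n)$ is a non-degenerate sequence. The decisive issue is precisely the step you flag as the ``technical heart'': extending the non-degeneracy of $(g_{i+1},\dots,g_n)$ to the full $n$-tuple for generic $f_j$.

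Here the paper does something cleaner than your case split. It invokes \cite[Lemma~5.5]{BiviaPLMS}, which states that if $g_{i+1},\dots,g_n$ is a non-degenerate sequence and $h$ is a sufficiently general $\C$-linear combination of a fixed generating set of a monomial ideal, then $h,g_{i+1},\dots,g_n$ is again non-degenerate. Applying this lemma $i$ times (adjoining $h_i$, then $h_{i-1}$, and so on) gives the non-degeneracy of the full $n$-tuple directly, with no face-by-face analysis. Your proposed dichotomy on $\dim(\Delta_{i+1}+\cdots+\Delta_n)$ is essentially an attempt to reprove this lemma: the case $\dim\leq n-i-1$ is indeed handled by membership in $\G_i(J)$, but in the complementary case your ``dimension-count argument'' is only asserted, not carried out. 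That argument is genuinely non-trivial (one has to control the generic behaviour of $(f_j)_{\Delta_j}$ on the torus-part of the zero locus of the $(g_k)_{\Delta_k}$, uniformly over all relevant faces), and it is exactly what \cite[Lemma~5.5]{BiviaPLMS} packages. So either cite that lemma, or supply the full argument; as written, this step is a gap.

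For part~(i), the paper simply observes that $\G_i(J)$ contains the Newton non-degenerate maps $(\C^n,0)\to(\C^{n-i},0)$ with support $\v(\Gamma_+(J))$ and cites \cite[Lemma~6.11]{BiviaPLMS} for the existence of a non-empty Zariski open set of such maps. Your direct openness argument is fine in spirit, but note that your non-emptiness step (take the last $n-i$ components of a generic reduction of $J$) also requires checking the codimension condition $\dim\O_n/\langle g_{i+1},\dots,g_n\rangle=i$ for those components, which again is most efficiently obtained from the cited lemma rather than re-derived.
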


\begin{proof}

Let us identify $\G_{i}(J)$ with a subset of $\C^{N(n-i)}$, where $N$ is the number of elements of $\v(\Gamma_+(J))$. Hence, $\G_i(J)$ contains the family of those maps
$(\C^n,0)\to (\C^{n-i},0)$ whose support coincides with $\v(\Gamma_+(J))$ and are Newton non-degenerate,
in the sense of \cite[Definition 3.8]{BiviaPLMS}.
Therefore, as a direct consequence of \cite[Lemma 6.11]{BiviaPLMS}, it follows that $\G_{i}(J)$ contains a non-empty Zariski open set.

Let us consider the case $i=0$ of relation (\ref{peratot}).
If $g=(g_1,\dots,g_n)\in \G_0(J)$, then $\langle g_1,\dots,g_n\rangle$ is Newton non-degenerate and $\Gamma_+(\langle g_1,\dots,g_n\rangle)=\Gamma_+(J)$.
In particular, $\overline{\langle g_1,\dots,g_n\rangle}=\overline J$, by \cite[Proposition 3.6]{BiviaMRL}, and thus $e(g_1,\dots,g_n)=e(J)$.

Let us suppose that $i>0$.
Let us fix any $g=(g_{i+1},\dots, g_n)\in \G_i(J)$.
Let $I_1,\dots, I_i$ be monomial ideals of $\O_n$ such that $\sigma(I_1,\dots, I_i, J,\dots, J)<\infty$.
Let us suppose first that $I_j$ has finite colength, for all $j=1, \dots, i$.
By Proposition \ref{Prop2.9}, there exists a sufficiently general element $(h_1,\dots, h_i)\in I_1\oplus \cdots \oplus I_i$
such that
\begin{equation}\label{sigmaIgs1}
\sigma(I_1,\dots, I_i, g_{i+1},\dots, g_n)=e(h_1,\dots, h_i,  g_{i+1},\dots, g_n).
\end{equation}

Since $g_{i+1},\dots, g_n$ is a non-degenerate sequence and $h_{i}$ is a generic $\C$-linear combination of a fixed generating system of
$I_i$, we can suppose, by \cite[Lemma 5.5]{BiviaPLMS}, that $h_i, g_{i+1},\dots, g_n$ is a non-degenerate sequence.
Inductively, we conclude that the elements $h_1,\dots, h_i$ can be chosen in such a way that (\ref{sigmaIgs1}) holds and
$h_1,\dots, h_i, g_{i+1},\dots, g_n$ is a non-degenerate sequence. The latter condition implies, by Proposition \ref{nodegmult}, that
\begin{equation}\label{sigmaIgs2}
e(h_1,\dots,h_i, g_{i+1},\dots, g_n)=e(I_1,\dots, I_i, J,\dots, J).
\end{equation}
By (\ref{sigmaIgs1}) and (\ref{sigmaIgs2}), it follows that $\sigma(I_1,\dots, I_i, g_{i+1},\dots, g_n)=e(I_1,\dots, I_i, J,\dots, J)$.

Let us suppose now that some of the ideals $I_1,\dots, I_i$ has not finite colength.
By the case discussed before, we have that
\begin{equation}\label{amberres}
\sigma(I_1+\m^r,\dots, I_i+\m^r, g_{i+1},\dots, g_n)=e(I_1+\m^r,\dots, I_i+\m^r, J,\dots, J)
\end{equation}
for all $r\in\Z_{\geq 1}$. By hypothesis, for any big enough $r$, the term on the right of (\ref{amberres}) is independent from $r$
and equal to $\sigma(I_1,\dots, I_i, J, \dots, J)$.
So the same happens with the multiplicity on the left of (\ref{amberres}).
Let us remark that, for any big enough $r\in\Z_{\geq 1}$, the following inequalities hold:
\begin{align*}
\sigma(I_1+\m^r,\dots, I_i+\m^r, g_{i+1},\dots, g_n)&\geq e(I_1+\m^r,\dots, I_i+\m^r, g_{i+1}+\m^r,\dots, g_n+\m^r)\\
&\geq e(I_1+\m^r,\dots, I_i+\m^r, J,\dots, J).
\end{align*}
Thus, by (\ref{amberres}), we have that $\sigma(I_1,\dots, I_i, g_{i+1},\dots, g_n)$ is finite and
$$
\sigma(I_1,\dots, I_i, g_{i+1},\dots, g_n)=\sigma(I_1,\dots, I_i, J,\dots, J).
$$
\end{proof}

\begin{prop}\label{LJImax}
Let $I$ and $J$ be monomial ideals of $\O_n$ of finite colength, $n\geq 2$.
Let $i\in\{1,\dots, n-1\}$ and let $\g\in \G_{i}(J)$. Then
\begin{equation}\label{suprem}
\LL_{J_\g}(I_\g)\leq \LL_J^{(i)}(I)
\end{equation}
and equality holds if $I\subseteq \overline J$.
\end{prop}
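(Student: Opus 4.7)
The plan is to view both sides through the Rees-multiplicity formulation (\ref{LJmillor}) of \L ojasiewicz exponents and to build a two-way dictionary between mixed multiplicities in $\O_n$ and ordinary Samuel multiplicities in the quotient ring $R_\g := \O_n/\langle g_{i+1},\dots,g_n\rangle$. The dictionary has two keys: Lemma \ref{aquocients} identifies $\sigma(K_1,\dots,K_i,g_{i+1},\dots,g_n)$ with $\sigma((K_1)_\g,\dots,(K_i)_\g)$, and Theorem \ref{GiJexists} says that replacing the tail slots $g_{i+1},\dots,g_n$ by $J,\dots,J$ does not change the multiplicity, provided $K_1,\dots,K_i$ are monomial of finite colength with the relevant mixed multiplicity finite. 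The ring $R_\g$ has dimension $i$ (since $\g$ is a non-degenerate sequence) and is Cohen-Macaulay because $g_{i+1},\dots,g_n$ is then a regular sequence in the regular ring $\O_n$, hence quasi-unmixed, so the Rees multiplicity theorem applies in $R_\g$.

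To prove (\ref{suprem}), take $(r,s)\in\Z_{\geq 1}^2$ satisfying the defining equality of $\LL_J^{(i)}(I)$ in (\ref{LJmillor}). First I argue $r \ge s$: if $r<s$ then $J^s+J^r=J^r$ and multilinearity of mixed multiplicity ($t$-th power of an ideal in one slot multiplies by $t$) converts the equality, via the dictionary above, to $r^{n-i}e(I_\g^s+J_\g^r)=s^{n-i}e(I_\g^s)$; since $I_\g^s\subseteq I_\g^s+J_\g^r$ gives $e(I_\g^s+J_\g^r)\le e(I_\g^s)$ and $e(I_\g^s)>0$ (because $I$ has finite colength in $\O_n$, so $I_\g$ does in $R_\g$), this forces $r^{n-i}\ge s^{n-i}$, i.e.\ $r \ge s$ as $n>i$, a contradiction. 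With $r\ge s$, $J^s+J^r=J^s$, and the same translation—multilinearity of mixed multiplicity gives a common factor $s^{n-i}$ on both sides, which cancels—yields $e(I_\g^s+J_\g^r)=e(I_\g^s)$ in $R_\g$. The Rees multiplicity theorem then gives $J_\g^r\subseteq \overline{I_\g^s}$, so $\LL_{J_\g}(I_\g)\le r/s$ by (\ref{LJIintegral2}), and taking the infimum yields (\ref{suprem}).

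For the equality when $I\subseteq \overline J$, I would run the dictionary in reverse. Persistence of integral closure under the projection $\O_n \twoheadrightarrow R_\g$ gives $I_\g\subseteq \overline{J_\g}$, hence by Remark \ref{LJgeq1}(a) and (\ref{LJIintegral2}) applied in $R_\g$ we have $\LL_{J_\g}(I_\g)\ge 1$ and the infimum may be tested on pairs $(r,s)$ with $r\ge s$. For such a pair satisfying $e(I_\g^s+J_\g^r)=e(I_\g^s)$, Lemma \ref{aquocients} rewrites this as the equality of $\sigma(I^s+J^r,\dots,I^s+J^r,g_{i+1},\dots,g_n)$ and $\sigma(I^s,\dots,I^s,g_{i+1},\dots,g_n)$; Theorem \ref{GiJexists} (whose hypothesis holds because $I^s+J^r$ and $I^s$ are monomial ideals of finite colength) replaces the $\g$-slots by $J$-slots; finally, multiplying by $s^{n-i}$ via multilinearity and using $J^s+J^r=J^s$ (since $r\ge s$) recovers exactly the equality defining $\LL_J^{(i)}(I)$ at $(r,s)$. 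Hence $\LL_J^{(i)}(I)\le r/s$, and passing to the infimum gives the reverse inequality.

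The main technical friction is bookkeeping the multilinearity/$r$-versus-$s$ split: one must verify that $I^s+J^r$ is a monomial ideal of finite colength for all $r,s$, that the finiteness hypothesis $\sigma(I^s+J^r,\dots,I^s+J^r,J,\dots,J)<\infty$ is available so that Theorem \ref{GiJexists} can be invoked, and that $R_\g$ is genuinely quasi-unmixed so that the Rees multiplicity theorem applies (which is why Cohen-Macaulayness of $R_\g$, following from the regularity of the sequence $g_{i+1},\dots,g_n$, is essential). Once these are in place, the argument is a clean round-trip through the dictionary.
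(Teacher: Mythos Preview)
Your proposal is correct and takes essentially the same approach as the paper: both use the dictionary $e_i(K,J)=e(K_\g)$ for monomial $K$ of finite colength (via Theorem~\ref{GiJexists} and Lemma~\ref{aquocients}) to compare the defining sets of the two \L ojasiewicz exponents through the Rees multiplicity theorem in the quasi-unmixed ring $R_\g$. The paper organizes the forward direction as a single chain of inequalities valid for all $(r,s)$ (so it never needs your preliminary $r\ge s$ case split) and handles the equality case by contradiction rather than by a direct inclusion of defining sets, but these are cosmetic differences; your explicit justification of the quasi-unmixedness of $R_\g$ is a point the paper uses only implicitly.
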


\begin{proof}
Let us fix an index $i\in\{1,\dots, n-1\}$ and let $\g=(g_{i+1},\dots, g_{n})\in \G_{i}(J)$. Then,
given two integers $r,s\geq 1$, we have the following inequalities:
\begin{align}
e_{i}(I^s, J^s)&=s^{n-i}e_{i}(I^s,J)    \nonumber\\
                    &=s^{n-i}\sigma(I^s,\dots, I^s, g_{i+1},\dots, g_n)         &&\textnormal{(by Theorem \ref{GiJexists})}   \nonumber\\
                    &=s^{n-i}e(I^s_\g)   &&\textnormal{(by Lemma \ref{aquocients})}\nonumber\\
                    &\geq s^{n-i}e\left((I^s+J^r)_\g\right) \label{ineqpelmig}\\
                    &=s^{n-i}\sigma\left(I^s+J^r,\dots,I^s+J^r,g_{i+1},\dots, g_n \right)   &&\textnormal{(by Lemma \ref{aquocients})}\nonumber \\
&= s^{n-i} e_{i}(I^s+J^r, J) &&\textnormal{(by Theorem \ref{GiJexists})}   \nonumber\\
&=e_{i}(I^s+J^r, J^s)\geq e_{i}(I^s+J^r, J^s+J^r).\nonumber
\end{align}

Hence, if $e_{i}(I^s, J^s)=e_{i}(I^s+J^r, J^s+J^r)$, then (\ref{ineqpelmig}) becomes an equality. That is, $e(I^s_\g)=e\left((I^s+J^r)_\g\right)$.
By (\ref{LJmillor}), this implies that $\LL_{J_\g}(I_\g)\leq \LL_J^{(i)}(I)$.

Let us suppose that $I\subseteq \overline J$. Hence $I_\g\subseteq (\overline{J})_\g\subseteq \overline{J_\g}$, for all $g\in \G_{i}(J)$, where
the second inclusion follows from the persistence of the integral closure under ring morphisms (see \cite[p.\,2]{HS}).
Thus $1\leq \LL_{J_\g}(I_\g)$ (see Remark \ref{LJgeq1}).

Let us suppose that $\LL_{J_\g}(I_\g)<\LL_J^{(i)}(I)$.
Then there exist some $r,s\in\Z_{\geq 1}$ such that $1\leq \LL_{J_\g}(I_\g) <\frac rs<\LL_J^{(i)}(I)$.
In particular, $r>s$ and the inequality $\frac rs<\LL_J^{(i)}(I)$ means that
\begin{equation}\label{bonic1}
e_{i}(I^s, J^s)>e_{i}(I^s+J^r, J^s+J^r).
\end{equation}
Since $r>s$, we obtain that
\begin{equation}\label{bonic3}
e_{i}(I^s+J^r, J^s+J^r)=e_{i}(I^s+J^r, J^s)=s^{n-i}e_{i}(I^s+J^r, J).
\end{equation}
Moreover
$e_{i}(I^s, J^s)=s^{n-i}e_{i}(I^s, J)$. Hence (\ref{bonic1}) and (\ref{bonic3}) imply that
\begin{equation}\label{bonic2}
e_{i}(I^s, J)>e_{i}(I^s+J^r, J).
\end{equation}

Since $g\in\G_i(J)$, by Lemma \ref{aquocients} and Theorem \ref{GiJexists}, the following equalities hold:
\begin{align}
e_{i}(I^s, J)&=e(I^s,\dots, I^s, g_{i+1},\dots, g_n)=e\left(I^s_\g\right)\label{un}\\
e_{i}(I^s+J^r, J)&=e(I^s+J^r,\dots,I^s+J^r,g_{i+1},\dots, g_n)=e\left({(I^s+J^r)_\g}\right).\label{dos}
\end{align}
The condition $\LL_{J_\g}(I_\g)<\frac rs$ means that $\overline{J^r_\g}\subseteq\overline{I^s_\g}$, which implies that
$e\left(I^s_\g\right)=e\left((I^s+J^r)_\g\right)$. Hence, by (\ref{un}) and (\ref{dos}),
we obtain that $e_{i}(I^s, J)=e_{i}(I^s+J^r, J)$, which contradicts (\ref{bonic2}).
Therefore we have that $\LL_{J_\g}(I_\g)=\LL_J^{(i)}(I)$.
\end{proof}


\begin{cor}\label{Ir}
Let $I$ and $J$ be monomial ideals of $\O_n$ of finite colength, $n\geq 2$. Let us suppose that $I\subseteq \overline J$.
Let $i\in\{1,\dots, n-1\}$. Then
\begin{equation}\label{rLI}
\LL_J^{(i)}(I^r)=r\LL_J^{(i)}(I)
\end{equation}
for any $r\in \Z_{\geq 1}$.

\end{cor}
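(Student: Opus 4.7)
The plan is to apply Proposition \ref{LJImax} to reduce the statement to the classical scaling property of ordinary \L ojasiewicz exponents in the quotient ring.

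First I would fix a map $\g = (g_{i+1},\dots, g_n) \in \G_i(J)$, which exists by Theorem \ref{GiJexists}. Since $I\subseteq \overline J$ is part of the hypothesis, we also have $I^r \subseteq I \subseteq \overline J$, and $I^r$ remains a monomial ideal of finite colength. Thus Proposition \ref{LJImax} applies to both $I$ and $I^r$, yielding
\begin{equation*}
\LL_J^{(i)}(I) = \LL_{J_\g}(I_\g) \qquad \text{and} \qquad \LL_J^{(i)}(I^r) = \LL_{J_\g}\bigl((I^r)_\g\bigr).
\end{equation*}
Since the image in $\O_n/\langle g_{i+1},\dots, g_n\rangle$ commutes with products of ideals, $(I^r)_\g = (I_\g)^r$, so proving \eqref{rLI} reduces to the identity
\begin{equation*}
\LL_{J_\g}\bigl((I_\g)^r\bigr) = r\,\LL_{J_\g}(I_\g).
\end{equation*}

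This is a classical scaling property of \L ojasiewicz exponents, following directly from the characterization \eqref{LJIintegral2}. Indeed, since $\O_n/\langle g_{i+1},\dots, g_n\rangle$ is a quotient of the regular ring $\O_n$ by a regular sequence (it has dimension $i$), it is Cohen-Macaulay, hence quasi-unmixed; moreover $J_\g$ and $I_\g$ both have finite colength, so \eqref{LJIintegral2} applies to both sides. If $J_\g^p \subseteq \overline{I_\g^q}$ then $J_\g^{rp} \subseteq \overline{I_\g^{rq}} = \overline{(I_\g^r)^q}$, which gives $\LL_{J_\g}((I_\g)^r) \leq r\,\LL_{J_\g}(I_\g)$. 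Conversely, any pair $(p,s)$ with $J_\g^p \subseteq \overline{(I_\g^r)^s} = \overline{I_\g^{rs}}$ shows $\LL_{J_\g}(I_\g) \leq p/(rs)$, and taking the infimum yields $r\,\LL_{J_\g}(I_\g) \leq \LL_{J_\g}((I_\g)^r)$.

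Combining the two equalities from Proposition \ref{LJImax} with this scaling identity produces \eqref{rLI}. There is no substantive obstacle here: all the non-trivial work is hidden in Proposition \ref{LJImax}, which already converts the mixed \L ojasiewicz exponent $\LL_J^{(i)}(I)$ into an ordinary \L ojasiewicz exponent in a local ring of dimension $i$, where the homogeneity in powers of the ideal is automatic from the definition.
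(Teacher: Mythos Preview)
Your proof is correct and follows essentially the same approach as the paper: fix $\g\in\G_i(J)$, apply Proposition \ref{LJImax} to both $I$ and $I^r$ (using $I^r\subseteq I\subseteq\overline J$), and then invoke the elementary scaling identity $\LL_{J_\g}((I_\g)^r)=r\,\LL_{J_\g}(I_\g)$, which follows directly from \eqref{LJIintegral2}. The only difference is cosmetic: your detour through quasi-unmixedness of the quotient is unnecessary, since \eqref{LJIintegral2} is the \emph{definition} of the \L ojasiewicz exponent in an arbitrary Noetherian local ring (Remark \ref{LJgeq1}(b)) and the scaling follows from it without any hypothesis on the ring.
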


\begin{proof}
Let us fix an $r\in\Z_{\geq 1}$. The relation $\LL_J(I^r)=r\LL_J(I)$ follows immediately from (\ref{LJIintegral2})
and holds for any pair of ideals $I$ and $J$ of finite colength of any local ring $R$. Let $i\in\{1,\dots, n-1\}$ and
let us fix an element $g\in \G_i(J)$. In particular, $\LL_{J_\g}(I^r_\g)=r\LL_{J_\g}(I_\g)$.
The condition $I\subseteq \overline J$ implies that $I_g^r\subseteq \overline J_g^r\subseteq \overline{J_g^r}$ and
consequently $\LL_{J_\g}(I^r_\g)=\LL_J^{(i)}(I^r)$, by Proposition \ref{LJImax}.
Hence (\ref{rLI}) follows.
\end{proof}


\begin{lem}\label{Lbaixa}
Let $n\geq 2$ and let us fix an index $i\in\{1,\dots, n-1\}$.
Let $f_1,\dots, f_i,g_{i+1},\dots, g_n$ be elements of $\O_n$ generating an ideal of finite colength in $\O_n$.
Let $\g=(g_{i+1},\dots, g_n)$.
If $J$ is any proper ideal of $\O_n$, then
\begin{equation}\label{ineqxula}
\LL_{J_\g}\big( \langle f_1,\dots, f_i \rangle_\g\big)\leq \LL_{J}(f_1,\dots, f_i,g_{i+1},\dots, g_n ).
\end{equation}
\end{lem}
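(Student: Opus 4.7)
The plan is to mimic the proof of Lemma \ref{restrict}, substituting the canonical projection $\pi:\O_n\to R':=\O_n/\langle g_{i+1},\dots,g_n\rangle$ for the coordinate embedding used there. The whole argument will rest on just two ingredients: the integral-closure characterization \eqref{LJIintegral2} of the \L{}ojasiewicz exponent and the persistence of integral closure under ring morphisms.

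Following the convention $e(h_1,\dots,h_r)=e(\langle h_1,\dots,h_r\rangle)$ fixed in Section \ref{preliminars}, I read $\LL_J(f_1,\dots,f_i,g_{i+1},\dots,g_n)$ as $\LL_J(\langle f_1,\dots,f_i,g_{i+1},\dots,g_n\rangle)$; the finite-colength hypothesis makes this number a finite rational. The central step is to fix any $r,s\in\Z_{\geq 1}$ with $J^r\subseteq\overline{\langle f_1,\dots,g_n\rangle^s}$ in $\O_n$, which by \eqref{LJIintegral2} is precisely the defining condition for $\LL_J(\langle f_1,\dots,g_n\rangle)$, and then to apply $\pi$. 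By the persistence property of integral closure (see \cite[p.~2]{HS}),
\begin{equation*}
J_\g^r\;=\;\pi(J^r)\;\subseteq\;\pi\bigl(\overline{\langle f_1,\dots,g_n\rangle^s}\bigr)\;\subseteq\;\overline{\pi\bigl(\langle f_1,\dots,g_n\rangle^s\bigr)}.
\end{equation*}
Since $\pi(g_k)=0$ for $k>i$, the rightmost ideal equals $\overline{\langle f_1,\dots,f_i\rangle_\g^s}$. A second application of \eqref{LJIintegral2}, now inside $R'$, yields $\LL_{J_\g}(\langle f_1,\dots,f_i\rangle_\g)\leq r/s$; taking the infimum over all admissible $(r,s)$ produces \eqref{ineqxula}.

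The only subtle point is the legitimacy of \eqref{LJIintegral2} inside the quotient ring $R'$. Because $\langle f_1,\dots,g_n\rangle$ has finite colength, $f_1,\dots,f_i,g_{i+1},\dots,g_n$ is a system of parameters in the Cohen--Macaulay ring $\O_n$ and hence a regular sequence; thus $R'$ is Cohen--Macaulay of dimension $i$ and, a fortiori, quasi-unmixed, while the radical inclusion $\sqrt{J_\g}\subseteq\sqrt{\langle f_1,\dots,f_i\rangle_\g}$ needed for \eqref{LJIintegral2} to be meaningful follows from the same finite-colength hypothesis. Should one wish to avoid invoking quasi-unmixedness at all, Remark \ref{LJgeq1}(b) adopts \eqref{LJIintegral2} as the definition of the \L{}ojasiewicz exponent in exactly this radical-inclusion situation, which makes the closing step of the argument tautological.
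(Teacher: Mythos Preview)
Your argument is clean, but it rests on a misreading of the notation. In this paper, $\LL_J(f_1,\dots,f_i,g_{i+1},\dots,g_n)$ with exactly $n$ entries does \emph{not} follow the multiplicity convention $e(h_1,\dots,h_r)=e(\langle h_1,\dots,h_r\rangle)$; it denotes the \emph{mixed} \L ojasiewicz exponent $\LL_J(\langle f_1\rangle,\dots,\langle f_i\rangle,\langle g_{i+1}\rangle,\dots,\langle g_n\rangle)$ of Definition~\ref{mixedLoj}. This is unambiguous from the paper's own proof of the lemma, which checks the condition $e(f_1^s,\dots,g_n^s)=e(f_1^s+J^r,\dots,g_n^s+J^r)$ coming from \eqref{LJmillor}, and from the way the lemma is used in Theorem~\ref{essencial}, where the right-hand side is evaluated via Corollary~\ref{corolBE} (a statement about the mixed exponent of an $n$-tuple). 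The same usage already appears in the proof of Theorem~\ref{LstarJnodeg}, line~\eqref{fihj}.

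What you actually prove is $\LL_{J_\g}(\langle f_1,\dots,f_i\rangle_\g)\leq \LL_J(\langle f_1,\dots,g_n\rangle)$, with the ordinary \L ojasiewicz exponent of the full ideal on the right. Since $\langle f_j\rangle\subseteq\langle f_1,\dots,g_n\rangle$ and the two sides have the same Rees multiplicity, Proposition~\ref{lower} gives $\LL_J(\langle f_1\rangle,\dots,\langle g_n\rangle)\leq\LL_J(\langle f_1,\dots,g_n\rangle)$, so your inequality is genuinely \emph{weaker} than the one asserted. The integral-closure trick via \eqref{LJIintegral2} cannot bridge the gap: mixed \L ojasiewicz exponents are defined through equalities of (Rees) mixed multiplicities, not through an inclusion $J^r\subseteq\overline{I^s}$, and there is no analogue of \eqref{LJIintegral2} available for them. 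The paper's proof therefore argues with multiplicities directly, passing between $\O_n$ and the quotient via Lemma~\ref{aquocients}; that transfer of multiplicities, rather than persistence of integral closure, is the key mechanism you are missing.
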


\begin{proof}
Let $r,s\in\Z_{\geq 1}$. Then the following chain of inequalities holds:
\begin{align}
e(f_1^s,\dots, f_i^s,g_{i+1}^s,\dots, g_{n}^s)&=s^ne(f_1,\dots, f_i,g_{i+1},\dots, g_n) \nonumber\\
&=s^ne\big(\langle  f_1,\dots, f_i\rangle_\g\big)   &&\textnormal{(by Lemma \ref{aquocients})} \nonumber \\
&=s^{n-i}e\big(\langle  f_1^s,\dots, f_i^s\rangle_\g\big)    \nonumber    \\
&\geq s^{n-i}e\big(  (f_1^s+J^r)_\g,\dots, (f_i^s+J^r)_\g\big)                                              \label{obtenim}\\
&=s^{n-i}e\big( f_1^s+J^r,\dots, f_i^s+J^r, g_{i+1},\dots, g_n\big)  &&\textnormal{(by Lemma \ref{aquocients})}    \nonumber  \\
&=e\big( f_1^s+J^r,\dots, f_i^s+J^r, g_{i+1}^s,\dots, g_n^s\big)                                                  \nonumber\\
&\geq e\big( f_1^s+J^r,\dots, f_i^s+J^r, g_{i+1}^s+J^r,\dots, g_n^s+J^r\big).                                       \nonumber
\end{align}
If $e(f_1^s,\dots, f_i^s,g_{i+1}^s,\dots, g_{n}^s)=e\big( f_1^s+J^r,\dots, f_i^s+J^r, g_{i+1}^s+J^r,\dots, g_n^s+J^r\big)$,
then we obtain that (\ref{obtenim}) becomes an equality, which is to say that
$$
e\big(\langle f_1^s,\dots,f_i^s\rangle_\g\big)=e\big(\langle f_1^s,\dots,f_i^s\rangle_\g+J^r_\g\big).
$$
In particular, applying Definition \ref{mixedLoj} we obtain inequality (\ref{ineqxula}).
\end{proof}

\begin{thm}\label{essencial}
Let $I$ and $J$ be two monomial ideals of $\O_n$ of finite colength, $n\geq 2$.
Let $K_1,\dots, K_n$ be ideals of $\O_n$ contained in $\overline I$ such that $(K_1,\dots, K_n)$ is $J$-non-degenerate.
Let
$d_i=\nu_J(K_i)$, for all $i=1,\dots, n$, and let us suppose that $d_1\leq \cdots \leq d_n$. Then
$\LL_J(I)\leq \frac{d_n}{M_J}$.
If moreover $I\subseteq \overline J$, then
$$
\LL_J^{(i)}(I)\leq \frac{d_i}{M_J}
$$
for all $i\in\{1,\dots, n-1\}$.
\end{thm}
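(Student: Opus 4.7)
The plan is to reduce both inequalities to Corollary \ref{corolBE} via sufficiently general specializations of the $K_j$'s. I would begin by choosing a sufficiently general $(k_1,\dots,k_n)\in K_1\oplus\cdots\oplus K_n$; by Proposition \ref{Prop2.9}, the ideal $\langle k_1,\dots,k_n\rangle\subseteq\overline I$ has finite colength with $e(k_1,\dots,k_n)=\sigma(K_1,\dots,K_n)$, and by genericity $\nu_J(k_j)=d_j$. Together with the $J$-non-degeneracy of $(K_1,\dots,K_n)$, this forces $(\langle k_1\rangle,\dots,\langle k_n\rangle)$ to be $J$-non-degenerate, and Corollary \ref{corolBE} yields $\LL_J(k_1,\dots,k_n)=d_n/M_J$.

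For the bound $\LL_J(I)\leq d_n/M_J$, I would use a power trick to reduce to Theorem \ref{LstarJnodeg}. Choose a positive integer $t$ with $td_j\geq M_J$ for every $j$ (for instance $t=\lceil M_J/d_1\rceil$). Multiplicativity of the Rees mixed multiplicity gives $\sigma(K_1^t,\dots,K_n^t)=t^n\sigma(K_1,\dots,K_n)$, so $(K_1^t,\dots,K_n^t)$ is $J$-non-degenerate with $\nu_J$-values $td_1,\dots,td_n$, all at least $M_J$. A sufficiently general $(h_1,\dots,h_n)\in K_1^t\oplus\cdots\oplus K_n^t$ then gives a $J$-non-degenerate map with $\nu_J(h_j)=td_j$ and $\langle h_1,\dots,h_n\rangle\subseteq\overline{I^t}\cap\overline J$. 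Theorem \ref{LstarJnodeg} applies directly and yields $\LL_J(\langle h_1,\dots,h_n\rangle)=td_n/M_J$. Combined with $\langle h_1,\dots,h_n\rangle\subseteq\overline{I^t}$ and the monotonicity of the single-ideal Lojasiewicz exponent (smaller ideal gives larger exponent), this produces $\LL_J(I^t)\leq td_n/M_J$. Dividing by $t$ using $\LL_J(I^t)=t\LL_J(I)$ concludes $\LL_J(I)\leq d_n/M_J$.

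For $\LL_J^{(i)}(I)\leq d_i/M_J$ with $i\in\{1,\dots,n-1\}$ and $I\subseteq\overline J$, I would additionally fix a sufficiently general $\g=(g_{i+1},\dots,g_n)\in\G_i(J)$; the set $\G_i(J)$ contains a non-empty Zariski-open subset by Theorem \ref{GiJexists}. Corollary \ref{Jpasseja} applied iteratively shows that $(K_1,\dots,K_i,J,\dots,J)$ is $J$-non-degenerate; a standard intersection-of-Zariski-open-sets argument, together with Proposition \ref{Prop2.9} and Theorem \ref{GiJexists}, allows simultaneously realizing $\g\in\G_i(J)$ and the sigma equality, so that $(\langle k_1\rangle,\dots,\langle k_i\rangle,\langle g_{i+1}\rangle,\dots,\langle g_n\rangle)$ is $J$-non-degenerate with $\nu_J$-values $d_1,\dots,d_i,M_J,\dots,M_J$. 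Since $I\subseteq\overline J$ forces $d_j\geq M_J$ for all $j$, Corollary \ref{corolBE} gives $\LL_J(k_1,\dots,k_i,g_{i+1},\dots,g_n)=d_i/M_J$; Lemma \ref{Lbaixa} then yields $\LL_{J_\g}(\langle k_1,\dots,k_i\rangle_\g)\leq d_i/M_J$; the inclusion $\langle k_1,\dots,k_i\rangle_\g\subseteq\overline{I_\g}$ together with the same monotonicity gives $\LL_{J_\g}(I_\g)\leq d_i/M_J$; and the equality case of Proposition \ref{LJImax} concludes $\LL_J^{(i)}(I)=\LL_{J_\g}(I_\g)\leq d_i/M_J$.

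The main subtleties are twofold: the compatibility of the multi-layered generic choices (handled by intersecting several non-empty Zariski-open subsets of the same irreducible parameter space), and the reduction via the power trick, which elegantly eliminates the $I\subseteq\overline J$ assumption for the first bound by trading it against a quantitative assumption on the exponent $t$.
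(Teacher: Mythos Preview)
Your argument is correct and, for the bounds $\LL_J^{(i)}(I)\leq d_i/M_J$ with $i<n$, follows exactly the paper's route: fix $\g\in\G_i(J)$, use Corollary \ref{Jpasseja} to pass to $(K_1,\dots,K_i,J,\dots,J)$, realize the sigma equality via Theorem \ref{GiJexists} and Proposition \ref{Prop2.9}, and then chain Proposition \ref{LJImax}, Lemma \ref{Lbaixa}, and Corollary \ref{corolBE}.

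For the first bound $\LL_J(I)\leq d_n/M_J$, your power trick is valid but longer than what the paper does. The paper simply picks a sufficiently general $(f_1,\dots,f_n)\in K_1\oplus\cdots\oplus K_n$ with $e(f_1,\dots,f_n)=\sigma(K_1,\dots,K_n)$ and chains
\[
\LL_J(I)\leq \LL_J(f_1,\dots,f_n)\leq \LL_J(K_1,\dots,K_n)=\frac{d_n}{M_J},
\]
the first inequality coming from $\langle f_1,\dots,f_n\rangle\subseteq\overline I$, the second from Proposition \ref{lower}, and the equality from Corollary \ref{corolBE}. Your detour through Theorem \ref{LstarJnodeg} buys you an argument that stays entirely at the level of the single-ideal \L ojasiewicz exponent $\LL_J(\langle h_1,\dots,h_n\rangle)$, avoiding any passage between the single-ideal and the mixed quantity $\LL_J(\langle f_1\rangle,\dots,\langle f_n\rangle)$; the paper's route is shorter but relies on identifying these two quantities for a system of parameters.
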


\begin{proof}
By Proposition \ref{Prop2.9}, we can consider an element
$(f_1,\dots, f_n)\in K_1\oplus \cdots \oplus K_n$ such that $e(f_1,\dots, f_n)=\sigma(K_1,\dots, K_n)$.
By Proposition \ref{lower}, we have that $\LL_J(f_1,\dots, f_n)\leq \LL_J(K_1,\dots, K_n)$.

Since $K_i\subseteq \overline I$, for all $i=1,\dots, n$, we have $\langle f_1,\dots, f_n\rangle \subseteq \overline I$. Thus $\LL_J(I)\leq \LL_J(f_1,\dots, f_n)$.
By Corollary \ref{corolBE} we deduce that $\LL_J(K_1,\dots, K_n)= \frac{d_n}{M_J}$.
Joining the above inequalities, we obtain the following:
$$
\LL_J(I)\leq \LL_J(f_1,\dots, f_n)\leq \LL_J(K_1,\dots, K_n)=\frac{d_n}{M_J}.
$$

Let us suppose that $I\subseteq\overline J$ and let us fix an index $i\in \{1,\dots, n-1\}$.
By Theorem \ref{GiJexists}, we can consider a map $\g=(g_{i+1}, \dots, g_n)\in \G_i(J)$.
The inclusion $I\subseteq\overline J$ implies
that $\LL_J^{(i)}(I)=\LL_{J_\g}(I_\g)$, by Proposition \ref{LJImax}.

By hypothesis, the $n$-tuple of ideals $(K_1,\dots, K_n)$ is $J$-non-degenerate. Therefore, by Corollary \ref{Jpasseja},
we have that $(K_1,\dots, K_i, J,\dots, J)$ is $J$-non-degenerate, where $J$ is repeated $n-i$ times. In particular
$\sigma(K_1,\dots, K_i, J,\dots, J)<\infty$. By Theorem \ref{GiJexists}, it follows that
$$\sigma(K_1,\dots, K_i, J,\dots, J)=\sigma(K_1,\dots, K_i, g_{i+1},\dots, g_n).$$
By virtue of Proposition \ref{Prop2.9}, we can consider a sufficiently general element $(f_1,\dots, f_i)\in K_1\oplus \cdots \oplus K_i$ such that
\begin{equation}\label{Kis}
\sigma(K_1,\dots, K_i, g_{i+1}, \dots, g_n)=e(f_1,\dots, f_i, g_{i+1}, \dots, g_n).
\end{equation}
Hence $e(f_1,\dots, f_i, g_{i+1}, \dots, g_n)=\sigma(K_1,\dots, K_i, J,\dots, J)$.

Therefore, by (\ref{Kis}) and the fact that $\nu_J(f_j)\geq d_j$, for all $j=1,\dots, i$, we have the following:
\begin{align}
\frac{d_1\cdots d_i M_J^{n-i}}{M_J^n}e(J)&=\sigma(K_1,\dots, K_i, J,\dots, J)=e(f_1,\dots, f_i, g_{i+1}, \dots, g_n) \nonumber\\
&\geq \frac{\nu_J(f_1)\cdots \nu_J(f_i)M_J^{n-i}}{M_J^n}e(J)\geq \frac{d_1\cdots d_i M^{n-i}}{M^n}e(J). \label{simplif}
\end{align}
where the first inequality of (\ref{simplif}) comes from (\ref{desref}). Hence the inequalities of (\ref{simplif})
become equalities, which means that $(f_1,\dots, f_i, g_{i+1}, \dots, g_n)$ is $J$-non-degenerate and $\nu_J(f_j)= d_j$, for all $j=1,\dots, i$.

Therefore, we deduce the following:
\begin{align}
\LL_J^{(i)}(I)&=\LL_{J_\g}(I_\g)\leq \LL_{J_\g}(\langle f_1,\dots, f_i\rangle_\g)  &&
\textnormal{(since $\langle f_1,\dots, f_i\rangle\subseteq \overline I$)}  \nonumber \\
&\leq \LL_{J}(f_1,\dots, f_i, g_{i+1},\dots, g_n) && \textnormal{(by Lemma \ref{Lbaixa})}  \nonumber\\
&=\frac{\max\{d_1,\dots, d_i, M_J\}}{M_J}.        && \textnormal{(by Corollary \ref{corolBE})}  \label{pelcorolBE}
\end{align}

The condition $f_j\in K_j\subseteq I\subseteq \overline J$ implies that $d_j\geq \nu_J(I)\geq \nu_J(J)=M_J$, for all $j=1,\dots, i$.
Then the member on the right side of
(\ref{pelcorolBE}) is equal to $\frac{d_i}{M_J}$ and the result follows.
\end{proof}

\section{Applications}\label{applications}

In this section we show an application of Theorem \ref{essencial} by means of a specific
family of ideals $(K_1,\dots, K_n)$ constructed in \cite{BiviaeIeJ}. In order to express this, we need to expose
some preliminary definitions in the next subsection. We will also show that the computation of the whole sequence
$\LL_J^*(I)$ is possible whenever $J$ is a diagonal ideal.

\subsection{A bound for the quotient of multiplicities of two monomial ideals and its relation with \L ojasiewicz exponents}

If $J$ is a monomial ideal of $\O_n$ of finite colength and $A$ is a closed subset of
$\R^n_{\geq 0}$, then we define $\nu_J(A)=\min\{\phi_J(k):k\in A\}$. We denote by $\Gamma(J)$ the Newton boundary of $\Gamma_+(J)$.

Let $h\in\O_n$. We will say that $h$ is {\it $J$-homogeneous} when $\nu_J(h)=\nu_J(x^k)$, for
any $k\in\supp(h)$. Given a map $g:(\C^n,0)\to (\C^p,0)$, we say that $g$ is {\it $J$-homogeneous} when
each component function of $g$ is $J$-homogeneous.

\begin{defn}\label{aiJI}\cite{BiviaeIeJ}
Let $I$ and $J$ be monomial ideals of $\O_n$ of finite colength. We define, for all $i\in\{1,\dots, n\}$, the following number:
$$
a_{i,J}(I)=\max\big\{ \nu_J\big( \Gamma_+(I) \cap C(\Delta) \big): \textnormal{ $\Delta$ is a compact face of $\Gamma_+(J)$ of dimension $n-i$}   \big\}.
$$
Therefore $a_{i,J}(I)\in\Q_{\geq 0}$, for all $i=1,\dots, n$.
It easily follows that $a_{1,J}(I)\leq\cdots \leq a_{n,J}(I)$.
\end{defn}

We will denote $a_{i,\m}(I)$ simply by $a_{i}(I)$, for all $i=1,\dots, n$.
Let us remark that the set of compact faces of $\Gamma_+(\m)$ is given by $\{\Gamma(\m)\cap \R^n_\sL: \sL\subseteq\{1,\dots, n\}, \sL\neq\emptyset\}$
and $\phi_\m(k)=\vert k\vert$, for all $k\in\R^n_{\geq 0}$. Therefore
\begin{equation}\label{aiI}
a_i(I)=\max\big\{ \ord(I^\sL): \sL\subseteq\{1,\dots, n\},\,\vert\sL\vert=n-i+1\big\}.
\end{equation}
Hence we recover the definition of the integers $a_i(I)$ given in \cite[p.\,197]{BiviaBAMS}.

Let $I$ be an ideal of $\O_n$ of finite colength and let $u\in\Z^n_{\geq 0}$, $u\neq 0$.
We denote by $k^I_u$ the point of intersection of $\Gamma(I)$ with the half-line $\{\lambda u:\lambda\in\R_{\geq 0}\}$.
Therefore, if $J$ is another monomial ideal of $\O_n$ of finite colength, we have
$$
a_{n,J}(I)=\max\big\{\phi_J(k_u^I):\,\, u\in \v(\Gamma_+(J))\big\}.
$$
We also observe that, under the conditions of Definition \ref{aiJI}, the maximum
that leads to the computation of $a_{i,J}(I)$ is attained at some point of $\v(\Gamma_+(I))\cup \{ k_u^I: u\in \v(\Gamma_+(J))\}$.

The point $k_u^I$ has rational coordinates, for all $u\in \Z^n_{\geq 0}$, $u\neq 0$. Hence,
we define
$$
c_J(I)=\min\left\{c\in\Z_{\geq 1}: c k_u^I\in\Z^n_{\geq 0},\, \textnormal{for all $u\in \v(\Gamma_+(J))$}\right\}.
$$

\begin{thm}\label{core}\cite{BiviaeIeJ}
Let $I$ and $J$ be monomial ideals of $\O_n$ of finite colength.
Let $c=c_J(I)$ and let $M=M_J$.
For any $i\in\{1,\dots, n\}$, let us consider the ideal
\begin{equation}\label{elsKi}
K_i=\big\langle x^k: k\in\supp\big(\overline{I^{cM}}\big),\, \phi_J(k)=a_{i, J}(\overline{I^{cM}})\big\rangle.
\end{equation}
Then $(K_1,\dots, K_n)$ is $J$-non-degenerate.
\end{thm}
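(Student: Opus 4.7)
The plan is to invoke the characterization of $J$-non-degeneracy given by Proposition~\ref{caractwrtJ}: it suffices to produce positive integers $a_1,\dots,a_n,d$ such that $\nu_J(K_i^{a_i})=dM$ for every $i\in\{1,\dots,n\}$ and $\sigma(K_1^{a_1},\dots,K_n^{a_n})=e(J^d)$.

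First I would set up the arithmetic. By construction every generator of $K_i$ is a monomial $x^k$ with $\phi_J(k)=a_{i,J}(\overline{I^{cM}})$, so $\nu_J(K_i)=a_{i,J}(\overline{I^{cM}})$. The integer $c=c_J(I)$ is chosen precisely so that the extremal points $cM\,k_u^I$ with $u\in\v(\Gamma_+(J))$, which realise the maxima defining $a_{i,J}(\overline{I^{cM}})$, lie in $\Z^n_{\geq 0}$; this guarantees that each $K_i$ is a nonzero monomial ideal and that the numbers $b_i:=a_{i,J}(\overline{I^{cM}})$ are positive integers. Taking $d=\mathrm{lcm}(b_1,\dots,b_n,M)/M$ and $a_i=dM/b_i\in\Z_{\geq 1}$, I obtain $\nu_J(K_i^{a_i})=dM$ simultaneously for every $i$.

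For the multiplicity identity, by Proposition~\ref{Prop2.9} I would choose a sufficiently general element $(h_1,\dots,h_n)\in K_1^{a_1}\oplus\cdots\oplus K_n^{a_n}$ for which $e(h_1,\dots,h_n)=\sigma(K_1^{a_1},\dots,K_n^{a_n})$. Since each $K_i^{a_i}$ is generated by monomials $x^k$ with $\phi_J(k)=dM$, each $h_i$ is $J$-homogeneous of $\nu_J$-order $dM$. The goal then is to prove that the map $(h_1,\dots,h_n)$ is $J$-non-degenerate, for then formula~(\ref{gJnodeg}) yields
$$e(h_1,\dots,h_n)=\frac{(dM)^n}{M^n}e(J)=d^ne(J)=e(J^d),$$
as required.

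The main expected obstacle is verifying this $J$-non-degeneracy of the generic map $(h_1,\dots,h_n)$ through condition (b) of Theorem~\ref{thm3.3BFS}: for each compact face $\Delta$ of $\Gamma_+(J)$, the common zero set of $\p_{J,\Delta}(h_1),\dots,\p_{J,\Delta}(h_n)$ must lie in $\{x_1\cdots x_n=0\}$. This is the combinatorial heart of the result: I would exploit the maximality built into the definition of $a_{i,J}$ to show that, for a compact face $\Delta$ of $\Gamma_+(J)$ of dimension $n-i$, the monomials of $K_i$ whose support meets $C(\Delta)$ sit on a suitably rich subvariety, matched in dimension against the $i$ copies of $K_i^{a_i}$. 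A case analysis over the faces of $\Gamma_+(J)$ — reducing to the vertices and then lifting by linearity of $\phi_J$ on each cone — should then rule out excess common zeros outside the coordinate hyperplanes.
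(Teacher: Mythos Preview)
This theorem is not proved in the present paper: it is quoted verbatim from \cite{BiviaeIeJ}, so there is no argument here to compare against. That said, your overall strategy---reduce via Proposition~\ref{caractwrtJ} to the combinatorial criterion of Theorem~\ref{thm3.3BFS}(b) for a generic element---is the natural one and is in the spirit of how such results are handled in \cite{BiviaeIeJ}.

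There are, however, two issues worth flagging. First, a minor slip: it is not true that every monomial generator of $K_i^{a_i}$ has $\phi_J$-value exactly $dM$. Since $\phi_J$ is a minimum of linear forms, $\phi_J(k+k')$ can strictly exceed $\phi_J(k)+\phi_J(k')$ when $k$ and $k'$ lie in different cones $C(\Delta)$; so a product of $a_i$ generators of $K_i$ may have $\phi_J$-value larger than $a_ib_i=dM$. What survives is that $\nu_J(K_i^{a_i})=dM$, and for Theorem~\ref{thm3.3BFS}(b) only the terms with $\phi_J$-value equal to $dM$ matter, so this does not kill the argument---but your claim that each $h_i$ is $J$-homogeneous should be dropped.

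Second, and more substantially, your final paragraph is where the real content lies, and as written it is too vague to count as a proof. The phrase ``matched in dimension against the $i$ copies of $K_i^{a_i}$'' does not parse: there is one copy of each $K_j^{a_j}$, not $i$ copies of a single one. What you actually need is this: for every compact face $\Delta$ of $\Gamma_+(J)$ of dimension $m$, at least $m+1$ of the ideals $K_j$ have a monomial generator supported in $C(\Delta)$, so that generically the system $\p_{J,\Delta}(h_1)=\cdots=\p_{J,\Delta}(h_n)=0$ has at least $m+1$ nontrivial equations cutting down an $(m+1)$-dimensional torus orbit. The inequality $a_{1,J}\leq\cdots\leq a_{n,J}$ together with the \emph{definition} of $a_{n-m,J}$ as the maximum of $\nu_J(\Gamma_+(I)\cap C(\Delta))$ over faces of dimension $m$ gives that $b_j\geq \nu_J(\Gamma_+(\overline{I^{cM}})\cap C(\Delta))$ for all $j\geq n-m$; one then has to produce \emph{integer} points in $\Gamma_+(\overline{I^{cM}})\cap C(\Delta)$ with $\phi_J$-value exactly $b_j$ (this is where the scaling by $cM$ is used), and finally argue genericity. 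None of this is carried out in your sketch.
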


The numbers $a_{i,J}(I)$ have the following property, proven in \cite[Theorem 4.12]{BiviaeIeJ}.

\begin{thm}\label{eIeJ}\cite{BiviaeIeJ}
Let $I,J\subseteq \O_n$ be monomial ideals of $\O_n$ of finite colength. Let $M=M_J$.
Then
\begin{equation}\label{qxulo}
\frac{e(I)}{e(J)}\leq \frac{a_{1,J}(I)\cdots a_{n,J}(I)}{M^n}
\end{equation}
and the following conditions are equivalent:
\begin{enumerate}
\item[(a)] equality holds in \textnormal{(\ref{qxulo})};
\item[(b)] there exists a $J$-homogeneous and $J$-non-degenerate polynomial map
$g=(g_1,\dots, g_n):(\C^n,0)\to (\C^n,0)$ and some $s\in\Z_{\geq 1}$ such that $\overline{I^s}=\overline{\langle g_1,\dots, g_n\rangle}$ and
$\nu_J(g_i)=s a_{i, J}(I)$, for all $i=1,\dots, n$.
\end{enumerate}
\end{thm}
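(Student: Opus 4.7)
The plan is to deduce both the inequality and the characterization of equality directly from the $J$-non-degenerate sequence $(K_1,\dots,K_n)$ furnished by Theorem \ref{core}, combined with the standard tools relating multiplicity, integral closure and generic joint reductions.

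First I would establish two basic scaling facts. Since $\Gamma_+(I^r)=r\,\Gamma_+(I)$ and $\phi_J$ is linear on each cone $C(\Delta)$ over a compact face of $\Gamma_+(J)$, one checks that $a_{i,J}(I^r)=r\,a_{i,J}(I)$ for every $r\in\Z_{\geq 1}$. Thus, writing $c=c_J(I)$ and $M=M_J$, the ideals $K_i$ of Theorem~\ref{core} satisfy $\nu_J(K_i)=a_{i,J}(\overline{I^{cM}})=cM\,a_{i,J}(I)$, and each $K_i$ is generated by monomials all of the same $\phi_J$-value. Because $(K_1,\dots,K_n)$ is $J$-non-degenerate, $J$-non-degeneracy (the equality case of \eqref{desref}) gives
\begin{equation}\label{sigmaKi}
\sigma(K_1,\dots,K_n)=\frac{\nu_J(K_1)\cdots\nu_J(K_n)}{M^n}e(J)=\frac{(cM)^n\,a_{1,J}(I)\cdots a_{n,J}(I)}{M^n}e(J).
\end{equation}

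Next I would prove \eqref{qxulo}. By Proposition \ref{Prop2.9} I can choose a sufficiently general $(f_1,\dots,f_n)\in K_1\oplus\cdots\oplus K_n$ with $e(f_1,\dots,f_n)=\sigma(K_1,\dots,K_n)$. Since $K_i\subseteq\overline{I^{cM}}$, we have $\langle f_1,\dots,f_n\rangle\subseteq\overline{I^{cM}}$, so monotonicity of the Samuel multiplicity under integral-closure-preserving inclusions gives $e(I^{cM})=e(\overline{I^{cM}})\leq e(f_1,\dots,f_n)$. Combining with \eqref{sigmaKi} and dividing by $(cM)^n=e(I^{cM})/e(I)$ yields \eqref{qxulo}.

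For the equivalence, the direction (b)$\Rightarrow$(a) is a direct computation: the $J$-non-degeneracy of $g$ gives, via \eqref{gJnodeg}, $e(g_1,\dots,g_n)=\prod\nu_J(g_i)\,e(J)/M^n=s^n\prod a_{i,J}(I)\,e(J)/M^n$; and $e(g_1,\dots,g_n)=e(\overline{I^s})=s^n e(I)$, giving equality in \eqref{qxulo}. For (a)$\Rightarrow$(b) I would retrace the inequality chain above and show each step is forced to be an equality. Assuming equality in \eqref{qxulo}, one obtains $e(f_1,\dots,f_n)=e(I^{cM})$, so by Rees' multiplicity theorem (using that $\O_n$ is quasi-unmixed) the inclusion $\langle f_1,\dots,f_n\rangle\subseteq\overline{I^{cM}}$ is an equality of integral closures, i.e.\ $\overline{\langle f_1,\dots,f_n\rangle}=\overline{I^s}$ with $s=cM$. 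Moreover, since every monomial generator of $K_i$ has the same $\phi_J$-value $cM\,a_{i,J}(I)$, any $\C$-linear combination $f_i$ of those generators is $J$-homogeneous with $\nu_J(f_i)=cM\,a_{i,J}(I)=s\,a_{i,J}(I)$; hence the chain
\[
\sigma(K_1,\dots,K_n)=e(f_1,\dots,f_n)\geq \frac{\nu_J(f_1)\cdots\nu_J(f_n)}{M^n}e(J)=\sigma(K_1,\dots,K_n)
\]
(the inequality being \eqref{desref}) forces $(f_1,\dots,f_n)$ to be $J$-non-degenerate. Setting $g_i=f_i$ then realizes condition (b).

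The main obstacle I anticipate is checking cleanly that the generic $f_i$ produced by Proposition~\ref{Prop2.9} retains $\nu_J(f_i)=\nu_J(K_i)$ \emph{and} constitutes a $J$-non-degenerate tuple; the $J$-homogeneity of the monomial generators of each $K_i$ (all having the same $\phi_J$-value) is what makes the first property automatic and, combined with \eqref{sigmaKi} and the equality of multiplicities in the assumption (a), forces the second. Everything else is bookkeeping with $\Gamma_+(I^r)=r\,\Gamma_+(I)$, the multiplicativity $e(I^r)=r^n e(I)$ and the Rees multiplicity theorem.
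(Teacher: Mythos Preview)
The present paper does not prove Theorem~\ref{eIeJ}; it merely cites it from \cite[Theorem~4.12]{BiviaeIeJ}. So there is no in-paper proof to compare against. That said, your argument is correct and is the natural one given that Theorem~\ref{core} (the existence of the $J$-non-degenerate sequence $(K_1,\dots,K_n)$) is also quoted from \cite{BiviaeIeJ}: once that sequence is in hand, the inequality \eqref{qxulo} follows from $K_i\subseteq\overline{I^{cM}}$ and monotonicity of multiplicity, and the equality case is forced by Rees' theorem exactly as you wrote. One small point of presentation: the $J$-non-degeneracy of the generic tuple $(f_1,\dots,f_n)$ does not actually require hypothesis (a); it follows already from $e(f_1,\dots,f_n)=\sigma(K_1,\dots,K_n)$ together with $\nu_J(f_i)=\nu_J(K_i)$ (which holds because all monomial generators of $K_i$ share the same $\phi_J$-value). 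Hypothesis (a) is used only to upgrade the inclusion $\langle f_1,\dots,f_n\rangle\subseteq\overline{I^{cM}}$ to an equality of integral closures. With that reordering the logic is entirely clean.
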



As observed in \cite[Remark 5.6]{BiviaeIeJ}, when equality holds in (\ref{qxulo}), then the number $s$ appearing in item (b)
can be taken as $s=c_J(I)M_J$. In particular, we can take $s=1$ when $J=\m_n$.


\begin{cor}\label{eixmotor}
Let $I,J$ be monomial ideals of $\O_n$ of finite colength such that $I\subseteq \overline J$. Then
\begin{equation}\label{niceineq}
\LL_J^{(i)}(I)\leq \frac{a_{i, J}(I)}{M_J}
\end{equation}
for all $i\in\{1,\dots, n-1\}$.
\end{cor}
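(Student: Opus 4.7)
The plan is to use Theorem \ref{essencial} combined with the specific construction of the ideals $K_1,\dots,K_n$ given in Theorem \ref{core}, and then clear the auxiliary power of $I$ via Corollary \ref{Ir}.

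First, I would set $c=c_J(I)$ and $M=M_J$, and let $(K_1,\dots,K_n)$ be the ideals defined in \eqref{elsKi}. By Theorem \ref{core} this $n$-tuple is $J$-non-degenerate, and by construction each $K_i$ is a monomial ideal contained in $\overline{I^{cM}}$. Since $\Gamma_+(\overline{I^{cM}})=cM\,\Gamma_+(I)$, the rescaling identity $a_{i,J}(\overline{I^{cM}})=cM\,a_{i,J}(I)$ follows immediately from Definition \ref{aiJI}. Moreover, because each $K_i$ is generated by monomials whose exponents $k$ achieve the minimum value $\phi_J(k)=a_{i,J}(\overline{I^{cM}})$ and $\phi_J$ is non-decreasing with respect to the coordinatewise order, we get $\nu_J(K_i)=a_{i,J}(\overline{I^{cM}})=cM\,a_{i,J}(I)$. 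The ordering $a_{1,J}(I)\leq\cdots\leq a_{n,J}(I)$ recalled in Definition \ref{aiJI} guarantees that $d_1\leq\cdots\leq d_n$ with $d_i:=\nu_J(K_i)$, as required by the hypothesis of Theorem \ref{essencial}.

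Next, I would check that $I^{cM}\subseteq\overline{J}$: this is clear since $I\subseteq\overline{J}$ implies $I^{cM}\subseteq\overline{J}^{cM}\subseteq\overline{J^{cM}}\subseteq\overline{J}$. Applying Theorem \ref{essencial} to $I^{cM}$ together with the $J$-non-degenerate sequence $(K_1,\dots,K_n)\subseteq\overline{I^{cM}}$ then yields, for every $i\in\{1,\dots,n-1\}$,
$$
\LL_J^{(i)}(I^{cM})\leq \frac{d_i}{M_J}=\frac{cM\,a_{i,J}(I)}{M_J}=c\,a_{i,J}(I).
$$

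Finally, Corollary \ref{Ir} applied to $r=cM$ gives $\LL_J^{(i)}(I^{cM})=cM\,\LL_J^{(i)}(I)=cM_J\,\LL_J^{(i)}(I)$. Dividing the previous inequality by $cM_J$ yields the desired bound
$$
\LL_J^{(i)}(I)\leq \frac{a_{i,J}(I)}{M_J}.
$$
The main (minor) technical point is verifying that $\nu_J(K_i)$ really equals $a_{i,J}(\overline{I^{cM}})$ and that the scaling by $cM$ is exact on both the $a_{i,J}$ invariants and on $\LL_J^{(i)}$; both follow directly from the definitions and from Corollary \ref{Ir}, so the argument is essentially a chained application of Theorems \ref{core}, \ref{essencial} and Corollary \ref{Ir}.
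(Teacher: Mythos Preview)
Your argument is correct and follows essentially the same route as the paper's own proof: construct the $J$-non-degenerate sequence $(K_1,\dots,K_n)$ from Theorem~\ref{core}, apply Theorem~\ref{essencial} to the ideal $I^{cM}$, and then divide out the factor $cM$ using Corollary~\ref{Ir}. The only differences are cosmetic: you spell out the verification that $I^{cM}\subseteq\overline J$ and the scaling identity $a_{i,J}(\overline{I^{cM}})=cM\,a_{i,J}(I)$, both of which the paper uses tacitly.
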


\begin{proof}
Let $M=M_J$, let $c=c_J(I)$ and let $\phi=\phi_J$.
Let us consider the ideals $K_1,\dots, K_n$ of $\O_n$ defined in (\ref{elsKi}).
By Theorem \ref{core}, we know that $(K_1,\dots, K_n)$ is $J$-non-degenerate. Moreover
$\nu_J(K_i)=a_{i, J}(I^{cM})$, for all $j=1,\dots, n$,
and $\nu_J(K_1)\leq \cdots \leq \nu_J(K_n)$.

Let us fix an index $i\in\{1,\dots, n-1\}$. By Theorem \ref{essencial}, we have that
\begin{equation}\label{IcM}
\LL_J^{(i)}(I^{cM})\leq \frac{a_{i, J}(I^{cM})}{M}=\frac{cM a_{i, J}(I)}{M}.
\end{equation}
Since we assume that $I\subseteq \overline J$, Corollary \ref{Ir} implies the equality $\LL_J^{(i)}(I^{cM})=cM\LL_J^{(i)}(I)$.
By joining this fact with (\ref{IcM}), relation (\ref{niceineq}) follows.
\end{proof}

In the following example we see that, in general, inequality (\ref{niceineq}) can be strict (we will see that this is not
the case when $J$ is diagonal).

\begin{ex}
Let us consider the ideals $I$ and $J$ of $\O_2$ given by $I=\langle x^5, y^5 \rangle$ and $J=\langle x^4, xy, y^4\rangle$.
We observe that $I\subseteq \overline J$, $M_J=4$ and $a_{1,J}(I)=\phi_J(\frac 52, \frac 52)=10$. So
$\frac{a_{1,J}(I)}{M_J}=\frac 52$.

A straightforward reproduction of the argument of the
proof of \cite[Corollary 3.4]{BiviaMZ2} consisting of replacing the powers of the maximal ideal by
the powers $J$ leads to the equality $\LL_J(I,J)=\LL_J(f,g)$, provided that $(f,g)$ is a sufficiently general element of $I\oplus J$
(see \cite[Theorem 3.6]{BE1}).
Let $H=\langle f,g\rangle$. Let $K_H$ denote the ideal of $\O_2$
generated by the monomials $x^{k_1}y^{k_2}$ which are integral over $H$, $k_1,k_2\in\Z_{\geq 0}$.
By applying \cite[Corollary 4.8]{BiviaCA2}, we observe that $K_H=\overline{\langle x^5, x^2y, xy^2, y^5\rangle}$.
The inclusion $K_H\subseteq \overline H$ implies that $\LL_J(H)\leq \LL_J(K_H)$.
It is easy to check that $a_{2,J} (K_H)=\phi_J(\frac 32,\frac 32)=6$. Then, we obtain the following inequalities:
$$
\LL_J^{(1)}(I)=\LL_J(I,J)=\LL_J(H)\leq \LL_J(K_H)=\frac{a_{2,J} (K_H)}{M_J}=\frac{3}{2}<\frac{5}{2}=\frac{a_{1, J}(I)}{M_J}.
$$
\end{ex}

In the study of examples, the computation of $\phi_J(k)$ for a given $k\in\Z^n_{\geq 0}$ can be done with the program {\it G\'ermenes} \cite{Montesinos}
developed by A.\,Montesinos-Amilibia.


\begin{cor}\label{corolfinal}
Let $I,J$ be monomial ideals of $\O_n$ of finite colength such that $I\subseteq \overline J$ and let $M=M_J$. Then
\begin{equation}\label{chain}
\frac{e(I)}{e(J)}\leq \LL_J^{(1)}(I)\cdots \LL_J^{(n)}(I)\leq \frac{a_{1,J}(I)\cdots a_{n,J}(I)}{M^n}
\end{equation}
and both inequalities turn into equalities if and only if there exists a polynomial map
$g=(g_1,\dots, g_n):(\C^n,0)\to (\C^n,0)$ and some $s\in\Z_{\geq 1}$ such that
$g$ is $J$-non-degenerate and $J$-homogeneous, $\overline{I^s}=\overline{\langle g_1,\dots, g_n\rangle}$ and $\nu_J(g_i)=s a_{i, J}(I)$,
for all $i=1,\dots, n$.
\end{cor}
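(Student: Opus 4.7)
The plan is to derive the chain \eqref{chain} by combining inequalities already established in the paper, then identify the equality case via Theorem \ref{eIeJ}. The left inequality $e(I)/e(J) \leq \LL_J^{(1)}(I)\cdots \LL_J^{(n)}(I)$ is nothing more than \eqref{eIeJLJI}, restated as \eqref{JHickel}, so no new work is needed there.

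For the right inequality, my plan is to bound each factor $\LL_J^{(i)}(I)$ by $a_{i,J}(I)/M$ separately and then multiply. Corollary \ref{eixmotor} handles $i\in\{1,\dots,n-1\}$ directly, so the only gap is the index $i=n$. To fill it, I would apply the first assertion of Theorem \ref{essencial} to the ideal $I^{cM}$ (where $c=c_J(I)$) using the family $(K_1,\dots,K_n)$ from Theorem \ref{core}: these ideals are contained in $\overline{I^{cM}}$, form a $J$-non-degenerate sequence, and satisfy $\nu_J(K_n)=a_{n,J}(\overline{I^{cM}})=cM\cdot a_{n,J}(I)$ since $\Gamma_+(I^{cM})=cM\,\Gamma_+(I)$. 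Theorem \ref{essencial} then yields
$$
\LL_J(I^{cM})\leq \frac{\nu_J(K_n)}{M}=\frac{cM\cdot a_{n,J}(I)}{M}.
$$
Using the elementary identity $\LL_J(I^{cM})=cM\cdot \LL_J(I)$, which follows at once from \eqref{LJIintegral2}, I obtain $\LL_J^{(n)}(I)=\LL_J(I)\leq a_{n,J}(I)/M$. Multiplying the $n$ bounds finishes the right inequality of \eqref{chain}.

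For the equivalence, my plan is to read both inequalities together: they sandwich the quotient $e(I)/e(J)$ between itself and $a_{1,J}(I)\cdots a_{n,J}(I)/M^n$, so they both become equalities if and only if $e(I)/e(J)=a_{1,J}(I)\cdots a_{n,J}(I)/M^n$, i.e.\ precisely when equality holds in \eqref{qxulo}. By Theorem \ref{eIeJ} (together with the remark that the $s$ appearing in its condition (b) may be taken as $c_J(I) M_J$), this is equivalent to the existence of a $J$-homogeneous and $J$-non-degenerate polynomial map $g=(g_1,\dots,g_n)$ and an integer $s\geq 1$ with $\overline{I^s}=\overline{\langle g_1,\dots,g_n\rangle}$ and $\nu_J(g_i)=s\,a_{i,J}(I)$ for all $i$. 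This is exactly the stated characterization.

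The main obstacle is essentially a bookkeeping one: Corollary \ref{eixmotor} as stated omits $i=n$, and the only substantive point in the proof is recovering that case by running Theorem \ref{essencial} on $I^{cM}$ and invoking the homogeneity of $\LL_J$ under taking powers. Everything else is a direct assembly of \eqref{eIeJLJI}, Corollary \ref{eixmotor}, and Theorem \ref{eIeJ}.
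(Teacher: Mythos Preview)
Your proof is correct and follows the same approach as the paper's own proof, which simply cites \eqref{eIeJLJI}, Corollary \ref{eixmotor}, and Theorem \ref{eIeJ}. You are in fact more careful than the paper: the statement of Corollary \ref{eixmotor} omits the case $i=n$, and you correctly fill it by rerunning the same argument with the first assertion of Theorem \ref{essencial} (the bound $\LL_J(I)\leq d_n/M_J$, which does not require $I\subseteq\overline J$) applied to $I^{cM}$, together with the elementary identity $\LL_J(I^{cM})=cM\cdot\LL_J(I)$.
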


\begin{proof}
The first inequality of (\ref{chain}) comes from (\ref{eIeJLJI}) and the second inequality of (\ref{chain})
is a direct application of Corollary \ref{eixmotor}. The characterization of when both inequalities of (\ref{chain}) become equalities
 follows from Theorem \ref{eIeJ}.
\end{proof}

\subsection{The sequence $\LL^*_J(I)$ when $J$ is diagonal}

Let us fix coordinates $(x_1,\dots, x_n)$ in $\C^n$. We say that an ideal $J\subseteq \O_n$
is {\it diagonal} when there exist positive integers $a_1,\dots, a_n$ such that
$\overline J=\overline{\langle x_1^{a_1},\dots, x_n^{a_n}\rangle}$.
In the next result we show some cases where (\ref{niceineq}) becomes an equality.

\begin{thm}\label{LmixtosJI}
Let $I,J$ be monomial ideals of $\O_n$ of finite colength. Then
\begin{equation}\label{casi=n}
\LL_J(I)=\frac{a_{n, J}(I)}{M_J}.
\end{equation}
If, moreover, $J$ is diagonal and $I\subseteq \overline J$, then
\begin{equation}\label{casdiagonal}
\LL_J^{(i)}(I)=\frac{a_{i, J}(I)}{M_J},
\end{equation}
for all $i=1,\dots, n$.
\end{thm}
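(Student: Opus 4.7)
My plan is to prove the two formulas separately. For (\ref{casi=n}), the starting point is the integral closure characterization $\LL_J(I)=\inf\{r/s:J^r\subseteq\overline{I^s}\}$ of (\ref{LJIintegral2}). Since $I$ and $J$ are monomial, $J^r\subseteq\overline{I^s}$ is equivalent to $r\,\Gamma_+(J)\subseteq s\,\Gamma_+(I)$; using convexity of $s\,\Gamma_+(I)$ and its stability under translation by $\R^n_{\geq 0}$, this reduces to the vertex condition $rv\in s\,\Gamma_+(I)$ for all $v\in\v(\Gamma_+(J))$. Writing $k^I_v=\lambda_v v$ with $\lambda_v>0$, the condition reads $r/s\geq\lambda_v$, so $\LL_J(I)=\max_v\lambda_v$; since $\phi_J(v)=M_J$ for $v\in\Gamma(J)$, we have $\lambda_v=\phi_J(k^I_v)/M_J$, and (\ref{casi=n}) follows because the vertices of $\Gamma_+(J)$ are precisely its compact $0$-dimensional faces.

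For (\ref{casdiagonal}), the case $i=n$ is absorbed in (\ref{casi=n}), and for $1\leq i\leq n-1$ the upper bound $\LL_J^{(i)}(I)\leq a_{i,J}(I)/M_J$ is exactly Corollary \ref{eixmotor}; the remaining task is the matching lower bound. When $J=\overline{\langle x_1^{a_1},\dots,x_n^{a_n}\rangle}$ is diagonal, the compact faces of $\Gamma_+(J)$ are the simplices $\Delta_\sL$ with vertex sets $\{a_je_j:j\in\sL\}$ for $\emptyset\neq\sL\subseteq\{1,\dots,n\}$, their cones are the coordinate subspaces $C(\Delta_\sL)=\R^n_\sL$, and $\phi_J(k)/M_J=\sum_{j}k_j/a_j$. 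A direct computation therefore gives
$$
\frac{a_{i,J}(I)}{M_J}=\max_{|\sL|=n-i+1}\;\min_{k\in\Gamma_+(I^\sL)}\sum_{j\in\sL}\frac{k_j}{a_j};
$$
I fix a subset $\sL^*$ attaining this maximum and let $\alpha$ denote its value.

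To produce the matching lower bound, choose $\g=(g_{i+1},\dots,g_n)\in\G_i(J)$ from the Zariski-open subset of Theorem \ref{GiJexists}, with $g_l=\sum_{k=1}^n c_{lk}\,x_k^{a_k}$, and restrict further to the still-open locus where every $(n-i)\times(n-i)$ minor of $(c_{lk})_{i+1\leq l\leq n,\,k\in\sL^*}$ is non-zero; by Proposition \ref{LJImax}, $\LL_J^{(i)}(I)=\LL_{J_\g}(I_\g)$. Set $q=\mathrm{lcm}\{a_j:j\in\sL^*\}$ and define $\psi:(\C,0)\to\C^n$ by $\psi_j\equiv 0$ for $j\notin\sL^*$ and $\psi_j(s)=c_j\,s^{q/a_j}$ for $j\in\sL^*$. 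The condition $g_l\circ\psi\equiv 0$ becomes the homogeneous $(n-i)\times(n-i+1)$ linear system $\sum_{k\in\sL^*}c_{lk}(c_k^{a_k})=0$, whose solution space is one-dimensional and, by the minors condition, has every component non-zero; taking any $a_k$-th root yields $c_j\neq 0$ for $j\in\sL^*$ and an arc with image in $V(\g)\cap\C^n_{\sL^*}$. A direct monomial computation gives $\ord\psi^*(J)=\ord\psi^*(J^{\sL^*})=q$ and $\ord\psi^*(I)=\ord\psi^*(I^{\sL^*})=q\alpha$. Since $\psi$ factors through $R_\g=\O_n/\langle g_{i+1},\dots,g_n\rangle$, any inclusion $J_\g^r\subseteq\overline{I_\g^s}$ pulls back by persistence of the integral closure to $\psi^*(J^r)\subseteq\overline{\psi^*(I^s)}=\psi^*(I^s)$ in the DVR $\O_1$, forcing $rq\geq sq\alpha$, i.e.\ $\LL_{J_\g}(I_\g)\geq\alpha$.

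The main obstacle is the construction of $\psi$ in the third paragraph: one has to pick $\g$ in the intersection of the Zariski-open subset of $\G_i(J)$ from Theorem \ref{GiJexists} with the open locus where the $\sL^*$-submatrix of coefficients has all non-vanishing maximal minors. Both conditions are Zariski-open and their intersection is non-empty, which together with the diagonal structure of $J$ is precisely what ensures the homogeneous linear system over $\sL^*$ has a non-trivial solution with no zero coordinate; once such $\g$ is in hand, the remaining pullback and order computations are formal.
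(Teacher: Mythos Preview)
Your proof is correct and close in spirit to the paper's, but the lower-bound argument for (\ref{casdiagonal}) is organised differently and is in fact a bit cleaner. For (\ref{casi=n}) both arguments are identical: reduce $J^r\subseteq\overline{I^s}$ to the vertex condition on $\Gamma_+(J)$ and read off $\max_u\phi_J(k^I_u)/M_J$. For the lower bound when $J$ is diagonal, the paper does not pass through Proposition~\ref{LJImax}; instead it chooses a sufficiently general $(h_1,\dots,h_i,g_{1},\dots,g_{n-i})\in I^{\oplus i}\oplus J^{\oplus(n-i)}$, uses Proposition~\ref{lower} to bound $\LL_J^{(i)}(I)\geq \LL_J(h_1,\dots,g_{n-i})$, restricts to $\C^n_\sL$ via Lemma~\ref{restrict}, and then invokes the arc formula of Lemma~\ref{byarcs}. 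The arc itself is constructed exactly as you do (Gauss elimination on the restricted $g_j$'s to force $g_j^\sL\circ\varphi_0\equiv 0$), but the paper must additionally argue, by genericity of the $h_j\in I$, that $h_j^\sL\circ\varphi_0\neq 0$ so that $\ord\varphi_0^*(A^\sL)$ is finite and equals $\nu'_J(I^\sL)$.

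Your route via $\LL_J^{(i)}(I)=\LL_{J_\g}(I_\g)$ and the DVR/persistence argument sidesteps this last genericity check entirely, because you compute $\ord\psi^*(I)$ directly from the monomial generators of $I$; in exchange you need $\g$ to lie in the intersection of the Zariski-open set of Theorem~\ref{GiJexists} with the non-vanishing-minors locus, which is indeed non-empty since both are non-empty Zariski-open subsets of an affine space. Both arguments ultimately rest on the same arc in the coordinate subspace $\C^n_{\sL^*}$, so the difference is one of packaging rather than of substance.
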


\begin{proof}
Let us see first relation (\ref{casi=n}). Let $\phi=\phi_J$ and let $p,q\in\Z_{\geq 1}$.
As recalled in Section \ref{M&NF}, the integral closure of a monomial ideal of $\O_n$ is generated by
the monomials whose support belongs to the Newton polyhedron of the given ideal. Hence we have the following equivalences:
\begin{align*}
J^p\subseteq \overline{I^q}  &\Longleftrightarrow  \Gamma_+(J^p)\subseteq \Gamma_+(I^q)\\
                             &\Longleftrightarrow p\Gamma_+(J)\subseteq q\Gamma_+(I)\\
                             &\Longleftrightarrow \phi(pu)\geq \phi(qk_u^I),\,\textnormal{for all $u\in \v(\Gamma_+(J))$}\\
                             &\Longleftrightarrow p M\geq q\phi(k_u^I),\,\textnormal{for all $u\in \v(\Gamma_+(J))$}\\
                             &\Longleftrightarrow \frac{p}{q}\geq \frac{\phi(k_u^I)}{M},\,\textnormal{for all $u\in \v(\Gamma_+(J))$}.
\end{align*}
Therefore, by using (\ref{LJIintegral2}), we obtain that
$$
\LL_J(I)=\frac{\max\{\phi(k_u^I): u\in \v(\Gamma_+(J))\}}{M}=\frac{a_{n, J}(I)}{M}.
$$

Let us fix an index $i\in\{1,\dots, n-1\}$. Let us see that (\ref{casdiagonal}) holds provided that $J$ is diagonal and $I\subseteq \overline J$.

Let us suppose that $J$ is diagonal. Since $\LL_J^{(i)}(I)=\LL_{\overline J}^{(i)}(I)$ we can suppose that
there exist positive integers $a_1,\dots, a_n$ such that $J=\langle x_1^{a_1},\dots,x_n^{a_n}\rangle$.
By Proposition \ref{Prop2.9}, let us consider a sufficiently general element
$(h_1,\dots, h_i,g_{1},\dots, g_{n-i})\in I\oplus \cdots \oplus I\oplus J\oplus\cdots \oplus J$
such that
$$
e(h_1,\dots, h_i,g_{1},\dots, g_{n-i})=e(I,\dots, I, J,\dots, J).
$$

By Proposition \ref{lower}, we have that
\begin{equation}\label{intermig1}
\LL_J^{(i)}(I)=\LL_J(I,\dots, I,J, \dots, J)\geq\LL_J(h_1,\dots, h_i, g_{1},\dots, g_{n-i}).
\end{equation}
Let us denote by $A$ the ideal $\langle h_1,\dots, h_i, g_{1},\dots, g_{n-i}\rangle$
and let us fix a subset $\sL\subseteq\{1,\dots, n\}$ with $\vert \sL\vert=n-i+1$. In order to simplify the notation,
with no loss of generality,
we will suppose that $\sL=\{1,\dots, n-i+1\}$.
Hence, by Lemmas \ref{byarcs} and \ref{restrict}, we obtain that
\begin{equation}\label{intermig2}
\LL_J(A)\geq \LL_{J^\sL}(A^\sL)=\sup_{\varphi\in \Omega_{\sL}}\frac{\ord( \varphi^*(A^\sL))}{\ord( \varphi^*(J^\sL))}
\end{equation}
where $\Omega_{\sL}$ is the set of analytic arcs $\varphi:(\C,0)\to (\C^n_\sL,0)$.

Let us consider the map $\psi:(\C^n,0)\to (\C^n,0)$ given by $\psi(x_1,\dots, x_n)=(x_1^{a_1},\dots, x_n^{a_n})$.
Let us observe that each $g_j$ can be expressed as $g_j=f_j\circ \psi$, for all $j=1, \dots, n-i$, where $f_{1},\dots, f_{n-i}$
are generic linear forms of $\C[x_1,\dots, x_n]$.

The matrix of coefficients of the system of linear equations $f_{1}^\sL=\dots=f_{n-i}^\sL=0$ has size $(n-i)\times (n-i+1)$.
Applying the Gauss elimination process to this system, we conclude that there exist polynomials
$\overline g_{1},\dots, \overline g_{n-i}$ of the form
$$
\overline g_j(x_1,\dots, x_{n-i+1})= x_j^{a_j}-\gamma_j x_{n-i+1}^{a_{n-i+1}}
$$
for some $\gamma_j\in\C\smallsetminus\{0\}$, for all $j=1,\dots, n-i$, such
that
\begin{equation}\label{gesbarra}
\langle g_1^\sL,\dots, g_{n-i}^\sL\rangle=\langle \overline g_1,\dots, \overline g_{n-i}\rangle.
\end{equation}

Let $a=a_1\cdots a_n$. Let us consider the analytic arc $\varphi_0:(\C,0)\to (\C^n_\sL,0)$ given by
\begin{equation}\label{arg}
\varphi_0(t)=\big((\gamma_{1}t^a)^\frac{1}{a_1},
\dots,(\gamma_{n-i}t^{a})^\frac{1}{a_{n-i}}, t^{\frac{a}{a_{n-i+1}}}  \big)
\end{equation}
for all $t\in \C$. Let us write $\gamma_j=r_je^{\imag \theta_j}$, where $r_j\in\R_{>0}$, $\theta_j\in[0,2\pi[$, for all $j=1,\dots, n-i$, and in
(\ref{arg}) we consider the definition $\gamma_{j}^{1/a_j}=r_{j}^{1/a_j}e^{\imag \theta_{j}/a_j}$, for all $i=1,\dots, n-i$.
We observe that
\begin{equation}\label{vanishes}
(\overline g_j\circ\varphi_0)(t)=\left((\gamma_{j}t^a)^\frac{1}{a_j}\right)^{a_j}-\gamma_j \left(t^{\frac{a}{a_{n-i+1}}}\right)^{a_{n-i+1}}=
\gamma_j t^a-\gamma_j t^a=0
\end{equation}
for all $t\in \C$ and all $j=1,\dots, n-i$.

The compact face of dimension $n-1$ of $\Gamma_+(J)$ is supported by the vector $w=(\frac{a}{a_1},\dots,\frac{a}{a_n})$.
This vector has integer coordinates but is not primitive in general. Let $v$ denote the smallest vector of the form
$v=\lambda w$, $\lambda>0$, such that $v$ is primitive. Hence $v=\frac{1}{w_0}w$, where $w_0$
denotes the greatest common divisor of the components of $w$.
Moreover $\phi_J(k)=\langle v,k\rangle$, for all $k\in\R^n_{\geq 0}$
and $M_J=\frac{a}{w_0}$.

For any $f\in\O_n$, $f\neq 0$, we define $\nu'_J(f)=\min\{\langle w,k\rangle: k\in\supp(f)\}$.
We also set $\nu'_J(0)=+\infty$. Then $\nu'_J(f)=w_0 \nu_J(f)$, for all $f\in\O_n$.

Clearly we have that
\begin{equation}\label{ordreJL}
\ord\left( \varphi_0^*(J^\sL)\right)=\min\left\{ \ord(x_j^{a_j}\circ \varphi_0): j=1,\dots, n-i+1   \right\}=a.
\end{equation}
Moreover
$$
A^\sL=\left\langle  h_1^\sL,\dots, h_i^\sL, g_{1}^\sL,\dots, g_{n-i}^\sL\right\rangle=
\left\langle  h_1^\sL,\dots, h_i^\sL\right\rangle+ \left\langle \overline g_{1},\dots, \overline g_{n-i}\right\rangle
$$
where the last equality comes from (\ref{gesbarra}).
Then
$$
\ord\left( \varphi_0^*(A^\sL)\right)=\min\left\{ \ord(h_1^\sL\circ \varphi_0),\dots, \ord(h_i^\sL\circ \varphi_0),
\ord(\overline g_{1}\circ \varphi_0),\dots, \ord(\overline g_{n-i}\circ \varphi_0)\right\}.
$$
By (\ref{vanishes}), we know that $\overline g_j\circ \varphi_0=0$, for all $j=1,\dots, n-i$.
Since the coefficients of the forms $f_1,\dots, f_{n-i}$ are chosen generically, we can assume that the numbers $\gamma_1,\dots, \gamma_{n-i}$
verify that all the arcs $h_1^\sL\circ \varphi_0,\dots, h_i^\sL\circ \varphi_0$ are non-zero.
Hence, we conclude that
\begin{equation}\label{ordreAL}
\ord\left( \varphi_0^*(A^\sL)\right)=
\min\left\{ \ord(h_1^\sL\circ \varphi_0),\dots, \ord(h_i^\sL\circ \varphi_0)\right\}=\min\left\{ \nu'_J(h_1^\sL),\dots, \nu'_J(h_i^\sL)\right\}.
\end{equation}

Thus we finally obtain, by (\ref{intermig1}), (\ref{intermig2}), (\ref{ordreJL}) and (\ref{ordreAL}) that
\begin{align*}
\LL_J^{(i)}(I)=\LL_J(A)\geq \LL_{J^\sL}(A^\sL)&\geq \frac{\ord( \varphi_0^*(A^\sL))}{\ord( \varphi_0^*(J^\sL))}\\
&=\frac{\min\left\{ \nu'_J(h_1^\sL),\dots, \nu'_J(h_i^\sL)\right\}}{a}
=\frac{\nu_J(I^\sL)w_0}{M_Jw_0}=\frac{\nu_J(I^\sL)}{M_J}.
\end{align*}

That is, we have proved that
$$
\LL_J^{(i)}(I)\geq \frac{\nu_J(I^\sL)}{M_J}
$$
for all $\sL\subseteq \{1,\dots, n\}$ such that $\vert \sL\vert =n-i+1$. This means that
$$
\LL_J^{(i)}(I)\geq \frac{1}{M_J}\max\left\{\nu_J(I^\sL): \sL\subseteq\{1,\dots, n\}, \vert \sL\vert=n-i+1\right\}=\frac{a_{i, J}(I)}{M_J}.
$$

We have already proved in Corollary \ref{eixmotor} that the inequality $\LL_J^{(i)}(I)\leq \frac{a_{i, J}(I)}{M_J}$ holds in general.
Therefore equality (\ref{casdiagonal}) follows.
\end{proof}

\begin{rem}
\begin{enumerate}
\item[(a)] Given any diagonal ideal $J$ of $\O_n$ and $i\in\{0,1,\dots,  n-1\}$, the compact faces of $\Gamma_+(J)$ of dimension $i$ are given
by $\{\Gamma(J)\cap \R^n_\sL: \sL\subseteq\{1,\dots, n\}, \vert \sL\vert=i+1\}$.
In particular, analogous to (\ref{aiI}), if $I$ denotes any monomial ideal of $\O_n$ of finite colength, we have:
$$
a_{i, J}(I)=\max\big\{\nu_J(I^\sL\O_n):  \sL\subseteq\{1,\dots, n\big\}, \vert \sL\vert=n-i+1\}.
$$

\item[(b)] Let $I$ be a monomial ideal of $\O_n$ of finite colength.
As a direct application of (\ref{aiI}) and Theorem \ref{LmixtosJI} in the case $J=\m_n$, we obtain that
\begin{equation}\label{eqLojorder}
\LL_0^{(i)}(I)=\max\big\{\ord(I^{\sL}): \sL\subseteq\{1,\dots, n\},\,\vert \sL\vert=n-i+1\big\},
\end{equation}
for all $i=1,\dots, n$. The above relation was already proven in \cite[Corollary 4.2]{BF1} by means of a completely different argument
based on toric modifications.
\end{enumerate}
\end{rem}

Let $J$ be a diagonal ideal of $\O_n$ given by $J=\overline{\langle x_1^{a_1},\dots, x_n^{a_n}\rangle}$, where
$a_1,\dots, a_n\in\Z^n_{\geq 1}$. Let $w_J=(\frac{a_1\cdots a_n}{a_1},\dots, \frac{a_1\cdots a_n}{a_n})$ and let
$v_J=\frac{1}{w_0}w_J$, where $w_0$ denotes the greatest common divisor of the components of $w_J$.
Then the filtrating map $\phi_J:\R^n_{\geq 0}\to \R_{\geq 0}$ is given by $\phi_J(k)=\langle v_J,k\rangle$, for all $k\in\R^n_{\geq 0}$. Therefore $M_J=\frac{a_1\dots a_n}{w_0}$.

\begin{cor}\label{corolfinalqh}
Under the conditions and notation of the above paragraph,
let $I$ be another monomial ideal of $\O_n$ of finite colength such that $I\subseteq \overline J$. Then
\begin{equation}\label{chain2}
\frac{e(I)}{e(J)}\leq \LL_J^{(1)}(I)\cdots \LL_J^{(n)}(I)=\frac{a_{1,J}(I)\cdots a_{n,J}(I)}{\big(\frac{a_1\cdots a_n}{w_0}\big)^n}
\end{equation}
and the following conditions are equivalent:
\begin{enumerate}
\item[(a)] equality holds in \textnormal{(\ref{chain2})};
\item[(b)] there exists some $s\geq 1$ such that $\overline{I^{s}}=\overline{\langle g_1,\dots, g_n\rangle}$,
where $g=(g_1,\dots, g_n):(\C^n,0)\to (\C^n,0)$ is a weighted homogeneous map
with respect to $v_J$ such that $d_{v_J}(g_i)=s a_{i, J}(I)$, for all $i=1,\dots, n$.
\end{enumerate}
\end{cor}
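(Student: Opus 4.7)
The plan is to bootstrap from Theorem \ref{LmixtosJI} and Theorem \ref{eIeJ}, with a short translation step to move from $J$-homogeneity to weighted homogeneity with respect to $v_J$. First, since $J$ is diagonal and $I\subseteq\overline J$, Theorem \ref{LmixtosJI} yields $\LL_J^{(i)}(I)=a_{i,J}(I)/M_J$ for every $i=1,\dots,n$. Multiplying these equalities and substituting $M_J=(a_1\cdots a_n)/w_0$ gives the right-hand equality in \eqref{chain2}, while the left-hand inequality is \eqref{eIeJLJI}. In particular, condition (a) of the corollary is equivalent to equality in \eqref{qxulo} of Theorem \ref{eIeJ}.

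Thus, for the equivalence (a)$\Leftrightarrow$(b) I would invoke Theorem \ref{eIeJ}, whose condition (b) provides a $J$-homogeneous, $J$-non-degenerate polynomial map $g$ with $\overline{I^s}=\overline{\langle g_1,\dots,g_n\rangle}$ and $\nu_J(g_i)=s\,a_{i,J}(I)$. The translation step is short: because $J$ is diagonal, $\Gamma_+(J)$ has a unique compact facet, supported by $v_J$, so the filtrating map is simply $\phi_J(k)=\langle v_J,k\rangle$ on $\R^n_{\geq 0}$. Hence $g_i$ being $J$-homogeneous is literally weighted homogeneous with respect to $v_J$, and $\nu_J(g_i)=d_{v_J}(g_i)$. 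This yields (a)$\Rightarrow$(b) at once.

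The only point of care is the reverse direction, since (b) of the corollary omits the $J$-non-degeneracy assumption of Theorem \ref{eIeJ}(b). I would handle this by a two-sided multiplicity squeeze: from $\overline{I^s}=\overline{\langle g_1,\dots,g_n\rangle}$ one has $e(g_1,\dots,g_n)=s^n e(I)$; combining the lower bound \eqref{desref} with the upper bound \eqref{qxulo} of Theorem \ref{eIeJ} gives
\[
s^n e(I)=e(g_1,\dots,g_n)\geq \frac{\nu_J(g_1)\cdots\nu_J(g_n)}{M_J^n}\,e(J)=s^n\,\frac{a_{1,J}(I)\cdots a_{n,J}(I)}{M_J^n}\,e(J)\geq s^n e(I),
\]
so all inequalities collapse to equalities. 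The outer one is precisely (a), and the inner one is exactly $J$-non-degeneracy of $g$, which legitimates invoking Theorem \ref{eIeJ}. I expect this last squeeze---recognizing that the missing hypothesis is automatic---to be the only delicate point; the rest is a direct assembly of results already established in the paper.
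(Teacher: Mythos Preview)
Your argument is correct and follows exactly the paper's route: invoke Theorem \ref{LmixtosJI} for the equality in \eqref{chain2}, then appeal to Theorem \ref{eIeJ} for the equivalence. Your squeeze in the direction (b)$\Rightarrow$(a), recovering the $J$-non-degeneracy that the corollary's condition (b) omits relative to Theorem \ref{eIeJ}(b), actually supplies a detail the paper's one-line proof leaves implicit.
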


\begin{proof}
By Theorem \ref{LmixtosJI} we know that $\LL_J^{(i)}(I)=\frac{a_{i, J}(I)}{M_J}$, for all $i=1,\dots, n$. Hence the result follows as an immediate
consequence of Theorem \ref{eIeJ}.
\end{proof}

If $J$ is a diagonal ideal, then it follows immediately from the definition of $c_J(I)$ that $c_J(I)=1$, for any monomial ideal $I$ of finite colength of $\O_n$. Therefore, as remarked after Theorem \ref{eIeJ}, when equality holds in (\ref{chain2}), then the integer $s$ appearing in item (b) of
the previous corollary can be taken as $s=c_J(I)M_J=\frac{a_1\cdots a_n}{w_0}$.

\begin{ex} Let us consider the ideals $J$ and $I$ of $\O_3$ given by
$J=\langle x^a, y^b, z^c\rangle$ and $I=\langle x^d, y^d, z^d, x^ey^ez^e\rangle$,
where $a,b,c,d,e$ are positive integers such that $1\leq a\leq b\leq c\leq d$ and $abc\leq e(bc+ac+ab)\leq abd$.
These conditions imply that $\nu_J(I)=\phi_J(e,e,e)$ and $I\subseteq \overline J$. By Theorem \ref{LmixtosJI}, the sequence $\LL_J^*(I)$ is given by
$$
\LL_J^*(I)=\left(\LL_J^{(3)}(I),\LL_J^{(2)}(I),\LL_J^{(1)}(I)\right)=\left(\frac da, \frac db, \frac{e(bc+ac+ab)}{abc}\right).
$$
By Corollary \ref{corolfinalqh}, we know that
$\frac{e(I)}{e(J)}\leq \LL_J^{(1)}(I)\LL_J^{(2)}(I)\LL_J^{(3)}(I)$. We observe that $e(I)=3d^2e$ and $e(J)=abc$. Therefore
$$
\frac{e(I)}{e(J)}\leq \LL_J^{(1)}(I)\LL_J^{(2)}(I)\LL_J^{(3)}(I)\Longleftrightarrow
\frac{3d^2e}{abc}\leq \frac{d^2e(bc+ac+ab)}{a^2b^2c} \Longleftrightarrow 3\leq \frac ca+\frac cb+1,
$$
which is the case, since we assume that $a\leq b\leq c$. Moreover equality holds if and only if $a=b=c$.
\end{ex}

\begin{ex}
Let us consider the diagonal ideals of $\O_n$ given by $J=\overline{\langle x^{a_1}_1,\dots, x^{a_n}_n\rangle}$ and
$I=\overline{\langle x_1^{b_1},\dots, x_n^{b_n}\rangle}$, where $a_i,b_i\in \Z_{\geq 1}$, $b_i\geq a_i$, for all $i=1,\dots, n$ (so that $I\subseteq \overline J$).
Let us consider a permutation $\sigma$ of $\{1,\dots, n\}$ such that
$\frac{b_{\sigma(1)}}{a_{\sigma(1)}}\leq \cdots \leq \frac{b_{\sigma(n)}}{a_{\sigma(n)}}$.
Then, as a consequence of Theorem \ref{LmixtosJI} we have that
$\LL_J^{(i)}(I)=\frac{b_{\sigma(i)}}{a_{\sigma(i)}}$, for all $i=1,\dots, n$.
\end{ex}

\begin{rem}\label{concl}
If $I$ and $J$ are monomial ideals of $\O_n$ of finite colength, then $\LL_J(I)$ can be also computed by means of the
Newton filtration induced by $\Gamma_+(I)$. That is, by \cite[Proposition 5.3]{BF1}, we have that $\LL_J(I)=\frac{M_I}{\nu_I(J)}$.
Joining this fact with (\ref{casi=n}), if $I\subseteq \overline J$, then we obtain the following relation:
$$
\LL_J(I)=\frac{M_I}{\nu_I(J)}=\frac{a_{n,J}(I)}{M_J}.
$$
\end{rem}

Let $I$ and $J$ be any pair of ideals of $\O_n$ of finite colength.
In accordance with inequality (\ref{eIeJLJI}) and the results of Hickel \cite[p.\,643]{Hickel}, there arises the problem of studying if the
condition
\begin{equation}\label{eqeIeJ}
\frac{e(I)}{e(J)}=\LL_J^{(1)}(I)\cdots \LL_J^{(n)}(I)
\end{equation}
determines some structure for the integral closure of $I$.
We conjecture that equality (\ref{eqeIeJ}) holds if and only if there exists some $s\geq 1$ such that
$\overline{I^s}=\overline{\langle g_1,\dots, g_n\rangle}$, where $(g_1,\dots, g_n)$ is $J$-non-degenerate (see \cite[Definition 4.7]{BiviaeIeJ}), that is,
there exists some $d\geq 1$ and some $a_1,\dots, a_n\in\Z_{\geq 1}$ such that
$\overline{\langle g_1^{a_1},\dots, g_n^{a_n}\rangle}=\overline{J^d}$.
By Corollary \ref{corolfinal}, we know that this is true
if $I$ is monomial and $J$ is diagonal.

Let $J$ be a diagonal ideal of $\O_n$.
In the following example we show that if $I$ is not a monomial ideal of $\O_n$
 and is Hickel with respect to $J$, that is, equality holds in (\ref{chain2}), then
we can not expect the same characterization appearing in Corollary \ref{corolfinalqh}. More precisely, in the context of Corollary \ref{corolfinalqh},
the condition of $J$-homogeneity of the map $g$ is too strong if $I$ is not monomial.

\begin{ex}\label{exempledeSaitama}  
Let us consider the ideal $I$ of $\O_2$ given by $I=\langle x^4+y^2, x^5\rangle$. We observe
that $e(I)=10=\LL_0^{(1)}(I)\LL_0^{(2)}(I)$.
The set of vertices of $\Gamma_+(I)$ is $\{(4, 0),(0, 2)\}$. If there exist two homogeneous polynomials $g_1,g_2\in\C[x,y]$ such that
$\overline{I^s}=\overline{\langle g_1,g_2\rangle}$, for some $s\geq 1$,
then $\deg(g_1)=4s$ and $\deg(g_2)=2s$, or vice versa. The Newton boundary $\Gamma(I)$
is formed by the segment joining the points $(4, 0)$ and $(0, 2)$. Hence it follows that $g_1$ or $g_2$ is a monomial.
Moreover, the condition $\overline{I^s}=\overline{\langle g_1,g_2\rangle}$ implies that $\langle g_1,g_2\rangle$
has finite colength. Thus $\langle g_1,g_2\rangle$ is Newton non-degenerate (see Remark \ref{NNDideals}).
In particular, the ideal $I^s$ is Newton non-degenerate. So
$I$ must be Newton non-degenerate too, which is not the case. Then the initial assumption is not true, that is,
the equivalence of Corollary \ref{corolfinalqh} does not hold in general if $I$ is not a monomial ideal and $J$ is equal to the maximal ideal.
\end{ex}





\begin{thebibliography}{AA}




\bibitem{BiviaCA2} C.\,Bivià-Ausina,
\newblock{\it The integral closure of ideals in $\C\{x,y\}$},
\newblock Comm. Algebra. {\bf 31}, No. 12 (2003), 6115--6134.

\bibitem{BiviaPLMS} C.\,Bivià-Ausina,
\newblock{\it Mixed Newton numbers and isolated complete intersection singularities},
\newblock Proc. London Math. Soc. {\bf 94}, No. 3 (2007), 749--771.

\bibitem{BiviaMRL} C.\,Bivià-Ausina,
\newblock{\it Joint reductions of monomial ideals and multiplicity of complex analytic maps},
\newblock Math. Res. Lett. {\bf 15}, No. 2 (2008), 389--407.

\bibitem{BiviaMZ2} C.\,Bivià-Ausina,
\newblock{\it Local \L ojasiewicz exponents, Milnor numbers and mixed multiplicities of ideals},
\newblock Math. Z. {\bf 262}, No. 2 (2009), 389-409.

\bibitem{BiviaBAMS} C.\,Bivià-Ausina,
\newblock{\it Multiplicity and \L ojasiewicz exponent of generic linear sections of monomial ideals},
\newblock Bull. Aust. Math. Soc. {\bf 91}, No. 2 (2015), 191–201.

\bibitem{BiviaeIeJ} C.\,Bivià-Ausina,
\newblock{\it Integral closure and bounds for quotients of multiplicities of monomial ideals},
\newblock J. Algebra {\bf 501}, No. 1 (2018), 233--254.


\bibitem{BE1} C.\,Bivià-Ausina and S.\,Encinas,
\newblock{\it The Lojasiewicz exponent of a set of weighted homogeneous ideals},
\newblock J. Pure Appl. Algebra {\bf 215}, No. 4 (2011), 578--588.

\bibitem{BENewton} C.\,Bivià-Ausina and S.\,Encinas,
\newblock{\it \L ojasiewicz exponent of families of ideals, Rees mixed multiplicities and Newton filtrations},
\newblock Rev. Mat. Complut. {\bf 26}, No. 2 (2013), 773–-798.

\bibitem{BF1} C.\,Bivià-Ausina and T.\,Fukui,
\newblock{\it Mixed \L ojasiewicz exponents and log canonical thresholds of ideals},
\newblock J. Pure and Appl. Algebra {\bf 220}, No. 1 (2016), 223–245.

\bibitem{BF2} C.\,Bivià-Ausina and T.\,Fukui,
\newblock{\it Invariants for bi-Lipschitz equivalence of ideals},
\newblock Q. J. Math. {\bf 68}, No. 3 (2017), 791–-815.



\bibitem{BFS} C.\,Bivià-Ausina, T.\,Fukui \and M.J.\,Saia,
\newblock{\it Newton graded algebras and the codimension of non-degenerate ideals},
\newblock Math. Proc. Cambridge Philos. Soc. {\bf 133} (2002), 55--75.


\bibitem{Brz} S.\,Brzostowski,
\newblock{\it The \L ojasiewicz exponent of semiquasihomogeneous singularities},
\newblock Bull. London Math. Soc. {\bf 47}, No. 5 (2015), 848–-852.















\bibitem{HIO} M.\,Herrmann, S.\,Ikeda and U.\,Orbanz,
\newblock Equimultiplicity and Blowing Up. An Algebraic Study. With an
Appendix by B. Moonen.
\newblock Springer-Verlag, Berlin (1988).

\bibitem{Hickel} M.\,Hickel,
\newblock {\it Fonction asymptotique de Samuel des sections hyperplanes et multiplicité},
\newblock J. Pure Appl. Algebra {\bf 214}, No. 5 (2010), 634-645.


\bibitem{HS} C.\,Huneke and I.\,Swanson,
\newblock Integral Closure of Ideals, Rings, and Modules,
\newblock London Math. Soc. Lecture Note Series 336 (2006), Cambridge
University Press.




\bibitem{K} A.G.\,Kouchnirenko,
\newblock{\it Polyèdres de Newton et nombres de Milnor},
\newblock Invent. Math. {\bf 32} (1976), 1--31.


\bibitem{LT} M.\,Lejeune and B.\,Teissier,
\newblock{\it Cl\^oture int\'egrale des id\'eaux et equisingularit\'e}, with an appendix by J.J. Risler.
\newblock Centre de Mathématiques, École Polytechnique (1974) and Ann. Fac. Sci. Toulouse Math. (6) {\bf 17}, No. 4 (2008), 781--859.


\bibitem{Matsumura} H.\,Matsumura,
\newblock Commutative ring theory,
\newblock Cambridge studies in advanced mathematics 8, Cambridge University Press (1986).

\bibitem{Montesinos} A.\,Montesinos-Amilibia,
\newblock G\'ermenes. Program available at www.uv.es/montesin





\bibitem{Ploski} A.\,P\l oski,
\newblock{\it Multiplicity and the \L ojasiewicz exponent},
\newblock Singularities (Warsaw, 1985), 353--364,
Banach Center Publ., 20, PWN, Warsaw, 1988.



\bibitem{Rees2} D.\,Rees,
\newblock{\it Generalizations of reductions and mixed multiplicities},
\newblock J. London Math. Soc. (2) {\bf  29}  (1984), 397--414.

\bibitem{RS} D.\,Rees and J.\,Sally,
\newblock{\it General elements and joint reductions},
\newblock Mich. Math. J. {\bf  35}, No. 2 (1988), 241--254.




\bibitem{S}  M.J.\,Saia,
\newblock{\it The integral closure of ideals and the Newton filtration},
\newblock J. Algebraic Geom. {\bf 5} (1996), 1--11.

\bibitem{Swanson} I.\,Swanson,
\newblock{\it Mixed multiplicities, joint reductions and quasi-unmixed local rings},
\newblock J. London Math. Soc. (2) {\bf  48}, No. 1 (1993), 1--14.

\bibitem{Cargese} B.\,Teissier,
\newblock{\it Cycles évanescents, sections planes et conditions of Whitney},
\newblock Singularit\'es \`a Carg\`ese, Ast\'erisque, No. 7--8 (1973),
285--362.



\bibitem{Tresonances} B.\,Teissier,
\newblock{\it Some resonances of {\L}ojasiewicz inequalities},
\newblock Wiad. Mat. {\bf 48}, No. 2 (2012), 271--284.

\bibitem{T2} B.\,Teissier,
\newblock{\it Monomial ideals, binomial ideals, polynomial ideals},
\newblock Trends in commutative algebra, Math. Sci. Res. Inst. Publ., {\bf 51}, 211--246. Cambridge Univ. Press, Cambridge, 2004.

\bibitem{V} W.\,Vasconcelos,
\newblock Integral closure. Rees algebras, multiplicities, algorithms.
\newblock Springer Monographs in Mathematics. Springer-Verlag, Berlin, 2005.

\end{thebibliography}
\end{document}